\documentclass[11pt,reqno]{amsart}
\usepackage{geometry}
\usepackage{xcolor}
\usepackage{amsthm}
\usepackage{amssymb}
\usepackage{hyperref}
\usepackage{arydshln}
\usepackage{mathrsfs}
 \usepackage{booktabs} 

\usepackage{bm}
\usepackage{bbm}
\usepackage{enumitem}  % Customisable lists
\usepackage{empheq}    % Boxed equations
\usepackage{microtype} % Better typography

\usepackage[utf8]{inputenc} % usually not needed (loaded by default)
\usepackage[T1]{fontenc}

\makeatletter
\numberwithin{equation}{section}
\numberwithin{figure}{section}

\newcommand{\Q}{\mathbb{Q}}
\newcommand{\C}{\mathbb{C}}
\newcommand{\R}{\mathbb{R}}

\newcommand{\F}{\mathcal{F}}

\renewcommand{\P}{\mathbb{P}}
\newcommand{\E}{\mathbb{E}}

\newtheorem{Theorem}{Theorem}[section]
\newtheorem{Proposition}[Theorem]{Proposition}\newtheorem{Corollary}[Theorem]{Corollary}\newtheorem{Lemma}[Theorem]{Lemma}\newtheorem{Remark}[Theorem]{Remark}\newtheorem{Definition}[Theorem]{Definition}

\numberwithin{equation}{section}

\makeatother

\begin{document}

\title[Boundary behaviour of the Volterra square-root process]{Boundary behaviour of the Volterra square-root process}

\author{Martin Friesen}
\author{Stefan Gerhold}
\author{Kristof Wiedermann}
\address[Martin Friesen]
{School of Mathematical Sciences
\\ Dublin City University
\\ Glasnevin, Dublin 9, Ireland}
\email{martin.friesen@dcu.ie}

\address[Stefan Gerhold]
{Financial and Actuarial Mathematics, TU Wien, Vienna, Austria}
\email{sgerhold@fam.tuwien.ac.at}

\address[Kristof Wiedermann]
{Financial and Actuarial Mathematics, TU Wien, Vienna, Austria}
\email{kristof.wiedermann@tuwien.ac.at}

\date{\today}

\subjclass[2010]{Primary 60H20; Secondary 45D05, 60K50}
% 60H20  Stochastic integral equations
% 45D05  Volterra integral equations
% 60K50  Anomalous diffusion models 

\keywords{rough volatility; Volterra square-root process; rough square-root process; Volterra Riccati equation; Feller condition; comparison principle}

\begin{abstract}
In this work, we study the boundary behaviour of the Volterra square-root process on $\R_+$. For regular Volterra kernels, we establish a time-dependent Feller condition that guarantees that the process does not hit zero on $[0,T]$, and prove finiteness of negative $p$-moments. For rough kernels that are regularly varying at zero, we show that the process necessarily hits zero with positive probability, and that its law has an atom at the boundary. Finally, we establish analogous results for the limit distribution. Our proofs are based on comparison principles for Volterra integral equations and generalized Riemann--Liouville fractional equations. The latter provide us with upper and lower bounds for the solution of the associated Volterra Riccati equation, and hence bounds on the Laplace transform. As an application, we study the structure of equivalent martingale measures in the Volterra Heston model. For the rough case, we show that equivalent martingale measures exist only under very restrictive assumptions on the drift under the real-world measure.
\end{abstract}

\maketitle

\allowdisplaybreaks

\section{Introduction}

\subsection{Overview}

Rough volatility models have emerged as a powerful framework in mathematical finance, motivated by the empirical observation that volatility is significantly rougher than predicted by models based on standard Brownian motion \cite{MR3805308}. By incorporating drivers with fractional memory, typically corresponding to a Hurst parameter $H \in (0,1/2)$, these models capture both the fine-scale behaviour of variance dynamics and fit the implied volatility surface, in particular the pronounced short-maturity skew, using only a few parameters. Beyond roughness, path dependence has also been argued to play an important role \cite{MR4960404, MR4625912, romer2022empirical}. From a mathematical perspective, roughness rules out the semimartingale property, while path dependence leads to non-Markovian processes as demonstrated in \cite{FGW25}.

Affine Volterra processes provide a flexible class of models in which roughness and path dependence can be naturally incorporated via convolution with a Volterra kernel, while analytical tractability is retained through the affine transform formula. In this work, we focus on the Volterra square-root diffusion process on $\R_+$, also known as the Volterra Cox-Ingersoll-Ross process, that models the variance process in the Volterra Heston model and constitutes the most prominent example of an affine Volterra process. Its dynamics is given by the stochastic Volterra equation 
\begin{align}\label{eq: VCIR}
    X_t = x_0 + \int_0^t K(t-s)(b + \beta X_s)\,\mathrm{d}s
    + \sigma \int_0^t K(t-s)\sqrt{X_s}\,\mathrm{d}B_s,
\end{align}
where $B$ is a standard Brownian motion, $b \geq 0$, $\beta \in \mathbb{R}$ governs the mean-reversion structure, and, when $\beta<0$, the quantity $-b/\beta$ is the long-term mean. Throughout this work, we assume $\sigma > 0$, and that the Volterra kernel $K$ satisfies the following condition:

\begin{enumerate}
    \item[(K)] $0 \not\equiv K \in L_{\mathrm{loc}}^2(\mathbb{R}_+)$ is completely monotone, and there exist $\gamma \in (0,1]$ and, for each $T>0$, a constant $C_T>0$ such that
    \begin{align}\label{eq: K1}
        \int_0^h K(t)^2\, \mathrm{d}t + \int_0^T |K(t+h)-K(t)|^2\, \mathrm{d}t \leq C_T h^{2\gamma},
        \qquad h > 0.
    \end{align}
\end{enumerate}
Under assumption (K), for any $b,x_0 \in \mathbb{R}_+$, $\sigma > 0$, and $\beta \in \mathbb{R}$, equation \eqref{eq: VCIR} admits a unique nonnegative weak solution with Hölder continuous sample paths of any order $\eta \in (0,\gamma)$ and finite moments of all orders. For additional details on the construction of this process we refer to \cite[Theorem 6.1]{MR4019885}.

Equation \eqref{eq: VCIR} covers the classical CIR process when $K(t) \equiv 1$. The fractional kernel $K(t) = \frac{t^{\alpha-1}}{\Gamma(\alpha)}$ with $\alpha = H + \frac{1}{2} \in (\frac{1}{2},1)$ corresponds to the rough CIR process, which constitutes the key component of the rough Heston model \cite{MR3778355,MR3905737}. Other kernels also appear in the literature. For instance, $K(t) = \sum_{j=1}^N c_j e^{-\lambda_j t}$ corresponds to an $N$-factor Markovian approximation \cite{MR3934104}, hyperbolic kernels $K(t) = c_H (\eta+t)^{H - \frac{1}{2}}$ with $H < \frac{1}{2}$ and $\eta > 0$ model hyper-rough volatility,  while $K(t) = \log(1+t^{-\alpha})$ provides a kernel that decays like $t^{-\alpha}$ for large times with small-time fluctuations governed on a logarithmic scale. 

In this work, we study the boundary behaviour of the Volterra square-root process, i.e., the hitting time of zero and finiteness of negative moments. For Markovian models, e.g. the CIR process with $K \equiv 1$, or more generally (multi-type CBI processes), such results are studied in terms of the Riccati equation and asymptotic behaviour of the branching and immigration mechanisms \cite{MR3264444, MR3263091, MR408016}. Extensions to multi-type CBI processes have been studied in~\cite{MR4195178}, and in~\cite{MR3849816} for processes with a countable number of types. As already established in \cite{MR408016}, the probability of an atom in zero is closely related to the maximal solution of the corresponding Riccati equation. To prove analogous results for the Volterra square-root process, we establish a similar relation for the maximal solution of a Volterra Riccati equation, and study its asymptotics.

At the same time as this work, a related result appeared in \cite{BJ26}. Besides optimal conditions for the finiteness of moments in the rough Bergomi model, the authors prove that the variance process in the rough Heston model (the rough Cox-Ingersoll-Ross process) has an atom in zero, and derive sufficient Feller boundary conditions for regular Volterra kernels in the Volterra square-root process. In the regular case, we obtain two-sided bounds and asymptotic optimality of the Feller condition, whereas \cite{BJ26} allows for a time-dependent $x_0$. For the rough case, the results from \cite{BJ26} apply to the fractional kernel $K(t)= t^{\alpha-1}/\Gamma(\alpha)$ and coincide with our lower bound \eqref{eq:glob l bd}, whereas our framework covers a general class of Volterra kernels that are regularly varying at zero, and provides upper bounds. Finally, we also discuss the boundary behaviour of the limit distribution.

\subsection{The Volterra Riccati equation}

The Volterra square-root process is a particular case of affine Volterra processes on the state space $\R_+$, see \cite[Section 6]{MR4019885}. Hence, its characteristic function can be expressed via an affine transformation formula. Below we state such a formula for the Fourier-Laplace transform of $\int_{[0,t]} X_{t-s}\,\mu(\mathrm{d}s)$, where $\mu$ is a locally finite measure with values in $\C_- := \{u \in \C : \mathrm{Re}(u) \leq 0\}$. Let
\[
    R(u) = \beta u + \frac{\sigma^2}{2}u^2.
\]
By Lemma \ref{lemma: fractional Sobolev}, we may apply the results of \cite[Section 3]{FJ22}, and hence there exists a unique solution $\psi(\cdot;\mu) \in L_{\mathrm{loc}}^2(\R_+; \C_-)$ of
\begin{align}\label{eq: Volterra Riccati measure}
    \psi(t;\mu) = \int_{[0,t]} K(t-s)\,\mu(\mathrm{d}s) + \int_0^t K(t-s)R(\psi(s;\mu))\, \mathrm{d}s.
\end{align}
Moreover, the process $X$ satisfies the affine transformation formula
\begin{align}\label{eq: affine trafo}
    \E\left[ \exp\left(\int_{[0,t]} X_{t-s}\,\mu(\mathrm{d}s)\right) \right]
    = \exp\left( x_0 \mu([0,t]) + \int_0^t R(\psi(s;\mu))\, \mathrm{d}s + b \int_0^t \psi(s;\mu)\, \mathrm{d}s \right).
\end{align}
The case $\mu(\mathrm{d}s) = u \delta_0(\mathrm{d}s) + f(s)\,\mathrm{d}s$ was covered in \cite[Section 6]{MR4019885}, whereas the choice $\mu(\mathrm{d}s) = \sum_{j=1}^n u_j \delta_{t_n - t_j}(\mathrm{d}s)$ with $u_1,\dots,u_n \in \C_-$ and $0 < t_1 < \dots < t_n$ yields the Fourier-Laplace transform for the finite-dimensional distributions of $(X_{t_1}, \dots, X_{t_n})$.

The pointwise regularity of $\psi$ is more subtle and less explored. In \cite[Section 3]{FJ22}, it was shown that $\psi(\cdot;\mu) - (K \ast \mu)$ is continuous on $(0,\infty)$, and sufficient conditions for continuity at $t=0$ and local Hölder continuity were also established. For an equation towards one-dimensional CBI processes with jumps we refer to~\cite{MR4674624}. Our first main result complements this by showing that $\psi$ is actually smooth on the open interval $(0,\infty)$ whenever $(K \ast \mu)(t) = \int_{[0,t]} K(t-s)\,\mu(\mathrm{d}s)$ is smooth on $(0,\infty)$.

\begin{Theorem}\label{Theorem: regularity}
    Suppose that $K$ is regularly varying at $t=0$, that is,
    \begin{align}\label{eq: regularly varying}
        K(t) = \frac{t^{\alpha-1}}{\Gamma(\alpha)}\ell(t), \qquad t > 0,
    \end{align}
    where $\ell > 0$ is slowly varying at $0$ and $\alpha \in (0,1)$. Let $m \geq 0$ and $\eta \in (0,1)$. Then
    \[
        K \ast \mu \in C^{m,\eta}((0,\infty); \C_-)
        \ \Longrightarrow \
        \psi(\cdot;\mu) \in C^{m,\eta}((0,\infty); \C_-).
    \]
    In particular, if $K \ast \mu$ is $C^\infty$, then also $\psi(\cdot;\mu) \in C^{\infty}$.
\end{Theorem}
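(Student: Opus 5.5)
We aim to prove the regularity by a bootstrap argument on $\psi = K\ast\mu + K \ast R(\psi)$, starting from the continuity result of \cite[Section 3]{FJ22}: $\psi - K\ast\mu$ is continuous on $(0,\infty)$, hence $\psi$ is continuous on $(0,\infty)$. Singularities of $\psi$ may still occur at $t=0$, since $\psi$ is only in $L^2_{\mathrm{loc}}$ there. We therefore fix $\varepsilon>0$ and split the nonlinear term at $\varepsilon$:
\begin{align*}
\psi(t) = (K\ast\mu)(t) + \int_0^{\varepsilon} K(t-s)R(\psi(s))\,\mathrm{d}s + \int_\varepsilon^t K(t-s)R(\psi(s))\,\mathrm{d}s =: g_\varepsilon(t) + \int_\varepsilon^t K(t-s)R(\psi(s))\,\mathrm{d}s,
\qquad t > \varepsilon .
\end{align*}
For $t\ge 2\varepsilon$ the middle integral involves $K$ only on $[t-\varepsilon,t]\subset[\varepsilon,\infty)$, where $K$ is completely monotone and hence $C^\infty$. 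Since $R(\psi)\in L^1(0,\varepsilon)$, differentiation under the integral shows that this term is $C^\infty$ on $(2\varepsilon,\infty)$. Consequently $g_\varepsilon\in C^{m,\eta}((2\varepsilon,\infty))$, and $\psi$ solves a Volterra equation on $[\varepsilon,\infty)$ with continuous, bounded data, up to a harmless transition region.

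A cleaner way to organize this is to write $\phi(t)=\psi(\varepsilon+t)$, which satisfies $\phi = h_\varepsilon + K\ast R(\phi)$ with $h_\varepsilon(t)=g_\varepsilon(\varepsilon+t)$ and $\phi$ continuous and bounded on compacts of $[0,\infty)$. We then work by induction on $m$.

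\textbf{Case $m=0$.} We need $K\ast f\in C^{0,\eta}$ for bounded continuous $f$ on $[0,T]$. With $f=R(\phi)$ bounded, we have
\[
|K\ast f(t+h)-K\ast f(t)|\le \|f\|_\infty\Big(\int_0^h K + \int_0^T|K(s+h)-K(s)|\,\mathrm{d}s\Big).
\]
Regular variation with $\alpha\in(0,1)$ together with monotonicity gives $\int_0^h K\lesssim h^{\alpha}\ell(h)$, which is $\le C h^{\alpha-\delta}$ by Potter bounds. The increment integral is bounded by $2\int_0^h K$ by monotonicity: it telescopes to $\int_0^h K - \int_T^{T+h}K$. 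This yields Hölder order close to $\alpha$, possibly smaller than $\eta$. We then bootstrap: if $f\in C^{0,\theta}$, we use the identity $K\ast f(t) = f(0)\int_0^t K + \int_0^t K(s)(f(t-s)-f(0)\dots)$. More robustly, $\frac{d}{dt}(K\ast f) = K(t)f(0) + K\ast f'$ whenever $f$ is differentiable. This suggests a better route: away from the origin, the function $t\mapsto \int_0^t K(s)\,\mathrm{d}s$ is smooth, so regularity is lost only at the initial point.

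\textbf{Key step (the main obstacle).} Singularities propagate: in $\phi$, the singular behaviour at $t=0$ of $K$ generates nonsmooth terms near $0$, but only near $0$. Since $\varepsilon$ is arbitrary, we only need regularity on $[\delta,\infty)$ for $\phi$. So we apply the splitting trick iteratively. Set $F=R(\phi)\in C^{0,\theta}$ on $[0,T]$. Then on $(\delta,T)$,
\[
(K\ast F)(t)=\int_0^{\delta/2}K(s)F(t-s)\,\mathrm{d}s+\int_{\delta/2}^t K(s)F(t-s)\,\mathrm{d}s .
\]
The second term is the convolution of the smooth function $K|_{[\delta/2,\infty)}$ with $L^1$ data, hence smooth in $t$, modulo the boundary term $K(t)F(0)$, which is smooth. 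The first term inherits the regularity of $F$ on $(\delta/2,T)$ exactly, because integrating translates of $F$ against a finite measure $K(s)\,\mathrm{d}s$ on $[0,\delta/2]$ preserves $C^{k,\theta}$ norms. Hence $\phi=h_\varepsilon+K\ast R(\phi)$ has the regularity of $\min(h_\varepsilon, R(\phi))$ on $(\delta,T)$, which alone is circular.

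To break the circularity we use smallness. On a short window the first term has operator norm $\int_0^{\delta/2}K\to0$. So we treat $\phi$ on $(\delta,T)$ as a fixed point of a contraction in $C^{k,\eta}$, with the remaining pieces as smooth forcing. The nonlinearity $R$ is a quadratic polynomial, so it maps $C^{k,\eta}$ to itself locally Lipschitz. Combined with a priori boundedness of $\phi$, this gives $\phi\in C^{k,\eta}$ for each $k\le m$ by induction, differentiating the equation once at a time.

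Translating back, $\psi\in C^{m,\eta}((\varepsilon+\delta,\infty))$ for all $\varepsilon,\delta>0$, which gives the claim. The delicate part is making the contraction-on-a-window argument rigorous with the nonlocal history. We expect the history part to be the hardest to handle, and we handle it as smooth forcing via the splitting of $K$ at $\delta/2$.
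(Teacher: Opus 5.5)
There is a genuine gap, located where you suspected: the contraction-on-a-window step is not merely incomplete, it cannot work.

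\emph{The window map is not a self-map.} For $t$ in a window $[a,a+w]$, the short-range term $\int_0^{\delta/2}K(s)R(\phi(t-s))\,\mathrm{d}s$ involves $\phi$ on $[a-\delta/2,a+w]$. So it does not map $C^{k,\eta}([a,a+w])$ into itself. You must prescribe $\phi$ on a left collar, and the output is $C^{k,\eta}$ only if $\phi$ is already $C^{k,\eta}$ there.

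\emph{Pushing left does not help.} Iterating to the left you reach the transition region of $h_\varepsilon$ (equivalently $t$ near $0$ for $\phi$), where nothing is known. There $K\ast$ does not even preserve $C^{k,\eta}$: $(K\ast F)'=K(t)F(0)+K\ast F'$ blows up at $0$ unless $F(0)=0$. Likewise $\int_0^tK(s)\,\mathrm{d}s\sim t^{\alpha}\ell(t)/\Gamma(\alpha+1)$ is not $C^{\eta}$ at $0$ for $\eta>\alpha$.

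\emph{Smallness cannot suffice.} Smallness of $\int_0^{\delta/2}K$ is the only property of the short-range part your argument uses. Replace that part by a small point mass and consider $\phi(t)=G(t)+\epsilon\,\phi(t-r)\mathbbm{1}_{\{t\geq r\}}$ with $r<\delta/2$, where $G$ is smooth except for a kink at some $t_0\in(0,\delta)$. Then $\phi(t)=\sum_{0\le jr\le t}\epsilon^jG(t-jr)$ has kinks at $t_0+jr$ for every $j$. The singularity propagates to all later times even though the relevant operator norm is only $\epsilon$.

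\emph{A secondary error.} The term $\int_{\delta/2}^tK(s)F(t-s)\,\mathrm{d}s=\int_0^{t-\delta/2}K(t-u)F(u)\,\mathrm{d}u$ is not smooth, because its derivative contains $K(\delta/2)F(t-\delta/2)$. It gains only one derivative; a smooth cutoff of $K$ away from $0$ would repair this part.

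What is missing is a genuine \emph{smoothing} property of the singular part of $K$, and this is how the paper argues.
\begin{itemize}
\item Complete monotonicity and regular variation give, via the monotone density theorem and Potter bounds, $|K^{(j)}(t)|\le C_jt^{\rho-1-j}$ on $(0,T]$ for any $\rho<\alpha$.
\item With these bounds, convolution with $K$ maps $f\in C^{k,\theta}([0,T])$ with $f(0)=\dots=f^{(k)}(0)=0$ into $C^{k,\theta+\rho}$ (resp.\ $C^{k+1,\theta+\rho-1}$).
\item This localizes with no information needed to the left. On $[\delta,T]$, split $\mathcal{K}g$ at $s=\delta/2$. The part over $s<\delta/2$ is $C^\infty$; this is your correct first observation.
\item On $[\delta/2,t]$, subtract the Taylor polynomial of $g$ at $\delta/2$. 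Its convolution with $K$ is smooth away from $\delta/2$, and the smoothing estimate applies to the remainder, which vanishes to order $k$ there.
\item Hence $R(\psi)\in C^{k,\theta}_{\mathrm{loc}}((0,\infty))$ implies that $\mathcal{K}R(\psi)$ gains $\rho$ locally.
\item Starting from continuity of $\psi$ on $(0,\infty)$, finitely many iterations push the regularity of $\mathcal{K}R(\psi)$ beyond $m+\eta$. Then $\psi=K\ast\mu+\mathcal{K}R(\psi)$ has exactly the regularity of $K\ast\mu$.
\end{itemize}
Your $m=0$ estimate is essentially the first step of this bootstrap. Your identity $\frac{\mathrm{d}}{\mathrm{d}t}(K\ast f)=K(t)f(0)+K\ast f'$ points toward the Taylor subtraction. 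However, the contraction you substituted for it does not deliver the regularity gain.
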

The proof of this theorem is given in Section 2. It is based on the smoothing property of the convolution operator combined with a bootstrapping argument. Let us remark that for smooth kernels $K$, direct differentiation of the convolutions $\int_{[0,t]}K(t-s)\, \mu(\mathrm{d}s), \int_0^t K(t-s)R(\psi(s;\mu))\, \mathrm{d}s$ shows that regularity in fact holds on the entire time domain $\R_+$. Thus, the treatment of weakly singular kernels constitutes the main contribution of Theorem~\ref{Theorem: regularity}, and subsequently allows us to verify the regularity assumptions in the comparison Theorem~\ref{thm:strict} from the appendix.

\subsection{Regular case}

A fundamental question in the study of affine processes on $\R_+$ concerns their boundary behaviour and the smoothness of their transition densities. For the classical CIR process (that is, when $K \equiv 1$), it is well known that $X$ does not hit the boundary if and only if the Feller boundary condition $\frac{2b}{\sigma^2} \geq 1$ is satisfied. Moreover, since the CIR process has a non-central $\chi$-squared distribution with $\frac{4b}{\sigma^2}$ degrees of freedom, one can verify that $\E[X_t^{-p}] < \infty$ is equivalent to $2b/\sigma^2 > p$. Below, we provide an extension of these results to the Volterra square-root process with $K \neq 1$.

Below we recall the definition of the resolvent of the first kind, which provides the key tool to capture the memory structure of the process. A locally finite Borel measure $L$ on $\R_+$ is called resolvent of the first kind associated with the Volterra kernel $K$, if it satisfies $L \ast K = K \ast L = 1$. Since $K$ is completely monotone and not identically zero, such a measure always exists \cite[Chapter 5, Theorem 5.4]{grippenberg}. With the convention $\infty^{-1} := 0$, it is given by
\begin{align}\label{eq: resolvent first kind}
    L(\mathrm{d}t) = K(0)^{-1}\delta_0(\mathrm{d}t) + L_0(t)\,\mathrm{d}t,
\end{align}
where $L_0 \in L_{\mathrm{loc}}^1(\R_+)$ is completely monotone. It follows from Lemma \ref{lemma: first kind asymptotics} that $L_0(0) = |K'(0)|/K(0)^2 < \infty$ whenever $K \in C^1(\R_+)$. To formulate our main result on the boundary behaviour for regular kernels $K \in C^1(\R_+)$, let us define $\lambda = \beta - L_0(0)$, and set
\[
        C(t) = \frac{2x_0}{\sigma^2 K(0)^2} \cdot \begin{cases}\frac{\lambda \mathrm{e}^{\lambda t}}{\mathrm{e}^{\lambda t} - 1},& \lambda \neq 0 \\ \frac{1}{t},&  \lambda = 0 \end{cases} \ \text{ and } \ \theta(t) = \frac{\sigma^2 K(0)^2}{2}\cdot \begin{cases} \frac{\mathrm{e}^{\lambda t} - 1}{\lambda},&  \lambda \neq 0 \\ t,&  \lambda = 0. \end{cases}
\]
The following is our main result on the boundary behaviour for regular Volterra kernels.

\begin{Theorem}[Boundary]\label{thm: boundary regular case}
    Suppose that $K \in C^1(\R_+)$, and let 
    \[
        \gamma_-(t) = \frac{2(x_0 L_0(t) + b)}{\sigma^2 K(0)} \ \text{ and } \ \gamma_+(t) = \frac{2\Lambda(t)}{\sigma^2 K(0)},
    \]
    with $\Lambda$ given by
    \[
        \Lambda(t) = x_0 L_0(0)\mathrm{e}^{\beta^+ K(0) t} + b \begin{cases} 1 + L_0(0) \frac{\mathrm{e}^{\beta K(0)t} - 1}{\beta},&  \beta \geq L_0(t)
        \\  \frac{L_0(0) - \beta}{L_0(t) - \beta},& \beta < L_0(t). \end{cases}
    \]
    Then $\P[X_t = 0] = 0$ for all $t > 0$. Moreover, let $p > 0$ and $t > 0$ satisfy $\gamma_-(t) > p$. If $x_0 = 0$, then 
   \[
     \frac{1}{\theta(t)^{p}} \frac{\Gamma(\gamma_{+}(t) - p)}{\Gamma(\gamma_{+}(t))} \leq \E[X_t^{-p}] \leq \frac{\Gamma(\gamma_{-}(t) - p)}{\Gamma(\gamma_{-}(t))} \frac{1}{\theta(t)^{p}},
    \]
    and if $x_0 > 0$, then 
    \[
        \frac{1}{\theta(t)^{p}} \mathrm{e}^{-C(t)} \frac{\Gamma(\gamma_+(t)-p)}{\Gamma(\gamma_+(t))} \leq \E[X_t^{-p}] \leq \frac{1}{\theta(t)^{p}} \left[ \left( \frac{C(t)}{2} \right)^{-p} + \mathrm{e}^{-C(t)/2} \frac{\Gamma(\gamma_-(t) - p)}{\Gamma(\gamma_-(t))} \right].
    \]
    If $p \geq \gamma_+(t)$, then $\E[X_t^{-p}] = \infty$ for any $x_0 \geq 0$.
\end{Theorem}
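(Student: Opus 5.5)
Our approach goes through the Laplace transform and the Volterra Riccati equation. For $u \le 0$ we take $\mu = u\delta_0$ in \eqref{eq: Volterra Riccati measure}, so that $\psi(t;u) = uK(t) + \int_0^t K(t-s)R(\psi(s;u))\,\mathrm{d}s$. By \eqref{eq: affine trafo},
\begin{align*}
\E[\mathrm{e}^{uX_t}] = \exp\Big(x_0 u + \int_0^t R(\psi(s;u))\,\mathrm{d}s + b\int_0^t \psi(s;u)\,\mathrm{d}s\Big).
\end{align*}
Convolving the Riccati equation with the resolvent of the first kind $L$ from \eqref{eq: resolvent first kind} and using $L\ast K = 1$ gives
\begin{align*}
x_0 u + \int_0^t R(\psi)\,\mathrm{d}s = x_0 (L\ast\psi)(t) = \tfrac{x_0}{K(0)}\psi(t) + x_0 (L_0\ast \psi)(t).
\end{align*}
Hence the whole problem reduces to two-sided bounds on $\psi(t;u)$ for $u\to-\infty$. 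Negative moments then follow from
\begin{align*}
\E[X_t^{-p}] = \Gamma(p)^{-1}\int_0^\infty v^{p-1}\E[\mathrm{e}^{-vX_t}]\,\mathrm{d}v .
\end{align*}
The target bounds have the form of the CIR-type Laplace transform $(1+\theta v)^{-\gamma}\exp(-Cv/(1+\theta v)\cdot\ldots)$. Integrating this against $v^{p-1}$ produces the ratios $\Gamma(\gamma-p)/\Gamma(\gamma)\,\theta^{-p}$ in the statement.

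The key step is a comparison principle. Since $K\in C^1(\R_+)$, we differentiate the Riccati equation; Theorem~\ref{Theorem: regularity}, or directly the smoothness of $K$, justifies this. Equivalently, we rewrite it via $L$ as an ODE-like relation
\begin{align*}
\psi'(t) = K(0)R(\psi(t)) + \text{(memory term involving } L_0 \text{ and } K'),
\end{align*}
with $\psi(0)=uK(0)$. Complete monotonicity of $K$ and $L_0$ gives signs and monotonicity of the memory term. Bounding $L_0(s)$ between $L_0(t)$ and $L_0(0)$ on $[0,t]$ then yields sub- and supersolutions solving classical Riccati ODEs of the form $y' = K(0)(\lambda y + \tfrac{\sigma^2K(0)}{2}y^2)$. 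These have explicit solutions $y(t) = \frac{uK(0)\mathrm{e}^{\lambda t}}{1-u\theta(t)/K(0)\cdot K(0)}$-type expressions, which is exactly where $\theta(t)$ and $C(t)$ enter. The comparison Theorem~\ref{thm:strict} from the appendix then sandwiches $\psi$. Plugging these bounds into $b\int\psi$ and into $x_0(L_0\ast\psi)$ gives the logarithmic terms $-\gamma_\pm(t)\log(1+\theta(t)v)$ and the $\mathrm{e}^{-C(t)}$ factors. The case distinction $\beta\gtrless L_0(t)$ in $\Lambda$ reflects whether the effective linear coefficient $\beta - L_0(s)$ changes sign on $[0,t]$. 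I expect this step — getting sharp enough sub/supersolutions with the right constants, including the growth factors $\mathrm{e}^{\beta^+K(0)t}$ — to be the main obstacle. I would first try to establish monotonicity of $s\mapsto \psi(s;u)$ in $u$ and sign properties via a generalized Riemann--Liouville comparison.

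The remaining conclusions then follow. For the upper bound with $x_0>0$, we split $\{X_t\ge C/(2\theta)\}$-type contributions: in the Laplace integral we separate the exponential factor, bounding $\mathrm{e}^{-Cv/(1+\theta v)}$ by $\mathrm{e}^{-C/2}$ for $v\ge 1/\theta$, and handle small $v$ crudely. This produces the $(C/2)^{-p}$ term. $\P[X_t=0]=\lim_{v\to\infty}\E[\mathrm{e}^{-vX_t}]$ vanishes since the upper bound behaves like $v^{-\gamma_-}\to 0$; when $\gamma_-(t)=0$ (i.e.\ $x_0=b=0$) the process is trivially zero, so that case must be excluded or is degenerate. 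Finally, if $p\ge\gamma_+(t)$, the lower bound $\E[\mathrm{e}^{-vX_t}]\gtrsim v^{-\gamma_+}$ for large $v$ makes $\int^\infty v^{p-1-\gamma_+}\,\mathrm{d}v$ diverge, giving $\E[X_t^{-p}]=\infty$.
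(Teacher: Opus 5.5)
Your framework matches the paper's. With $V_u=-\psi(\cdot;-u\delta_0)$, the exponent is $\Phi_u(t)=x_0(L\ast V_u)(t)+b\int_0^tV_u(s)\,\mathrm{d}s$. Differentiating $K(0)^{-1}V_u+L_0\ast V_u=u+\int_0^t(\beta V_u-\frac{\sigma^2}{2}V_u^2)\,\mathrm{d}s$ and dropping $-K(0)\int_0^tV_u(s)L_0'(t-s)\,\mathrm{d}s\ge0$ gives $V_u\ge W_u$ by ODE comparison. With $L_0(t-s)\ge L_0(t)$ this yields the upper bound on $\E[\mathrm{e}^{-uX_t}]$ with exponent $\gamma_-(t)$. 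The Gamma integrals and the split at $u\theta(t)=1$ are also as in the paper.

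The gap is the reverse inequality: an upper bound on $V_u$ and on $L\ast V_u$. This is what produces $\gamma_+(t)$, the lower moment bounds, and $\E[X_t^{-p}]=\infty$ for $p\ge\gamma_+(t)$. You leave it as ``the main obstacle'', and the mechanism you suggest fails.

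Let $Y$ solve $Y'=K(0)(aY-\frac{\sigma^2}{2}Y^2)$ with $Y(0)=K(0)u$. Then $D_LY-\beta Y+\frac{\sigma^2}{2}Y^2=(a-\beta+L_0(0))Y(t)+\int_0^tL_0'(t-s)Y(s)\,\mathrm{d}s\le(a-\beta+L_0(0))Y(t)+L_0'(t)\int_0^tY(s)\,\mathrm{d}s$. As $u\to\infty$, $Y(t)$ stays bounded while $\int_0^tY\sim\frac{2}{\sigma^2K(0)}\log u$. So no such $Y$ is a supersolution for large $u$ unless $L_0$ is constant (exponential $K$), and Theorem \ref{thm:strict} cannot be applied. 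That theorem is, in any case, stated for kernels without the atom $K(0)^{-1}\delta_0$.

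The integrated form does not help either. It gives only $K(0)^{-1}V_u(t)\le u+\int_0^t\bigl((\beta-L_0(t))V_u-\frac{\sigma^2}{2}V_u^2\bigr)\,\mathrm{d}s$, which cannot be closed by comparison because $y\mapsto ay-\frac{\sigma^2}{2}y^2$ is not monotone. Discarding the quadratic term leaves $u\,\mathrm{e}^{\beta^+K(0)t}$, which is useless as $u\to\infty$. Moreover, any independent supersolution would carry its own $\bar\theta,\bar C$, whereas the statement has the same $\theta(t),C(t)$ on both sides.

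The missing idea is to bound $R_u=L\ast V_u$ directly and feed the already proven lower bound into the quadratic term:
\[
R_u'=\beta V_u-\tfrac{\sigma^2}{2}V_u^2\le\beta V_u-\tfrac{\sigma^2}{2}W_u^2 .
\]
The linear term is handled by $\beta V_u\le\beta W_u$ if $\beta\le0$. If $\beta>0$, use $\beta V_u\le\beta K(0)R_u$ (from $L_0\ge0$) plus Gronwall. The Riccati equation for $W_u$, $\frac{\sigma^2}{2}W_u^2=(\beta-L_0(0))W_u-W_u'/K(0)$, then turns $\int W_u^2$ into boundary terms plus $\int W_u$. This gives $R_u(t)\le W_u(t)/K(0)+L_0(0)\int_0^tW_u(s)\mathrm{e}^{\beta^+K(0)(t-s)}\,\mathrm{d}s$, and via $V_u\le K(0)R_u$ a bound on $\int_0^tV_u$.

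When $\beta<L_0(t)$, the same trick applied to $K(0)^{-1}V_u(t)+L_0(t)\int_0^tV_u\le R_u(t)$ gives $\int_0^tV_u\le\frac{L_0(0)-\beta}{L_0(t)-\beta}\int_0^tW_u$. The case split in $\Lambda$ is simply whether this division is allowed, not a sign-change effect. With $\int_0^tW_u=\frac{2}{\sigma^2K(0)}\log(1+u\theta(t))$ one gets $\Phi_u(t)\le\frac{x_0}{K(0)}W_u(t)+\gamma_+(t)\log(1+u\theta(t))$. This is the needed lower bound on the Laplace transform; compare Theorem \ref{Theorem: lower bound}.

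A minor point on your degenerate case: $\gamma_-(t)=0$ is not equivalent to $x_0=b=0$. It also occurs for $b=0$, $x_0>0$ and $K$ constant (drift-free CIR), where $\P[X_t=0]>0$. So the claim $\P[X_t=0]=0$ really needs $x_0L_0(t)+b>0$.
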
 

The upper bounds provide finiteness of negative moments (and hence absence of an atom in zero), under the sufficient condition
\begin{align}\label{eq: Volterra Feller boundary condition}
    \gamma_-(t) > p,
\end{align}
which has the following interpretation. If $x_0 = 0$, then $\gamma_-(t) = \frac{2b}{\sigma^2 K(0)}$ and \eqref{eq: Volterra Feller boundary condition} reduces to the classical Feller condition under the scaling $b \longmapsto K(0)b$ and $\sigma \longmapsto K(0)\sigma$. Such scaling appears, e.g., in the semimartingale representation of the process (see, e.g., Section 4). However, when $x_0 > 0$, and $K \neq 1$ (hence $L_0 \neq 0$), \eqref{eq: Volterra Feller boundary condition} incorporates the memory effect of the process through the time-dependent effective drift $t \longmapsto x_0 L_0(t) + b$, which increases the number of finite negative moments in the case $x_0 > 0$. By complete monotonicity of $L_0$, we obtain  
\[
    \gamma_-(\infty) = \frac{2(x_0 L_0(\infty) + b)}{\sigma^2 K(0)} \leq \frac{2(x_0 L_0(0) + b)}{\sigma^2 K(0)} = \gamma_-(0),
\]
where according to Lemma \ref{lemma: first kind asymptotics} the evaluations $L_0(\infty)$ and $L_0(0)$ are given by
\begin{align}\label{eq: 7}
    L_0(0) = \frac{|K'(0)|}{K(0)^2} \qquad\text{and}\qquad
    L_0(\infty) = \left( \int_0^\infty K(t)\,\mathrm{d}t \right)^{-1}.
\end{align}
\begin{Remark}
    Assume that $x_0 > 0$. Then the following cases hold:
\begin{enumerate}
    \item[(i)] If $\gamma_-(\infty) \leq p < \gamma_-(0)$, then \eqref{eq: Volterra Feller boundary condition} holds for all $t \in (0,t_0)$, where $t_0 \in (0,\infty]$ solves the equation $\gamma_-(t_0) = p$.
    \item[(ii)] Condition \eqref{eq: Volterra Feller boundary condition} holds for all $t>0$, provided that $\gamma_-(\infty) > p$.
\end{enumerate}
Finally, since $\lim_{t \searrow 0} \theta(t)^{-1}C(t)^{-1} = x_0^{-1} > 0$, and $\lim_{t \searrow 0} \mathrm{e}^{-\frac{C(t)}{2}} \theta(t)^{-p} = 0$, it follows that $\sup_{t\in[0,T]} \E[X_t^{-p}] < \infty$ whenever \eqref{eq: Volterra Feller boundary condition} holds on $[0,T]$. 
\end{Remark}

The lower bounds in Theorem \ref{thm: boundary regular case} provide a necessary condition for the finiteness of negative moments. Indeed, if $\gamma_+(t) \leq p$, then the lower bounds are not convergent which implies that $\E[X_t^{-p}] = \infty$. Therefore, a necessary condition for the existence of a negative moment of order $p$ is $\gamma_+(t) > p$. Our next corollary provides a characterisation of finite negative moments on intervals $(0,t_0)$ with $t_0 > 0$. 
\begin{Corollary}
    Suppose that $K \in C^1(\R_+)$ and let $p > 0$. Then there exists $t_0 > 0$ such that
    \[
        \E[X_t^{-p}] < \infty \ \forall t \in (0,t_0) \ \Longleftrightarrow \ \frac{2(x_0 L_0(0) + b)}{\sigma^2 K(0)} > p.
    \]
\end{Corollary}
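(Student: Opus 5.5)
The corollary follows from Theorem \ref{thm: boundary regular case}: I will show that $\gamma_-$ and $\gamma_+$ both converge, as $t \searrow 0$, to the quantity $\gamma_0 := \frac{2(x_0 L_0(0) + b)}{\sigma^2 K(0)}$. Then I use the upper bound for sufficiency and the divergence statement for necessity.

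\emph{Limits at zero.} Since $K \in C^1(\R_+)$, Lemma \ref{lemma: first kind asymptotics} gives $L_0(0) = |K'(0)|/K(0)^2 < \infty$. Because $L_0$ is completely monotone, it is continuous and nonincreasing on $[0,\infty)$ once $L_0(0)$ is finite, so $L_0(t) \to L_0(0)$. Hence $\gamma_-(t) \to \gamma_0$ and $\gamma_-(t) \le \gamma_0$. For $\gamma_+$ I look at $\Lambda(t)$ as $t \searrow 0$. The term $x_0 L_0(0)\mathrm{e}^{\beta^+K(0)t}$ tends to $x_0 L_0(0)$. For the $b$-term there are two cases.
\begin{itemize}
\item If $\beta \ge L_0(t)$ for small $t$, the term is $1 + L_0(0)(\mathrm{e}^{\beta K(0)t}-1)/\beta \to 1$; this includes $\beta = 0$, read as a limit.
\item If $\beta < L_0(t)$ for small $t$, which is automatic when $\beta < L_0(0)$ by continuity, the term is $(L_0(0)-\beta)/(L_0(t)-\beta) \to 1$.
\end{itemize}
The boundary case $\beta = L_0(0)$ falls under the first case, since $L_0(t) \le L_0(0) = \beta$. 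In every case $\Lambda(t) \to x_0 L_0(0) + b$, so $\gamma_+(t) \to \gamma_0$.

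\emph{Sufficiency.} Suppose $\gamma_0 > p$. By continuity of $\gamma_-$ at zero there is $t_0 > 0$ with $\gamma_-(t) > p$ for all $t \in (0,t_0)$. The upper bounds of Theorem \ref{thm: boundary regular case}, for $x_0 = 0$ or $x_0 > 0$, are then finite for each such $t$. Here $\theta(t) > 0$ and $C(t) > 0$, and the Gamma ratio $\Gamma(\gamma_-(t) - p)/\Gamma(\gamma_-(t))$ is finite because its arguments are positive. So $\E[X_t^{-p}] < \infty$ on $(0,t_0)$.

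\emph{Necessity.} Suppose $\gamma_0 \le p$; I need that for every $t_0 > 0$ some $t \in (0,t_0)$ has $\E[X_t^{-p}] = \infty$. The last assertion of the theorem gives infinite moments whenever $\gamma_+(t) \le p$. If $\gamma_0 < p$, continuity of $\gamma_+$ makes $\gamma_+(t) < p$ for all small $t$, and we are done.

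The main obstacle is the borderline $\gamma_0 = p$, where $\gamma_+(t)$ might approach $p$ from above. I would first try to show $\Lambda(t) \le x_0 L_0(0) + b$ for small $t$, which gives $\gamma_+(t) \le \gamma_0 = p$; this seems false in general. For example, $\mathrm{e}^{\beta^+K(0)t} > 1$ when $\beta > 0$ and $x_0 > 0$, and the ratio $(L_0(0)-\beta)/(L_0(t)-\beta) \ge 1$. The fallback is the lower bound of Theorem \ref{thm: boundary regular case}, $\E[X_t^{-p}] \ge \theta(t)^{-p}\mathrm{e}^{-C(t)}\Gamma(\gamma_+(t)-p)/\Gamma(\gamma_+(t))$, which is valid whenever $\gamma_-(t) > p$. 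At the borderline $\gamma_-(t) \le \gamma_0 = p$ always holds, so this bound cannot be invoked there, and only the case $\gamma_+(t) \le p$ is available from the theorem. The borderline would therefore need either monotonicity of $\gamma_+$ from below near $0$, or a sharpening of the comparison argument behind Theorem \ref{thm: boundary regular case} that uses $\Lambda(t) \to x_0L_0(0)+b$ directly. I expect settling $\gamma_0 = p$ to be the only delicate point; the rest is continuity at $t = 0$.
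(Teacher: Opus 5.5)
Your route is the paper's. Its proof is the observation $\gamma_-(0^+)=\gamma_+(0^+)=\gamma_0$ followed by continuity of $\gamma_\pm$. Your computation $\Lambda(t)\to x_0L_0(0)+b$, the sufficiency step via the upper bounds of Theorem~\ref{thm: boundary regular case}, and the necessity step for $\gamma_0<p$ are all correct.

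The borderline $p=\gamma_0$ that you single out is a genuine gap, and the paper's one-line proof passes over the same gap. One always has $\gamma_-(t)\le\gamma_0\le\gamma_+(t)$. Moreover $\gamma_+(t)>\gamma_0$ for every $t>0$ as soon as, e.g., $b>0$ and $L_0$ is nonconstant. In that case $p=\gamma_0$ lies in the undecided window $[\gamma_-(t),\gamma_+(t))$, and no continuity argument at $t=0$ can produce $\E[X_t^{-p}]=\infty$. A small correction: the lower bound \eqref{eq:lb X^-p} is derived without using $\gamma_-(t)>p$, so it does hold at the borderline. It is just finite whenever $p<\gamma_+(t)$, so it cannot help.

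In fact this case cannot be rescued in general, because the forward implication fails at $p=\gamma_0$. Take $x_0=0$, $b>0$ and $K$ not of the form $c\,\mathrm{e}^{-\lambda t}$, e.g.\ $K(t)=(1+t)^{-1}$. Then $L_0$ is nonconstant, so $-L_0'$ is strictly positive and nonincreasing.

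\begin{enumerate}
\item The proof of Theorem~\ref{Theorem: lower bound} gives $V_u'=K(0)(\beta-L_0(0))V_u-\tfrac{\sigma^2K(0)}{2}V_u^2+g_u$, where $g_u(s)=K(0)\int_0^sV_u(r)|L_0'(s-r)|\,\mathrm{d}r\ge K(0)|L_0'(t)|\int_0^sW_u(r)\,\mathrm{d}r$. The right-hand side is $\ge c\log u$ for $s\in[u^{-1/2},t]$.
\item Since $V_u(u^{-1/2})\ge W_u(u^{-1/2})\asymp\sqrt{u}$, ODE comparison keeps $V_u$ above the positive root of $K(0)(\beta-L_0(0))z-\tfrac{\sigma^2K(0)}{2}z^2+c\log u$ on all of $[u^{-1/2},t]$. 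That root is $\asymp\sqrt{\log u}$.
\item Meanwhile $W_u$ is bounded on $[t/2,t]$ uniformly in $u$. Hence $\int_0^tV_u\ge\frac{2}{\sigma^2K(0)}\log(1+u\theta(t))+c'\sqrt{\log u}-C$.
\item Since $x_0=0$, this means $\E[\mathrm{e}^{-uX_t}]\le C''(1+u\theta(t))^{-\gamma_0}\mathrm{e}^{-bc'\sqrt{\log u}}$. As $\int^\infty u^{-1}\mathrm{e}^{-bc'\sqrt{\log u}}\,\mathrm{d}u<\infty$, one gets $\E[X_t^{-\gamma_0}]<\infty$ for every $t>0$, although $\gamma_0>p$ fails.
\end{enumerate}

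For exponential $K$, by contrast, $X$ is a classical CIR process with Feller index $\gamma_0$, and the borderline moment does diverge. So your argument, like the paper's, proves the equivalence only for $p\neq\gamma_0$. The case $p=\gamma_0$ must be excluded from the statement or handled by a finer analysis of $V_u$.
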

\begin{proof}
    Since $\gamma_+(0) = \gamma_-(0) = \frac{2(x_0 L_0(0) + b)}{\sigma^2 K(0)}$, our necessary and sufficient conditions coincide at $t = 0$. The assertion now follows from the continuity of $t \longmapsto \gamma_{\pm}(t)$.
\end{proof} 

It is worth noting that while the equivalence holds on $(0,t_0)$, our results do not cover $p \in [\gamma_-(t), \gamma_+(t))$ for a fixed $t > 0$, leaving a memory-induced gap in characterizing negative moments. This is a natural consequence of the fact that Volterra models can be viewed as higher- or infinite-dimensional Markovian models \cite{MR3934104, FGW25}, where similar analytical gaps are a known phenomenon (see \cite[Appendix B]{BP24}).

Our last result for the regular case concerns the hitting time of zero for the process with a regular Volterra kernel, and complements the recent results of \cite{BP24}, where this problem was studied for a general class of stochastic Volterra processes with regular Volterra kernels.

\begin{Theorem}[Hitting zero]\label{thm: hitting zero regular case}
    Suppose that $K \in C^1(\R_+)$, and let $T \in (0,\infty]$ satisfy 
    \[
        \gamma_-(T) = \frac{2\bigl(b + x_0L_0(T)\bigr)}{\sigma^2 K(0)} \geq 1.
    \]
    Let $\tau_0 = \inf\{ t > 0 : X_t = 0\}$ denote the hitting time of zero. Then $\P[\tau_0 \geq T] = 1$.
\end{Theorem}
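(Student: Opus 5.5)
The plan is to use the semimartingale representation of $X$ available for regular kernels $K \in C^1(\R_+)$, and then compare $X$ with a classical CIR process whose drift is bounded below by the time-dependent effective drift $x_0 L_0(t) + b$.

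\textbf{Step 1: semimartingale form.} Since $K \in C^1$ with $K(0) \in (0,\infty)$, differentiating \eqref{eq: VCIR} shows that $X$ is a continuous semimartingale:
\[
    \mathrm{d}X_t = \Bigl( K(0)(b+\beta X_t) + \int_0^t K'(t-s)(b+\beta X_s)\,\mathrm{d}s + \sigma\int_0^t K'(t-s)\sqrt{X_s}\,\mathrm{d}B_s \Bigr)\mathrm{d}t + \sigma K(0)\sqrt{X_t}\,\mathrm{d}B_t .
\]
It is more useful to invert the convolution with the resolvent of the first kind $L$ from \eqref{eq: resolvent first kind}. Set $Z_t = \int_0^t (b+\beta X_s)\,\mathrm{d}s + \sigma\int_0^t \sqrt{X_s}\,\mathrm{d}B_s$, so that $X - x_0 = K \ast \mathrm{d}Z$. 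Convolving with $L$ and using $L \ast K = 1$ gives $L \ast (X - x_0) = Z$. Writing this out with $L = K(0)^{-1}\delta_0 + L_0$ yields
\[
    \frac{X_t - x_0}{K(0)} + \int_0^t L_0(t-s)(X_s - x_0)\,\mathrm{d}s = Z_t .
\]
Differentiating, we obtain
\[
    \mathrm{d}X_t = K(0)\Bigl( b + \beta X_t + x_0 L_0(t) - L_0(0) X_t - \int_0^t L_0'(t-s) X_s\,\mathrm{d}s \Bigr)\mathrm{d}t + \sigma K(0)\sqrt{X_t}\,\mathrm{d}B_t .
\]
Here we used $\frac{\mathrm{d}}{\mathrm{d}t}\int_0^t L_0(t-s)X_s\,\mathrm{d}s = L_0(0)X_t + \int_0^t L_0'(t-s)X_s\,\mathrm{d}s$. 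Lemma \ref{lemma: first kind asymptotics} guarantees $L_0(0) < \infty$. Since $L_0$ is completely monotone, $L_0' \le 0$, so $-\int_0^t L_0'(t-s)X_s\,\mathrm{d}s \ge 0$ because $X \ge 0$. Hence the drift is bounded below by
\[
    K(0)\bigl( x_0 L_0(t) + b + \lambda X_t \bigr), \qquad \lambda = \beta - L_0(0).
\]
If $L_0'$ is not integrable at $0$, I will instead regularize by differentiating $L_0^\varepsilon = L_0(\cdot+\varepsilon)$ and pass to the limit.

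\textbf{Step 2: comparison.} Let $Y$ solve the time-inhomogeneous CIR equation
\[
    \mathrm{d}Y_t = K(0)\bigl( x_0 L_0(t) + b + \lambda Y_t \bigr)\mathrm{d}t + \sigma K(0)\sqrt{Y_t}\,\mathrm{d}B_t, \qquad Y_0 = x_0,
\]
driven by the same Brownian motion. The diffusion coefficient $\sqrt{y}$ is $1/2$-Hölder, and $X$ has drift at least as large as that of $Y$ at equal states. The Yamada--Watanabe comparison argument (local time of $Y - X$ at zero vanishes, followed by Gronwall on $\E[(Y_t - X_t)^+]$) then gives $X_t \ge Y_t$ for all $t$ almost surely. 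The extra drift term of $X$ depends on the path, but it enters only as an additional adapted nonnegative drift, which is harmless in this argument.

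\textbf{Step 3: Feller for $Y$ on $[0,T)$.} The effective drift $x_0 L_0(t) + b$ is nonincreasing, so on $[0,T]$ it is at least $x_0 L_0(T) + b$. The Feller condition $\gamma_-(T) \ge 1$ translates, for the rescaled process $Y$, into
\[
    2K(0)\bigl( x_0 L_0(T) + b \bigr) \ge \sigma^2 K(0)^2 .
\]
For $T = \infty$ we use $L_0(\infty) = \lim_{t\to\infty} L_0(t)$. Comparing $Y$ on $[0,T]$ once more with the homogeneous CIR process with constant drift $K(0)(x_0 L_0(T) + b)$, which does not hit zero by the classical Feller test, shows that $Y > 0$ on $(0,T)$. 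The same holds at $t = 0$ when $x_0 > 0$. When $x_0 = 0$, the condition reads $2b/(\sigma^2 K(0)) \ge 1$, and the classical CIR started at $0$ immediately leaves $0$ and never returns. Hence $X_t \ge Y_t > 0$ for $t \in (0,T)$, so $\tau_0 \ge T$ almost surely.

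\textbf{Main obstacle.} I expect the rigorous justification of Step 1 to be the hardest part: the differentiability of $L_0 \ast X$ and the sign of the memory term when $L_0'$ is singular at $0$. I would handle it via the $\varepsilon$-shift approximation, or alternatively by working directly with the integrated form and running the comparison on integrated drifts, where only the monotonicity of $L_0$ is needed.
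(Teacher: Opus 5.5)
Your proposal is correct and follows essentially the paper's proof: convolve with $L$ to get the semimartingale drift $K(0)\bigl(b+x_0L_0(t)+(\beta-L_0(0))X_t-\int_0^t L_0'(t-s)X_s\,\mathrm{d}s\bigr)$, drop the nonnegative memory term, and compare with a CIR process that satisfies the Feller condition. The paper compares in one step with the homogeneous CIR with drift $K(0)(b+x_0L_0(T))+K(0)(\beta-L_0(0))y$, using the comparison theorem \cite[Chapter VI, Theorem 1.1]{MR1011252}, so your intermediate time-inhomogeneous CIR is not needed. Your $\varepsilon$-regularization is also unnecessary, since $L_0(0)<\infty$ and the monotonicity of $L_0$ already give $\int_0^t|L_0'(s)|\,\mathrm{d}s=L_0(0)-L_0(t)<\infty$.
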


The proof of this theorem is given at the end of Section 4, and is based on a comparison with a CIR process with time-dependent coefficients studied in \cite{MR1368374}. When $x_0 = 0$, this result coincides with that for the classical CIR process after rescaling the drift and volatility according to $b \mapsto K(0)b$ and $\sigma \mapsto K(0)\sigma$. However, for $x_0 > 0$, the memory term contributes the additional positive quantity $x_0L_0(T)$, which pushes the process away from the boundary. The case $T=\infty$ reflects non-attainment of the boundary and reads as $\P[\tau_0 = \infty] = 1$.

\subsection{Rough case}

Next, we focus on the rough case where $K$ is regularly varying as in~\eqref{eq: regularly varying}. In this regime, the process has significantly rougher sample paths, and it is natural to expect that it might hit zero in finite time. Our next result confirms this conjecture and shows that the rough square-root process even has an atom at zero.

\begin{Theorem}[Rough case]\label{Theorem: atom rough}
    Suppose that $K$ is regularly varying at zero, i.e., \eqref{eq: regularly varying} holds with $\alpha \in (1/2,1)$ and a slowly varying function $\ell > 0$. Define the constants
    \begin{align*}
        C_*(\alpha) = 2(2\alpha-1)\frac{\Gamma(1-\alpha)}{\Gamma(2-2\alpha)} \qquad \text{ and }\qquad C^*(\alpha) = \frac{\Gamma(3\alpha-1)}{2\Gamma(2\alpha-1)},
    \end{align*}
    and the time-dependent function
    \begin{align*}
        \Lambda_{\alpha}(t) = x_0 \frac{\Gamma(1-\alpha)}{\Gamma(2 - 2\alpha)}\frac{t^{1-2\alpha}}{\ell(t)^2} + \frac{b}{1-\alpha}\frac{t^{1-\alpha}}{\ell(t)}.
    \end{align*}
    Then for any $\varepsilon \in (0,1)$, there exists $t_0 > 0$ such that for all $t \in (0,t_0)$, the atom at zero satisfies the local bounds
    \begin{align*}
        \exp\bigg( - (1-\varepsilon)\frac{C^*(\alpha)\Lambda_{\alpha}(t)}{\sigma^2} \bigg) \geq \mathbb{P}[X_t = 0] \geq \exp\left( - (1+\varepsilon) \frac{C_*(\alpha)\Lambda_{\alpha}(t)}{\sigma^2} \right) > 0.
    \end{align*}
   For the pure fractional kernel $K(t) = \frac{t^{\alpha-1}}{\Gamma(\alpha)}$, we obtain for each $t > 0$ the global lower bound
    \begin{equation}\label{eq:glob l bd}
        \mathbb{P}[X_t = 0] \geq \exp\left( - \frac{C_*(\alpha)\Lambda_{\alpha}(t)}{\sigma^2} - \frac{2\beta}{\sigma^2}\mathbbm{1}_{\beta > 0} \left[ x_0 \frac{t^{1-\alpha}}{\Gamma(2-\alpha)} + b t \right] \right),
    \end{equation}
    where $\ell(t) \equiv 1$ in the evaluation of $\Lambda_\alpha(t)$. Finally, if $\beta = 0$, then we have the exact equality
    \[
        \mathbb{P}[X_t = 0] = \exp\left( - \frac{C_*(\alpha)\Lambda_{\alpha}(t)}{\sigma^2} \right).
    \]
\end{Theorem}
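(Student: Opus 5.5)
The approach is to express $\P[X_t=0]$ through the affine transformation formula \eqref{eq: affine trafo}. With $\mu = -u\delta_0$, $u>0$, dominated convergence gives
\[
\P[X_t=0]=\lim_{u\to\infty}\E[e^{-uX_t}] = \exp\Bigl(-x_0\lim_{u\to\infty}\bigl(u - \textstyle\int_0^t R(\psi_u)\,ds\bigr) + b\lim_{u\to\infty}\int_0^t\psi_u\,ds\Bigr),
\]
where $\psi_u=\psi(\cdot;-u\delta_0)$ solves $\psi_u = -uK + K\ast R(\psi_u)$. By the comparison principle for Volterra equations with completely monotone kernels, $u\mapsto\psi_u$ is monotonically decreasing. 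It therefore converges to a maximal (in modulus) solution $\psi_\infty\le 0$ of the Volterra Riccati equation with singular initial datum. It is convenient to rewrite everything with the resolvent of the first kind $L$. Convolving with $L$ gives $L\ast\psi_u = -u + \int_0^\cdot R(\psi_u)\,ds$. Hence the $x_0$-exponent equals $-(L\ast\psi_u)(t)$ evaluated in the limit, and
\[
\log \P[X_t=0] = x_0 (L\ast\psi_\infty)(t) + b\int_0^t \psi_\infty\,ds.
\]
So the key task is two-sided bounds on $\psi_\infty$, which must be finite and nonzero (for the positivity of the atom).

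\emph{Exact case $\beta=0$, $K=t^{\alpha-1}/\Gamma(\alpha)$.} Here the equation $\psi = -uK + \tfrac{\sigma^2}{2}I^\alpha(\psi^2)$ is a fractional Riccati equation. I would try the power ansatz $\psi_\infty(t)=-c\,t^{-\alpha}$. Using $I^\alpha t^{-2\alpha}=\frac{\Gamma(1-2\alpha)}{\Gamma(1-\alpha)}t^{-\alpha}$ fails since $2\alpha>1$. The better route is to work with the resolvent form $D^\alpha$-type identity $L\ast\psi = -u+\frac{\sigma^2}{2}\int\psi^2$, whose limit as $u\to\infty$ is read as a Caputo/Riemann--Liouville equation $D^{\alpha}\psi=\frac{\sigma^2}{2}\psi^2$ with singular initial behaviour. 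For the ansatz one has $D^\alpha t^{-\alpha}=\frac{\Gamma(1-\alpha)}{\Gamma(1-2\alpha)}t^{-2\alpha}$, which has the wrong sign. I therefore expect the true extremal solution to be $\psi_\infty = -c\,t^{\alpha-1}$-type near the singular term, and I would determine the constant by matching, so that $L\ast\psi_\infty$ and $\int\psi_\infty$ produce exactly $\frac{C_*}{\sigma^2}\bigl(x_0\frac{\Gamma(1-\alpha)}{\Gamma(2-2\alpha)}t^{1-2\alpha}+\frac{b}{1-\alpha}t^{1-\alpha}\bigr)$. This suggests $\psi_\infty(t)=-\frac{C}{\sigma^2}t^{-\alpha}$ with $C=2(2\alpha-1)\Gamma(1-\alpha)/\Gamma(2-2\alpha)\cdot$(normalisation). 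The exact identity would then be verified by direct Beta-function computation and uniqueness/maximality via the comparison Theorem~\ref{thm:strict}, using Theorem~\ref{Theorem: regularity} for the needed smoothness.

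\emph{General regularly varying $K$ and $\beta\ne0$.} I would produce sub- and supersolutions. The lower bound on $\P[X_t=0]$ comes from an explicit supersolution (a scaled version of the fractional profile with $\ell(t)$ inserted, with $\beta\psi$ dominated by the quadratic term near $0$). This costs the factor $(1+\varepsilon)$ for small $t$. For the pure fractional kernel the linear term is controlled globally by dropping $\beta\psi\ge\beta^+\psi$, producing the extra $\frac{2\beta}{\sigma^2}[\ldots]$ term in \eqref{eq:glob l bd}. The upper bound uses a subsolution built from $-u K$ iterated once: $\psi\le -uK+\frac{\sigma^2}{2}K\ast(\ldots)$, optimised in $u$. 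This yields the weaker constant $C^*(\alpha)=\Gamma(3\alpha-1)/(2\Gamma(2\alpha-1))$, with Potter bounds for $\ell$ giving the $(1-\varepsilon)$.

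The main obstacle is the comparison step. It requires a comparison principle for the Volterra Riccati equation in the singular-initial-datum limit, together with identification of $\lim\psi_u$ as the maximal solution, so that the explicit barriers genuinely bound $\psi_\infty$. I would handle it by applying Theorem~\ref{thm:strict} on $[\delta,t]$ and letting $\delta\to0$.
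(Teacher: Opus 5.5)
There is a genuine gap at the exact identity for $\beta=0$. Write $V_u=-\psi_u\ge 0$ and $V_\infty=-\psi_\infty$.

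\textbf{The constant is obtained circularly.} You fix it ``by matching'' so that $L\ast\psi_\infty$ and $\int_0^t\psi_\infty$ reproduce $\frac{C_*(\alpha)}{\sigma^2}\Lambda_\alpha(t)$, i.e.\ you read it off the statement. The detour comes from a sign error. For $\alpha\in(1/2,1)$ one has $\Gamma(1-2\alpha)<0$, so $D_L t^{-\alpha}=\frac{\Gamma(1-\alpha)}{\Gamma(1-2\alpha)}t^{-2\alpha}=-C_\alpha t^{-2\alpha}$. The ansatz $V=ct^{-\alpha}$ in $D_LV=-\frac{\sigma^2}{2}V^2$ therefore has the right sign and gives $c=2C_\alpha/\sigma^2=C_*(\alpha)/\sigma^2$ directly. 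The $t^{\alpha-1}$ profile you then guess is that of $V_u$ for finite $u$, not of $V_\infty$.

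\textbf{Comparison gives only one direction.} Exhibiting this explicit solution does not show that it equals $V_\infty$, and Theorem~\ref{thm:strict} cannot supply the missing half. Comparing $V_u$ with the strict supersolutions $Mt^{-\alpha}$, $M>2C_\alpha/\sigma^2$, works because $V_u\sim ut^{\alpha-1}/\Gamma(\alpha)$ is dominated by $t^{-\alpha}$ at $0$. This yields $V_\infty\le\frac{2C_\alpha}{\sigma^2}t^{-\alpha}$, i.e.\ only $\P[X_t=0]\ge\exp(-C_*(\alpha)\Lambda_\alpha(t)/\sigma^2)$. For the reverse inequality, every candidate subsolution $mt^{-\alpha}$ lies \emph{above} $V_u$ near $0$ for every finite $u$, so the initial hypothesis of Theorem~\ref{thm:strict} fails. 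Restarting on $[\delta,t]$ does not help: the extremum principle behind the theorem needs the sign of $f_2-f_1$ on all of $(0,t_*]$, since $D_L$ is nonlocal. Your once-iterated bound reaches only the constant $C^*(\alpha)$. So nothing in the proposal proves $V_\infty\ge\frac{2C_\alpha}{\sigma^2}t^{-\alpha}$.

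\textbf{The missing idea is self-similarity.} For $K(t)=t^{\alpha-1}/\Gamma(\alpha)$ and $\beta=0$, the substitution $t=\lambda x$ and uniqueness for \eqref{eq: Laplace Volterra Riccati equation} give $\lambda^\alpha V_u(\lambda t)=V_{u\lambda^{2\alpha-1}}(t)$. Letting $u\to\infty$ yields $\lambda^\alpha V_\infty(\lambda t)=V_\infty(t)$, hence $V_\infty(t)=V_\infty(1)t^{-\alpha}$ exactly. You then need that $V_\infty$ itself solves $D_LV_\infty=-\frac{\sigma^2}{2}V_\infty^2$ on $(0,\infty)$. As you rightly noted, the integral form cannot pass to the limit because $V_\infty^2\notin L^1$ near $0$. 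Instead, let $u\to\infty$ in $(L\ast V_u)(t)-(L\ast V_u)(\varepsilon)=\int_\varepsilon^t(\beta V_u-\frac{\sigma^2}{2}V_u^2)\,\mathrm{d}s$, using monotone convergence on the left and dominated convergence on the right with the upper bound already in hand, and differentiate. Inserting the power law gives $C_\alpha V_\infty(1)=\frac{\sigma^2}{2}V_\infty(1)^2$, and $V_\infty(1)\ge V_u(1)>0$ forces $V_\infty(1)=2C_\alpha/\sigma^2$.

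\textbf{Two smaller points.} First, for the global bound with $\beta>0$, the relation $\beta\psi\ge\beta^+\psi$ is an equality and produces nothing; for $\beta\le 0$ it correctly reduces to $\beta=0$. You need an explicit global supersolution, e.g.\ $g=\frac{2C_\alpha}{\sigma^2}t^{-\alpha}+\frac{2\beta}{\sigma^2}$. For it, $D_Lg-\beta g+\frac{\sigma^2}{2}g^2=\frac{2\beta}{\sigma^2}L_0+\beta\frac{2C_\alpha}{\sigma^2}t^{-\alpha}>0$, and integrating the added constant against $L_0$ and $1$ produces the bracket in \eqref{eq:glob l bd}.

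Second, your route to the local upper bound on $\P[X_t=0]$ is sound and simpler than the paper's. From $0\le V_u\le uE_\beta$ and the mild form, $V_u(t)\ge uE_\beta(t)-\frac{\sigma^2}{2}u^2(E_\beta\ast E_\beta^2)(t)$. Maximising over $u$ gives $V_\infty(t)\ge E_\beta(t)^2/(2\sigma^2(E_\beta\ast E_\beta^2)(t))\sim\frac{A_\alpha}{2\sigma^2}\frac{t^{-\alpha}}{\ell(t)}$ by Lemma~\ref{lemma: convolution regularly varying}. This is exactly the constant of Theorem~\ref{Theorem: lower bound rough}, which the paper obtains through a contraction argument on $u$-dependent intervals. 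Your supersolution argument for the local lower bound on $\P[X_t=0]$ matches the paper's.
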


Note that $\lim_{\alpha \searrow 1/2} C_*(\alpha) = \lim_{\alpha \searrow 1/2} C^*(\alpha) = 0$. Since $\Lambda_\alpha(t)$ remains finite for any fixed $t > 0$ as $\alpha \searrow 1/2$, the exponents in the local bounds vanish. Consequently, this shows that the global lower bound in \eqref{eq:glob l bd} converges to $1$ as $\alpha \searrow 1/2$, provided $\beta \leq 0$ and $t>0$ is fixed. As an illustration, the following table provides numerical values for the upper and lower bounds on the probability of an atom in the purely fractional case. 

\begin{table}[htpb]
    \centering
    \renewcommand{\arraystretch}{1.2}
    \begin{tabular}{@{}llccccc@{}}
        \toprule
        $\alpha$ & Bound & $t = 0.1$ & $t = 0.3$ & $t = 0.5$ & $t = 0.7$ & $t = 0.9$ \\ 
        \midrule
        0.55 & UB & 0.949062 & 0.944872 & 0.941212 & 0.938043 & 0.935209 \\ 
             & LB & 0.775435 & 0.758921 & 0.744729 & 0.732599 & 0.721928 \\ \addlinespace
        0.65 & UB & 0.816001 & 0.832152 & 0.833230 & 0.831647 & 0.829263 \\ 
             & LB & 0.257341 & 0.293327 & 0.295865 & 0.292151 & 0.286601 \\ \addlinespace
        0.75 & UB & 0.609351 & 0.683584 & 0.700832 & 0.706943 & 0.709012 \\ 
             & LB & 0.019009 & 0.047679 & 0.058197 & 0.062382 & 0.063858 \\ \addlinespace
        0.85 & UB & 0.312860 & 0.450271 & 0.486558 & 0.501642 & 0.508931 \\ 
             & LB & $5.58 \times 10^{-5}$ & 0.001201 & 0.002307 & 0.002984 & 0.003370 \\ \addlinespace
        0.95 & UB & 0.035052 & 0.086643 & 0.102804 & 0.109831 & 0.113361 \\ 
             & LB & $7.64 \times 10^{-13}$ & $1.43 \times 10^{-9}$ & $5.94 \times 10^{-9}$ & $1.03 \times 10^{-8}$ & $1.34 \times 10^{-8}$ \\ 
        \bottomrule
    \end{tabular}
    \caption{Global lower (LB) and upper (UB) bounds for the probability $\mathbb{P}[X_t = 0]$ using the pure fractional kernel with parameters $b=0.02$, $\beta=-0.3$, $\sigma=0.3$, and $x_0=0.02$. The same parameters have been used in \cite{MR3934104} and \cite{MR4521278}.}
    \label{tab:bounds_atom}
\end{table}

As a corollary, we immediately see that the Volterra square-root process with a regularly varying kernel necessarily hits zero with positive probability on any interval of the form $(0,t_0)$ for sufficiently small $t_0 > 0$.

\begin{Corollary}
    Suppose that $K$ is regularly varying, that is, \eqref{eq: regularly varying} holds with $\alpha \in (1/2,1)$ and a slowly varying function $\ell > 0$. Let $\tau_0 = \inf\{ t > 0 : X_t = 0 \}$ be the hitting time of zero. Then for each $t \in (0,t_0)$ with $t_0$ given as in Theorem \ref{Theorem: atom rough}, we find
    \[
        \P[\tau_0 \leq t] \geq \P[X_{t} = 0] > 0.
    \]
\end{Corollary}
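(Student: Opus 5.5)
The corollary follows directly from Theorem \ref{Theorem: atom rough}; only a set inclusion is needed. Fix $t \in (0,t_0)$, with $t_0$ chosen as in Theorem \ref{Theorem: atom rough} for some fixed $\varepsilon \in (0,1)$, say $\varepsilon = 1/2$.

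First, I show $\{X_t = 0\} \subseteq \{\tau_0 \leq t\}$. If $X_t(\omega) = 0$ with $t > 0$, then $t$ belongs to the set $\{s > 0 : X_s(\omega) = 0\}$. Its infimum $\tau_0(\omega)$ therefore satisfies $\tau_0(\omega) \leq t$. Continuity of paths is not needed for this; only $t > 0$ matters, and that holds since $t \in (0,t_0)$. One should still check that $\tau_0$ is measurable, so that $\P[\tau_0 \leq t]$ makes sense. This holds because $X$ has continuous sample paths, so $\tau_0$ is a hitting time of the closed set $\{0\}$ by a continuous adapted process.

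Monotonicity of $\P$ then gives $\P[\tau_0 \leq t] \geq \P[X_t = 0]$. Positivity comes from the lower bound in Theorem \ref{Theorem: atom rough}:
\[
\P[X_t = 0] \geq \exp\left( -(1+\varepsilon)\frac{C_*(\alpha)\Lambda_\alpha(t)}{\sigma^2}\right) > 0 .
\]
The exponential is strictly positive because its exponent is finite. Indeed, $C_*(\alpha)$ is finite for $\alpha \in (1/2,1)$, and $\Lambda_\alpha(t) < \infty$ for $t > 0$ because $\ell(t) > 0$.

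There is no real obstacle. The only point needing care is that the statement uses the same $t_0$ as Theorem \ref{Theorem: atom rough}, which depends on the chosen $\varepsilon$; any fixed $\varepsilon \in (0,1)$ works.
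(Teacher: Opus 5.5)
Your proposal is correct and matches the paper's argument. The paper gives no separate proof and presents the corollary as immediate from Theorem~\ref{Theorem: atom rough}, which is exactly your combination of the inclusion $\{X_t = 0\} \subseteq \{\tau_0 \leq t\}$ with the strictly positive lower bound on $\P[X_t = 0]$ for $t \in (0,t_0)$.
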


In contrast to the case of regular kernels, in the rough regime there is \textit{no Feller condition} that classifies boundary attainment. 

\subsection{Equivalent martingale measures in the Volterra Heston model}

Let us model a risky asset $S$ under the real-world measure~$\P$ by 
\begin{align}\label{eq: volterra heston under P}
    \frac{\mathrm{d}S_t}{S_t} = \mu_t\,\mathrm{d}t + \sqrt{X_t}\,\mathrm{d}W_t,
\end{align}
where $\mu_t$ is the (relative) drift of the asset, $X$ denotes the Volterra
square-root process~\eqref{eq: VCIR}, and~$W$ is a Brownian motion with $\mathrm{d}\langle W,B\rangle_t = \rho\,\mathrm{d}t$ with correlation coefficient $\rho\in[-1,1]$.
Let~$r$ denote the risk-free interest rate. Let $\Q \sim \P$ be a risk-neutral pricing measure obtained from Girsanov's theorem via 
\[
    W_t^{\Q} := W_t + \int_0^t \theta^1_s\,\mathrm{d}s 
    \qquad \text{and} \qquad 
    B_t^{\Q} := B_t + \int_0^t \theta^2_s\,\mathrm{d}s,
\]
such that $W^{\Q}$ and $B^{\Q}$ are standard Brownian motions under $\Q$ with the same correlation structure. Here, the progressively measurable processes $(\theta^1_t,\theta^2_t)_{t\ge 0}$ represent the effective market prices of risk. 
Under standard assumptions, \emph{all} equivalent probability measures
are of this form (see Theorem~5.4.1 in~\cite{Wi06}). For~$\mathbb Q$ to be
a local martingale measure, the asset price dynamics under~$\Q$ must take the risk-neutral form
\[
    \frac{\mathrm{d}S_t}{S_t} = r\, \mathrm{d}t + \sqrt{X_t}\, \mathrm{d}W_t^{\Q},
\]
which relates $\theta^1$ and $\mu_t$ via
\begin{equation}\label{eq:th mu}
  \theta_t^1 = \frac{\mu_t-r}{\sqrt{X_t}}.
\end{equation}
The choice of $\theta^2$ is not further restricted. Among the possible choices of $(\theta^1,\theta^2)$ the affine specification
\begin{equation}\label{eq: affine market price risk}
    \theta^1_t = \frac{\lambda_0}{\sqrt{X_t}} + \lambda_1\sqrt{X_t},
    \qquad
    \theta^2_t = \frac{\eta_0}{\sqrt{X_t}} + \eta_1\sqrt{X_t},
\end{equation}
with constants $\lambda_0,\lambda_1,\eta_0,\eta_1$, plays a central role, since it preserves the affine structure of the dynamics. Then, \eqref{eq:th mu} becomes
\[
  \mu_t = r + \lambda_0 + \lambda_1 X_t.
\]
Due to the particular form of \eqref{eq: affine market price risk}, the variance process under $\Q$ retains the form \eqref{eq: VCIR} with risk-adjusted coefficients
\[
    \widetilde{b} = b - \sigma\eta_0 \quad \text{ and } \quad \widetilde{\beta} = \beta - \sigma\eta_1. 
\]
Such a transformation is only meaningful under the premise that the Girsanov change of measure is well-defined. A necessary condition is that $\int_0^T \bigl(|\theta^1_s| + |\theta^2_s|\bigr)\,\mathrm{d}s < \infty$ holds $\P$-a.s. For the 
choice~\eqref{eq: affine market price risk}, with arbitrary~$\lambda_0$ and~$\eta_0$, this requires that
\begin{equation}\label{eq:int X 1/2}
    \int_0^T X_s^{-1/2}\,\mathrm{d}s < \infty
    \qquad \text{a.s.}
\end{equation}
For regular kernels, this holds whenever the boundary condition \eqref{eq: Volterra Feller boundary condition} is satisfied with $p=\tfrac12$. For \textit{rough kernels}, however, this condition \textit{fails to hold}. Indeed, let $L_t = \int_0^t \mathbbm{1}_{\{X_s = 0\}}\, \mathrm{d}s$ and note that $\E[L_t] = \int_0^t \P[X_s = 0]\,\mathrm{d}s$. By Theorem \ref{Theorem: atom rough}, we have $\P[X_s = 0] > 0$ for all $s \in (0,t_0)$, and hence $\E[L_t] > 0$. Since $L_t$ is nonnegative, it follows that $\P[L_t > 0] > 0$. On the event $\{L_t > 0\}$, the set $\mathcal{Z}_t = \{s \in [0,t] : X_s = 0\}$ has positive Lebesgue measure. Consequently, we find $\int_0^t X_s^{-1/2}\,\mathrm{d}s = \infty$ on the event $\{L_t > 0 \}$.

Thus, we necessarily have the severe restriction $\lambda_0 = \eta_0 = 0$ in \eqref{eq: affine market price risk}. In particular, an affine change of measure cannot alter the long-term mean of the variance process.
The resulting $\mathbb P$-drift, i.e.\ $\mu_t = r + \lambda_1 X_t$ with constant~$\lambda_1$,
is assumed in~\cite{HaWo21}, which is one of the few papers that consider
the Volterra Heston model under the real-world measure. Note that, for equity models, this specification is problematic. For $\lambda_1>0$, the drift increases
with volatility, contradicting the leverage effect. For $\lambda_1<0$, the model claims that the equity risk premium cannot be positive. The latter means that a long position in the underlying
is unattractive, which undermines the crucial assumption that the underlying can be liquidly traded.

\begin{Proposition}\label{prop:no elmm}
    A rough Volterra Heston model~\eqref{eq: volterra heston under P} with constant drift~$\mu \neq r$ under~$\mathbb P$
admits no equivalent local martingale measure. 
\end{Proposition}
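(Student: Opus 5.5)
We argue by contradiction: we suppose that an equivalent local martingale measure $\Q \sim \P$ exists and derive a contradiction from the atom at zero established in Theorem~\ref{Theorem: atom rough}.

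\textbf{Step 1: the form of $\Q$.} By the martingale representation discussion preceding the proposition (Theorem~5.4.1 in~\cite{Wi06}), the density process of $\Q$ with respect to $\P$ is a stochastic exponential driven by $(W,B)$. By Girsanov's theorem there are progressively measurable processes $(\theta^1,\theta^2)$ with $\int_0^T \bigl((\theta^1_s)^2 + (\theta^2_s)^2\bigr)\,\mathrm{d}s < \infty$ $\P$-a.s., such that $W^{\Q}_t = W_t + \int_0^t \theta^1_s\,\mathrm{d}s$ is a $\Q$-Brownian motion.

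\textbf{Step 2: the drift condition on the zero set of $X$.} Under $\Q$, the discounted price $\mathrm{e}^{-rt}S_t$ must be a local martingale. Its $\Q$-dynamics have finite-variation part $\bigl(\mu - r - \sqrt{X_t}\,\theta^1_t\bigr)S_t\,\mathrm{d}t$. This part must vanish $\mathrm{d}t\otimes\mathrm{d}\P$-a.e., which gives
\[
    \sqrt{X_t}\,\theta^1_t = \mu - r \qquad \mathrm{d}t\otimes\mathrm{d}\P\text{-a.e.}
\]
This identity is the precise form of~\eqref{eq:th mu}, and the careful justification here is that a continuous local martingale of finite variation is constant. On the set $\{(t,\omega) : X_t(\omega) = 0\}$ the left-hand side is $0$, while the right-hand side is $\mu - r \neq 0$. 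Hence this set must be $\mathrm{d}t\otimes\mathrm{d}\P$-null, i.e.
\[
    \E\Bigl[\int_0^T \mathbbm{1}_{\{X_s = 0\}}\,\mathrm{d}s\Bigr] = 0
\]
for every $T > 0$.

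\textbf{Step 3: contradiction with the atom.} By Fubini,
\[
    \E\Bigl[\int_0^T \mathbbm{1}_{\{X_s = 0\}}\,\mathrm{d}s\Bigr] = \int_0^T \P[X_s=0]\,\mathrm{d}s.
\]
By Theorem~\ref{Theorem: atom rough}, $\P[X_s=0] > 0$ for all $s \in (0,t_0)$, so the right-hand side is strictly positive for any $T > 0$. This contradicts Step 2.

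An equivalent way to see the contradiction, following the discussion before the proposition, is the following. On the event $\{L_T > 0\}$, which has positive $\P$-probability, we have $\int_0^T (\theta^1_s)^2\,\mathrm{d}s \ge \int_0^T (\mu-r)^2 X_s^{-1}\,\mathrm{d}s = \infty$, with the convention $1/0=\infty$. This violates the integrability required for the Girsanov kernel. Since $\Q\sim\P$, the event also has positive $\Q$-probability, so the contradiction holds under either measure.

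\textbf{Main obstacle.} The main obstacle is Step 2: justifying that \emph{every} equivalent local martingale measure arises through a Girsanov kernel satisfying the drift identity $\mathrm{d}t\otimes\mathrm{d}\P$-a.e., without assuming an affine form for $\theta$. I would handle this via the predictable representation property of the Brownian filtration generated by $(W,B)$, as in~\cite{Wi06}. The identity then follows from uniqueness of the canonical semimartingale decomposition of $\log S$ under $\Q$.
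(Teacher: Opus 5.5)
Your proof is correct and is essentially the paper's argument: both combine the drift identity \eqref{eq:th mu} with Theorem~\ref{Theorem: atom rough} and Fubini to obtain a zero set $\{X_s=0\}$ of positive $\mathrm{d}t\otimes\mathrm{d}\P$-measure. The paper phrases the contradiction as the failure of $\int_0^T X_s^{-1/2}\,\mathrm{d}s<\infty$, while your Step~2 phrases it as the impossibility of $\sqrt{X_t}\,\theta^1_t=\mu-r\neq 0$ on that set; your version is the cleaner one because it never divides by $\sqrt{X_t}$, so the closing remark using the convention $1/0=\infty$ (not literally valid for a real-valued $\theta^1$) can simply be dropped.
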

Indeed, by~\eqref{eq:th mu}, this result follows again from the failure of~\eqref{eq:int X 1/2}. Even more so, to avoid this condition, admissible natural choices for the drift are $\mu_t = r + h(X_t)$ where $h$ is continuous and $h(x)/\sqrt{x}$ is has a continuous extension onto $x=0$. 

While the classical Heston model has well-known drawbacks in comparison to the rough Heston model, the situation regarding equivalent martingale measures
is completely different, as discussed in~\cite{DeLeRo21}.
For instance, under the Feller condition, an equivalent local
martingale measure exists if the drift under~$\mathbb P$
is an arbitrary continuous function of the variance process, e.g.\ a constant (see Theorem~3.7 in~\cite{DeLeRo21}).

As another implication, let us consider a payoff at time $T > 0$ represented by the $\F_T$-measurable random variable $V_T \in L^2(\Omega)$. Suppose that the price process is a square-integrable martingale. The GKW-decomposition (as used e.g.\ in variance-optimal hedging) provides a decomposition with respect to the price process of the form
\[
    \E[V_T\ | \mathcal{F}_t] = \E[V_T] + \int_0^t \vartheta_s\, \mathrm{d}S_s + L_t,
\]
where $\int\vartheta dS$ is the hedgable part, and $L_t$ satisfies $\langle S, L \rangle = 0$ and represents the residual error that is not hedgable by $S$. The replication strategy $\vartheta$ belongs to the Hilbert space with norm
\[
    \E\left[ \int_0^T \vartheta_t^2\, \mathrm{d}\langle S\rangle_t \right] < \infty
\]
and can be explicitly computed from $\vartheta_t = \mathrm{d}\langle H, S\rangle_t/ \mathrm{d}\langle S \rangle_t$ where $H_t = \E[V_T  | \mathcal{F}_t]$. In particular, $\vartheta$ is unique up to $\mathrm{d}\langle S\rangle_t \mathrm{d}\P$ equivalence. Since $\mathrm{d}\langle S\rangle_t \mathrm{d}\P = X_t S_t^2 \mathrm{d}t \mathrm{d}\P$
for the stochastic volatility model with variance process~$X$, we find that each element of the set $\{ \vartheta_t + \phi_t \mathbbm{1}_{X_t = 0} \ : \ \phi \text{ adapted } \}$ determines the same equivalence class and hence provides the same hedging error. For the particular case of the Volterra Heston model with regularly varying kernel $K$, this collection of equivalence classes is nontrivial since $\Omega_0 = \{ (t,\omega) \in [0,T] \times \Omega : X_t(\omega) = 0\}$ satisfies by Fubini's Theorem
\[
\P[\Omega_0] = \int_0^T \P[X_t = 0]\, \mathrm{d}t > 0.
\]
Hence, the hedging strategy is not uniquely determined on events of positive probability.

Further consequences of the atom at zero are left for future research. For example,
the results of~\cite{MaHiJa17} could be used to analyze the effect of the atom on realized variance options.

\subsection{Boundary behaviour of the limit distribution}

Finally, we study the boundary behaviour of the limit distribution for the Volterra square-root process. According to \cite{FJ22}, the limit distribution exists whenever the process is subcritical. A characterisation can be given in terms of the asymptotic behaviour of its first moment. The latter is characterised by the resolvent $E_{\beta} \in L_{\mathrm{loc}}^2(\R_+)$ defined as the unique solution of the linear Volterra equation
\begin{align}\label{eq: Ebeta definition}
        E_{\beta}(t) = K(t) + \beta \int_0^t K(t-s)E_{\beta}(s)\, \mathrm{d}s.
\end{align}
Since $K$ is locally square integrable by assumptions, Young's inequality implies that $K \ast E_{\beta}$ is continuous on $\R_+$, and hence $E_{\beta}$ is continuous on $(0,\infty)$. Moreover, it can be shown that $E_{\beta}$ is nonnegative. 

\begin{Definition}
    The Volterra square-root process is called subcritical, if 
    \[
        \beta < \left( \int_0^{\infty} K(t)\, \mathrm{d}t \right)^{-1}
    \]
    with the convention $(\int_0^{\infty} K(t)\, \mathrm{d}t)^{-1} = 0$, whenever $K \not \in L^1(\R_+)$.
\end{Definition}
Such a condition is always satisfied, if $\beta \leq 0$. When $K \in L^1(\R_+)$, it is equivalent to $\beta \int_0^{\infty}K(t) < 1$, which also reads as $\beta < L_0(\infty)$, see \eqref{eq: 7}. Lemma \ref{lemma: subcriticality} from the appendix shows that subcriticality is equivalent to if $E_{\beta} \in L^1(\R_+)$. For $\beta < L_0(\infty)$, this lemma shows that $E_{\beta} \in L^2(\R_+)$ is completely monotone, and strictly positive. Hence we can apply \cite[Theorem 5.3, Theorem 5.4]{FJ22}, and there exists a unique probability measure $\pi_{x_0}$ on $\R_+$ with finite moments of all orders such that $X_t \Longrightarrow \pi_{x_0}$ weakly as $t \to \infty$. For $u \in \C_-$, its Fourier-Laplace transform is given by 
\begin{equation}\label{eq: pi affine formula}
    \int_{\R_+} \mathrm{e}^{ux}\,\pi_{x_0}(\mathrm{d}x)
    = \exp\left( x_0 u + x_0 \int_0^{\infty} R(\psi(s; u))\, \mathrm{d}s
        + b \int_0^{\infty} \psi(s;u)\, \mathrm{d}s \right),
\end{equation}
where $\psi(\cdot;u) = \psi(t; u \delta_0)$ is given by \eqref{eq: Volterra Riccati measure}, and it was shown that $\psi(\cdot; u) \in L^1(\R_+) \cap L^2(\R_+)$, so that all integrals are well-defined. As a consequence of memory, the limit distribution $\pi_{x_0}$ depends on the initial state $x_0$ if and only if $K \in L^1(\R_+)$. The next theorem provides a characterisation of its boundary behaviour.

\begin{Theorem}[Boundary of the limit distribution]\label{thm: limit_distribution}
    Assume that the Volterra square-root process is subcritical. Then the limit distribution $\pi_{x_0}$ satisfies the following:
    \begin{enumerate}
        \item[(a)] Suppose that $K \in C^1(\R_+)$. Then for each $0 < p < \gamma_-(\infty)$
        \[
            \frac{1}{\theta(\infty)^{p}} \frac{\Gamma( \gamma_+(\infty) - p)}{\Gamma(\gamma_+(\infty))} \leq \int_{\R_+} x^{-p}\, \pi_{x_0}(\mathrm{d}x) \leq \frac{\Gamma(\gamma_{-}(\infty) - p)}{\Gamma(\gamma_{-}(\infty))} \frac{1}{\theta(\infty)^{p}}.
        \]
        where 
        \[
            \gamma_-(\infty) = \frac{2(x_0 L_0(\infty) + b)}{\sigma^2 K(0)}, \quad \gamma_+(\infty) = \frac{L_0(0) - \beta}{L_0(\infty) - \beta} \gamma_-(\infty), \quad \theta(\infty) = \frac{\sigma^2 K(0)^2}{2 (L_0(0) - \beta)}.
        \]
        If $p \geq \gamma_+(\infty)$, then $\int_{\R_+} x^{-p}\pi(\mathrm{d}x) = \infty$.
        \item[(b)] If $K$ is regulary varying at zero, i.e.\ of the form~\eqref{eq: regularly varying}, then
        \[
            \lim_{t \to \infty}\P[X_t=0] = \pi_{x_0}(\{0\}) > 0. 
        \]
    \end{enumerate}
\end{Theorem}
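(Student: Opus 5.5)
The plan is to pass to the limit $t\to\infty$ in the finite-time estimates, working throughout with the Laplace transform \eqref{eq: pi affine formula} and the representation
\[
x^{-p}=\Gamma(p)^{-1}\int_0^\infty u^{p-1}\mathrm{e}^{-ux}\,\mathrm{d}u .
\]
By Tonelli this gives
\[
\int x^{-p}\,\pi_{x_0}(\mathrm{d}x)=\Gamma(p)^{-1}\int_0^\infty u^{p-1}\,\Phi_\infty(-u)\,\mathrm{d}u ,
\]
where $\Phi_\infty(-u)=\exp\bigl(-x_0u+x_0\int_0^\infty R(\psi(s;-u))\,\mathrm{d}s+b\int_0^\infty\psi(s;-u)\,\mathrm{d}s\bigr)$. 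The same identity holds at finite $t$ with $\Phi_t$ given by \eqref{eq: affine trafo}.

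For part (a), I will reuse the two-sided bounds on $\psi(\cdot;-u)$ from the proof of Theorem \ref{thm: boundary regular case}. These come from comparison with Riccati ODEs, via the resolvent of the first kind and Theorem \ref{thm:strict}. They give, for each $t$,
\[
\theta(t)^{-p}\frac{\Gamma(\gamma_+(t)-p)}{\Gamma(\gamma_+(t))}\ \le\ \E[X_t^{-p}]\ \le\ \theta(t)^{-p}\frac{\Gamma(\gamma_-(t)-p)}{\Gamma(\gamma_-(t))}
\]
in the case $x_0=0$. For $x_0>0$ they give bounds on $\Phi_t(-u)$ of the form $(1+\theta(t)u)^{-\gamma_\pm(t)}$ times factors $\mathrm{e}^{-x_0(\cdots)}$.

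The key step is to check the limits of the parameters. Subcriticality means $\lambda=\beta-L_0(0)<\beta-L_0(\infty)\le0$, so $\lambda<0$. Hence $\theta(t)\to\sigma^2K(0)^2/(2(L_0(0)-\beta))=\theta(\infty)$ and $C(t)\to 2x_0(L_0(0)-\beta)/(\sigma^2K(0)^2)$. Also $\gamma_-(t)\to\gamma_-(\infty)$. Since $\beta<L_0(\infty)\le L_0(t)$, the second case of $\Lambda$ applies, and I expect the $x_0$-part of $\Lambda$ to be absorbed in the limit comparison. This should yield $\gamma_+(\infty)=\frac{L_0(0)-\beta}{L_0(\infty)-\beta}\gamma_-(\infty)$.

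\textbf{Main obstacle.} The $x_0$-dependent exponential factors in the finite-$t$ bounds do not match the clean limit statement. So rather than taking limits of the final moment bounds, I will take limits at the level of the Laplace transforms. Weak convergence $X_t\Rightarrow\pi_{x_0}$ gives $\Phi_t(-u)\to\Phi_\infty(-u)$ pointwise. I will then redo the comparison directly on $[0,\infty)$: $\psi(\cdot;-u)$ is compared with Riccati ODE solutions with constant coefficients $L_0(\infty)$ and $L_0(0)$, and $x_0u+x_0\int R(\psi)$ is rewritten via $L\ast\psi$ into the effective drift $x_0L_0$. Integrating over all of $\mathbb{R}_+$ should turn the time-dependent effective drift into its limits, giving
\[
(1+\theta(\infty)u)^{-\gamma_+(\infty)}\ \le\ \Phi_\infty(-u)\ \le\ (1+\theta(\infty)u)^{-\gamma_-(\infty)} .
\]
The Beta integral
\[
\int_0^\infty u^{p-1}(1+\theta u)^{-\gamma}\,\mathrm{d}u=\theta^{-p}\,\Gamma(p)\,\Gamma(\gamma-p)/\Gamma(\gamma)
\]
then produces both bounds. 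Divergence for $p\ge\gamma_+(\infty)$ follows because the lower integrand behaves like $u^{p-1-\gamma_+}$, which is not integrable at infinity. Verifying these infinite-horizon comparisons, including integrability of $\psi$ in $L^1$ from \cite{FJ22} and strict positivity, is the delicate step.

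For part (b), I write $\pi_{x_0}(\{0\})=\lim_{u\to\infty}\Phi_\infty(-u)$ and $\P[X_t=0]=\lim_{u\to\infty}\Phi_t(-u)$. By Theorem \ref{Theorem: atom rough}'s mechanism, $\psi(\cdot;-u)$ decreases monotonically in $u$ to the maximal (most negative) solution $\psi^*$ of the Volterra Riccati equation with infinite initial datum, which is integrable near $0$ in the rough case. I will show that $\psi^*\in L^1(\mathbb{R}_+)$ using the bounds for the subcritical regime, namely decay at infinity dominated by $E_\beta$-type behaviour.

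Then $\pi_{x_0}(\{0\})$ equals the exponential of finite quantities and is therefore positive. Finally, $\P[X_t=0]=\exp(\text{integrals of }\psi^*\text{ over }[0,t])$ is monotone in $t$ and converges by monotone convergence. The limits in $u$ and $t$ can be interchanged by monotonicity: $\psi(\cdot;-u)$ decreases in $u$, and $R(\psi)$ is handled through the equation.
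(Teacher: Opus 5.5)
\textbf{There is a genuine gap in (a).} The decisive step is the lower bound $\int_{\R_+}\mathrm{e}^{-ux}\,\pi_{x_0}(\mathrm{d}x)\ge(1+u\theta(\infty))^{-\gamma_+(\infty)}$, i.e.\ an upper bound on $\int_0^\infty V_u$ with $V_u=-\psi(\cdot;-u\delta_0)$. Your plan does not supply it.

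First, the obstacle is misdiagnosed. Since $\lambda<0$, you have $C(t)\to0$ (not $2x_0(L_0(0)-\beta)/(\sigma^2K(0)^2)$) and $W_u(t)\to0$. So the factors $\exp(-\tfrac{x_0}{K(0)}W_u(t))$ tend to $1$ and are harmless. Indeed, the upper bound in (a) follows simply by letting $t\to\infty$ in $\E[\mathrm{e}^{-uX_t}]\le(1+u\theta(t))^{-\gamma_-(t)}$ and using weak convergence. The real problem is $\gamma_+(t)$: its $x_0$-part $x_0L_0(0)\mathrm{e}^{\beta^+K(0)t}$ tends to $x_0L_0(0)\ge x_0L_0(\infty)\frac{L_0(0)-\beta}{L_0(\infty)-\beta}$ if $\beta\le0$, and to $\infty$ if $0<\beta<L_0(\infty)$. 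Nothing is absorbed in the limit.

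Second, your replacement, a pointwise comparison of $V_u$ with the Riccati ODE with coefficient $\beta-L_0(\infty)$, is not available. In $V_u'=K(0)(\beta-L_0(0))V_u-K(0)\int_0^tV_u(s)L_0'(t-s)\,\mathrm{d}s-\tfrac{\sigma^2K(0)}{2}V_u^2$, the memory term weights the larger past values of the decreasing function $V_u$. It therefore cannot be bounded by $K(0)(L_0(0)-L_0(\infty))V_u(t)$. The integrated inequality $K(0)^{-1}V_u(t)\le u+\int_0^t\bigl((\beta-L_0(\infty))V_u-\tfrac{\sigma^2}{2}V_u^2\bigr)\mathrm{d}s$ has a decreasing nonlinearity, so no comparison follows from it either. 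Even if the comparison held, it would give $(1+\tilde\theta u)^{-\gamma_-(\infty)}$ with a different $\tilde\theta$. The ratio $\frac{L_0(0)-\beta}{L_0(\infty)-\beta}$ never comes out of an ODE comparison.

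The missing idea is to use exact identities at $t=\infty$. Since $0\le V_u\le uE_\beta$ with $E_\beta\in L^1(\R_+)$ completely monotone, $(L\ast V_u)(t)\to L_0(\infty)\int_0^\infty V_u$. Hence \eqref{eq: pi affine formula} becomes $\int\mathrm{e}^{-ux}\pi_{x_0}(\mathrm{d}x)=\exp\bigl(-(x_0L_0(\infty)+b)\int_0^\infty V_u\bigr)$, which puts the $x_0$- and $b$-parts on the same footing. Letting $t\to\infty$ in \eqref{eq: L convolution V}, and using $V_u\ge W_u\ge0$ together with the ODE for $W_u$, gives
\[
(L_0(\infty)-\beta)\int_0^\infty V_u=u-\frac{\sigma^2}{2}\int_0^\infty V_u^2\le u-\frac{\sigma^2}{2}\int_0^\infty W_u^2=(L_0(0)-\beta)\int_0^\infty W_u .
\]
Equivalently, let $t\to\infty$ in Theorem~\ref{Theorem: lower bound}(c), which applies for every $t$ under subcriticality. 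Then $\int_0^\infty W_u=\frac{2}{\sigma^2K(0)}\log(1+u\theta(\infty))$ yields exactly $\gamma_\pm(\infty)$.

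\textbf{Two steps in (b) also fail as stated.}

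Integrability of $V_\infty$ at infinity cannot come from ``$E_\beta$-type decay'': the only such bound, $V_u\le uE_\beta$, degenerates as $u\to\infty$. Instead, use \eqref{eq: LVinfty} on $[t_0,t]$. Since $K(0)=\infty$, you have $L=L_0\ge L_0(\infty)$, so $(L\ast V_\infty)(t)\ge L_0(\infty)\int_{t_0}^tV_\infty$. Dropping the quadratic term then gives $(L_0(\infty)-\beta)\int_{t_0}^tV_\infty\le(L\ast V_\infty)(t_0)<\infty$.

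Moreover, $\P[X_t=0]=\exp\bigl(-x_0(L\ast V_\infty)(t)-b\int_0^tV_\infty\bigr)$ need not be monotone in $t$. The term $b\int_0^tV_\infty$ increases, while $x_0(L\ast V_\infty)(t)$ is a convolution with the decreasing kernel $L_0$ that even tends to $+\infty$ as $t\searrow0$ (Theorem~\ref{Theorem: lower bound rough} and Lemma~\ref{lemma: first kind asymptotics}). So monotone convergence in $t$ is unavailable. What works is $(L\ast V_\infty)(t)\to L_0(\infty)\int_0^\infty V_\infty$ and $\int_0^tV_\infty\to\int_0^\infty V_\infty$. Match this with $\pi_{x_0}(\{0\})=\exp\bigl(-(x_0L_0(\infty)+b)\int_0^\infty V_\infty\bigr)$, which follows from the identity above by monotone convergence in $u$. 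No interchange of the $u$- and $t$-limits is needed.
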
 

Since $\pi_{x_0}$ is absolutely continuous with respect to the Lebesgue measure on $(0,\infty)$ (see \cite{FJ22}), it follows that $\pi_{x_0}(\mathrm{d}x) = \pi_{x_0}(\{0\})\delta_0(\mathrm{d}x) + p_{x_0}(x)\, \mathrm{d}x$, where $p_{x_0}$ is the density of the limit distribution. For regular kernels $K \in C^1(\R_+)$ with $x_0L_0(\infty) + b > 0$ (e.g. if $b > 0$ or $x_0 > 0$ and $K \in L^1(\R_+)$), we find $\pi_{x_0}(\{0\}) = 0$, and hence the limit distribution is absolutely continuous with respect to the Lebesgue measure on $\R_+$. However, in the rough case, the limit distribution always has an atom at zero. 

Moreover, while Theorem \ref{Theorem: atom rough} shows that $\P[X_t = 0] > 0$ holds for sufficiently small $t$ and any regularly varying kernel at zero, Theorem \ref{thm: limit_distribution}.(b) implies that $\P[X_t=0] > 0$ also holds for sufficiently large $t$. We expect that the atom is also present for intermediate values $t \in [\delta, \delta^{-1}]$ with $\delta > 0$. The latter is justified for the fractional kernel as shown in Theorem \ref{Theorem: atom rough} and \cite{BJ26}.

\subsection{Structure of the work}

This work is organised as follows. In Section 2, we first prove a regularisation result for convolutions with a Volterra kernel, and then establish Theorem \ref{Theorem: regularity} via a bootstrapping argument. Section 3 lays out the core methods of this work. Specifically, we derive upper bounds for the maximal solution of the corresponding Volterra Riccati equation and then obtain analogous lower bounds. Based on these upper and lower bounds, our main results are proved in Section 4. The appendix contains auxiliary results and, in particular, a comparison theorem for generalized Riemann-Liouville differential equations.

\section{Local regularity for the Volterra Riccati equation}

In this section, we show that the unique solution of \eqref{eq: Volterra Riccati measure} is smooth on $(0,\infty)$. Here and below, we let $C^{\eta}([a,b])$ denote the H\"older space of $\eta \in (0,1)$ H\"older continuous functions on $[a,b]$. Given $m \geq 1$, we let $C^{m, \eta}([a,b])$ be the space of $m$-times continuously differentiable functions on $[a,b]$ such that $f^{(m)} \in C^{\eta}([a,b])$. In this section, we let $K \in L_{\mathrm{loc}}^1(\R_+)$, and for $g \in L_{\mathrm{loc}}^1(\R_+)$, we define the convolution operator
\[
(\mathcal K g)(t):=\int_0^t K(t-s)g(s)\,\mathrm{d}s,\qquad t\in(0,T].
\]
Although Volterra kernels that satisfy (K) are of our primary interest, the results of this section do not explicitly use assumption (K). The following lemma establishes the core smoothing property of convolutions. 

\begin{Lemma}[Global smoothing by convolutions]\label{lem: global smoothing}
Let $T > 0$, $K \in C^{\infty}((0,T])$ and assume that there exists $\rho \in (0,1)$ such that for every integer $m\ge 0$ there exists a constant $C_m > 0$ with the property
\begin{align}\label{eq: smoothing kernel}
|K^{(m)}(t)| \le C_m\, t^{\rho-1-m}, \qquad t \in (0,T].
\end{align} 
Let $\eta \in [0,1)$ such that $\eta+\rho \notin \mathbb{N}$ and $m \ge 0$. Then for each function $f \in C^{m, \eta}([0,T])$ satisfying $f(0) = f'(0) = \dots = f^{(m)}(0) = 0$, the convolution $\mathcal{K}f$ has the Hölder regularity
\[
\mathcal{K}f \in \begin{cases} 
C^{m,\eta+\rho}([0,T]), & \quad\text{if }\eta+\rho<1, \\ 
C^{m+1,\eta+\rho-1}([0,T]), & \quad\text{if } \eta+\rho>1. 
\end{cases}
\]
\end{Lemma}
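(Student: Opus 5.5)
The plan is to first reduce to $m=0$ by differentiating under the convolution, and then to prove the Hölder gain by the classical Riesz-potential argument.

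\emph{Reduction to $m=0$.} Let $f\in C^{m,\eta}([0,T])$ with $f(0)=\dots=f^{(m)}(0)=0$. Writing $(\mathcal K f)(t)=\int_0^t K(s)f(t-s)\,\mathrm{d}s$ and using $K\in L^1(0,T)$ (from \eqref{eq: smoothing kernel} with $m=0$), we may differentiate under the integral. The boundary term is $K(t)f(0)=0$, so $(\mathcal K f)'=\mathcal K f'$. Iterating gives $(\mathcal K f)^{(j)}=\mathcal K f^{(j)}$ for $j\le m$. Each $\mathcal K f^{(j)}$ is continuous on $[0,T]$ and vanishes at $0$, so $\mathcal K f\in C^m$. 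It therefore suffices to show, for $g:=f^{(m)}\in C^{\eta}([0,T])$ with $g(0)=0$, that
\[
\mathcal K g\in C^{\eta+\rho}([0,T]) \ \text{ if } \eta+\rho<1,\qquad \mathcal K g\in C^{1,\eta+\rho-1}([0,T]) \ \text{ if } \eta+\rho>1.
\]
The condition $g(0)=0$ gives $|g(s)|\le [g]_\eta s^\eta$. This lets us extend $g$ by $0$ to $(-\infty,0]$ as an $\eta$-Hölder function, so that $\mathcal K g(t)=\int_0^t K(s)g(t-s)\,\mathrm{d}s$ with no boundary defect at $t=0$.

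\emph{Case $\eta+\rho<1$.} Fix $0\le t<t+h\le T$. We split
\[
\mathcal K g(t+h)-\mathcal K g(t)=\int_t^{t+h}K(s)g(t+h-s)\,\mathrm{d}s+\int_0^t K(s)\bigl[g(t+h-s)-g(t-s)\bigr]\,\mathrm{d}s .
\]
The naive bound on the second term is only $h^\eta$, so the first move is to use cancellation. Rewrite it with the shifted kernel as $\int K(t+h-u)g(u)-\int K(t-u)g(u)$ and subtract $g(t)$ times the corresponding kernel integrals. This yields terms of the form $\int [K(t+h-u)-K(t-u)](g(u)-g(t))\,\mathrm{d}u$. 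Split the domain into $\{t-u\le 2h\}$ and $\{t-u>2h\}$.
\begin{itemize}
\item On the near part, use $|K(r)|\le C_0 r^{\rho-1}$ and $|g(u)-g(t)|\le C|t-u|^\eta$, giving $\int_0^{3h}r^{\rho-1+\eta}\,\mathrm{d}r\lesssim h^{\rho+\eta}$.
\item On the far part, use the mean value theorem with $|K'(r)|\le C_1r^{\rho-2}$, giving $h\int_{2h}^\infty r^{\rho-2+\eta}\,\mathrm{d}r\lesssim h^{\rho+\eta}$ because $\rho+\eta<1$.
\item The leftover terms $g(t)\bigl[\int_0^{t+h}K-\int_0^tK\bigr]$ are handled by $|g(t)|\le Ct^\eta$ together with $|\int_t^{t+h}K|\lesssim \min(h t^{\rho-1},h^\rho)$. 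Treating the subcases $t\le h$ and $t>h$ separately gives $\lesssim h^{\eta+\rho}$.
\end{itemize}

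\emph{Case $\eta+\rho>1$.} Here we show $\mathcal K g$ is differentiable with
\[
(\mathcal K g)'(t)=\int_0^t K'(t-u)\bigl(g(u)-g(t)\bigr)\,\mathrm{d}u+g(t)K(t).
\]
The integral converges since $r^{\rho-2+\eta}$ is integrable at $0$. To justify the formula, write $\mathcal K g(t)=\int_0^t K(t-u)(g(u)-g(t))\,\mathrm{d}u+g(t)\int_0^tK$. The term $g(t)K(t)$ is a priori singular at $t=0$, but $|g(t)K(t)|\lesssim t^{\eta+\rho-1}$, which is Hölder of order $\eta+\rho-1$. Its Hölder regularity on $(0,T]$ is checked with the same near/far split, using the bound on $K''$. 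For the integral term, the Hölder estimate of order $\eta+\rho-1$ repeats the previous case with $(K,\rho)$ replaced by $(K',\rho-1)$, using bounds on $K'$ and $K''$. The near/far integrals now converge because $\eta+\rho-2<-1<\eta+\rho-3+1$.

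\emph{Main obstacle.} The delicate step is the bookkeeping near $t=0$, where the kernel singularity and the endpoint interact. The hypothesis $g(0)=0$, and hence $|g(t)|\lesssim t^\eta$, is exactly what controls the boundary terms. The non-integer assumption $\eta+\rho\notin\mathbb N$ is used to avoid the logarithmic divergence $\int_{2h}r^{-1}\,\mathrm{d}r$ in the far-field integral.
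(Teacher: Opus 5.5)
Your proposal is correct and follows essentially the same route as the paper. The shared steps are: the reduction to $m=0$ via $(\mathcal K f)^{(j)}=\mathcal K f^{(j)}$; the three-term increment decomposition (kernel difference against $g(u)-g(t)$, the short piece on $[t,t+h]$, and $g(t)\int_t^{t+h}K$) with a near/far split at scale $h$ and the mean value theorem in the far region; and, for $\eta+\rho>1$, the same derivative $K(t)g(t)+\int_0^tK'(t-u)(g(u)-g(t))\,\mathrm{d}u$. Your displayed decomposition does not by itself justify differentiability, since $g(t)\int_0^tK$ is not differentiable for merely Hölder $g$; an approximation argument is needed, which the paper also only cites.

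Two small corrections. First, the exponent condition should read $\eta+\rho-3<-1<\eta+\rho-2$. Second, when you redo the estimate with $K'$, you must anchor the short piece on $[t,t+h]$ at $g(t+h)$ rather than $g(t)$. Anchoring at $g(t)$ leaves $|K'(t+h-u)|\,|g(t+h)-g(t)|$ near $u=t+h$, which is not integrable in general. The correct anchoring produces the extra term $(g(t+h)-g(t))(K(t+h)-K(h))=O(h^{\eta+\rho-1})$, exactly as in the paper's Case 2.
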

\begin{proof}
    Let us first consider the case $m=0$. Let $0 \le s < t \le T$. By adding and subtracting $f$ inside the integrals, we find 
    \begin{align*}
        \mathcal{K}f(t) - \mathcal{K}f(s) 
        &= \int_0^s [K(t-v) - K(s-v)][f(v) - f(s)]\,\mathrm{d}v 
        \\ &\quad + \int_s^t K(t-v)[f(v) - f(s)]\,\mathrm{d}v + f(s)\int_s^t K(u)\,\mathrm{d}u =: I_1 + I_2 + I_3.
    \end{align*}
    To simplify the notation, let us define $\Delta := t-s$.

    \textit{Case 1: $\eta+\rho < 1$.} 
    Since $f(0) = 0$, we find $|f(s)| \lesssim s^\eta \leq u^{\eta}$, and hence obtain
    \[
        |I_3| \lesssim C_0 \int_s^t u^{\eta+\rho-1}\,\mathrm{d}u \lesssim \frac{C_0}{\eta+\rho} (t^{\eta+\rho} - s^{\eta+\rho}) \lesssim \Delta^{\eta+\rho}.
    \]
    For $I_2$, we use the local Hölder bound $|f(v)-f(s)| \lesssim (t-s)^\eta$ for $v \in [s, t]$ to find
    \[
        |I_2| \lesssim \Delta^\eta \int_s^t (t-v)^{\rho-1}\,\mathrm{d}v \lesssim \Delta^{\eta+\rho}.
    \]
    For $I_1$, we split the domain of integration at $s-\Delta$. If $v \in [\max(0, s-\Delta), s]$, we use the triangle inequality $|K(t-v) - K(s-v)| \le |K(t-v)| + |K(s-v)|$ to obtain
    \[
        \int_{\max(0, s-\Delta)}^s \left(|K(t-v)| + |K(s-v)|\right) (s-v)^\eta \,\mathrm{d}v \lesssim \int_0^{\Delta \wedge s} u^{\rho+\eta-1}\,\mathrm{d}u \lesssim \Delta^{\eta+\rho}.
    \]
    If $s > \Delta$, then for $v \in [0, s-\Delta]$, we apply the Mean Value Theorem to find $|K(t-v) - K(s-v)| \lesssim \Delta (s-v)^{\rho-2}$. Integrating this against the Hölder bound for $f$ yields
    \[
        \int_0^{s-\Delta} \Delta(s-v)^{\rho+\eta-2}\,\mathrm{d}v \lesssim \Delta \int_{\Delta}^s u^{\rho+\eta-2}\,\mathrm{d}u \lesssim \Delta^{\eta+\rho}.
    \]
    Summing the bounds for $I_1, I_2$, and $I_3$ shows that $\mathcal{K}f \in C^{\eta+\rho}([0,T])$ when $\eta+\rho < 1$.

    \textit{Case 2: $1 < \eta+\rho < 2$.} In this case, let us define the function
    \[
        h(t) = K(t)f(t) + \int_0^t K'(t-v)[f(v) - f(t)]\,\mathrm{d}v,
    \]
    which is well-defined by the regularity of $f$ combined with \eqref{eq: smoothing kernel}. Let $\varepsilon \in (0,T)$ be arbitrary. By similar approximation arguments to \cite[Lemma 2.1]{BFK24}, one can verify that
    \[
        \mathcal{K}f(t) = \mathcal{K}f(\varepsilon) + \int_{\varepsilon}^t h(s)\, \mathrm{d}s, \qquad t \in [\varepsilon, T].
    \]
    Hence $\mathcal{K}f$ is absolutely continuous, and it suffices to show that $h \in C^{\eta + \rho - 1}([0,T])$. The increments satisfy
    \begin{align*}
        h(t) - h(s) &= K(t)f(t) - K(s)f(s) + \int_0^s (K'(t-v) - K'(s-v))(f(v) - f(s))\, \mathrm{d}v 
        \\ &\qquad + \int_s^t K'(t-v)(f(v) - f(t))\, \mathrm{d}v - (f(t) - f(s))\int_0^s K'(t-v)\, \mathrm{d}v.
    \end{align*}
    The second and third terms are identical to $I_1,I_2$ with $K$ replaced by $K'$, and hence have the desired regularity. For the first term, we obtain 
    \begin{align*}
        |K(t)f(t) - K(s)f(s)| &\le |K(t)|\,|f(t)-f(s)| + |f(s)|\,|K(t)-K(s)| 
        \\ &\lesssim t^{\rho-1} \Delta^{\eta} + s^{\eta}\int_s^t |K'(v)|\,\mathrm{d}v \lesssim \Delta^{\eta+\rho - 1},
    \end{align*}
    where we have used $|f(s)| \lesssim s^{\eta}$ and $s^{\eta}\int_s^t |K'(v)|\, \mathrm{d}v \leq \int_s^t v^{\eta}|K'(v)|\, \mathrm{d}v \lesssim \int_s^{t} v^{\eta + \rho - 2}\, \mathrm{d}v \lesssim \Delta^{\eta + \rho - 1}$ by \eqref{eq: smoothing kernel} and since $1 < \eta + \rho < 2$. The last term can be estimated by
    \begin{align*}
        |f(t) - f(s)|\left| \int_0^s K'(t-v)\, \mathrm{d}v\right| 
        &= |f(t)-f(s)| \left| \int_{t-s}^{t}K'(v)\, \mathrm{d}v\right| 
        \\ &\lesssim \Delta^{\eta} \int_{t-s}^{t} v^{\rho - 2}\, \mathrm{d}v
        \lesssim \Delta^{\eta} (\Delta^{\rho - 1} - t^{\rho - 1}) \leq \Delta^{\eta + \rho - 1}.
    \end{align*}
    Hence it has the desired regularity. 

    \textit{Case 3: Higher orders $m \ge 1$.} 
    Since $f^{(j)}(0) = 0$ for $j = 1,\dots, m$, we deduce that $\mathcal{K}f$ is $m$-times continuously differentiable with $(\mathcal{K}f)^{(m)}(t) = \int_0^t K(t-v)f^{(m)}(v)\, \mathrm{d}v$. Since $f^{(m)} \in C^{\eta}([0,T])$ by assumption, this reduces the problem to cases 1 and 2 with $m = 0$.
\end{proof}

While this lemma provides a powerful tool for global regularity, in the case of the Volterra Riccati equation, the presence of $K(t)u$, and more generally $K \ast \mu$ typically exclude any smoothness at $t = 0$. Below we provide an analogue that covers local smoothing of Volterra convolutions. 

\begin{Lemma}[Local smoothing by convolutions]
\label{thm:local-smoothing}
Let $T > 0$ and let $K: (0,T] \longrightarrow (0,\infty)$ be $C^\infty$ satisfying \eqref{eq: smoothing kernel}. Let $\delta\in(0,T)$. If $g\in C^{m,\eta}([\delta/2,T]) \cap L^1([0,T])$ for some integer $m\ge 0$ and some $\eta \in [0,1)$, then, assuming $\eta+\rho\notin\mathbb N$,
\[
\mathcal K g \in \begin{cases} C^{m,\eta+\rho}([\delta,T]), & \quad\text{if }\eta+\rho<1,
\\ C^{m+1,\eta+\rho-1}([\delta,T]), & \quad\text{if } \eta+\rho>1. \end{cases}
\]
\end{Lemma}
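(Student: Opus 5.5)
We localise with a smooth cutoff and then reduce to Lemma \ref{lem: global smoothing}. Fix $\chi\in C^\infty(\R)$ with $0\le\chi\le1$, $\chi\equiv0$ on $(-\infty,\delta/2+\delta/8]$ and $\chi\equiv1$ on $[3\delta/4,\infty)$, and split
\[
g=\chi g+(1-\chi)g=:g_1+g_2 .
\]
Then $\mathcal Kg=\mathcal Kg_1+\mathcal Kg_2$, and we treat the two pieces separately on $[\delta,T]$.

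\emph{The piece $g_1$.} The function $g_1$ vanishes identically on $[0,\delta/2+\delta/8]$. On $[\delta/2,T]$ it is a product of a smooth function with $g\in C^{m,\eta}$, so it lies in $C^{m,\eta}$ there. Consequently $g_1\in C^{m,\eta}([0,T])$ and all its derivatives up to order $m$ vanish at $0$. Lemma \ref{lem: global smoothing} then gives directly
\[
\mathcal Kg_1\in C^{m,\eta+\rho}([0,T])\quad\text{or}\quad \mathcal Kg_1\in C^{m+1,\eta+\rho-1}([0,T]),
\]
according to whether $\eta+\rho<1$ or $\eta+\rho>1$. In particular the same regularity holds on $[\delta,T]$.

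\emph{The piece $g_2$.} This is the integrable part, supported in $[0,3\delta/4]$. For $t\in[\delta,T]$,
\[
\mathcal Kg_2(t)=\int_0^{3\delta/4}K(t-s)g_2(s)\,\mathrm{d}s ,
\]
and here $t-s\ge\delta/4$. On $[\delta/4,T]$ the kernel $K$ is $C^\infty$, and by \eqref{eq: smoothing kernel} every derivative $K^{(j)}$ is bounded there by $C_j(\delta/4)^{\rho-1-j}$. Since $g_2\in L^1$, dominated convergence lets us differentiate under the integral arbitrarily often:
\[
(\mathcal Kg_2)^{(j)}(t)=\int_0^{3\delta/4}K^{(j)}(t-s)g_2(s)\,\mathrm{d}s .
\]
Each such derivative is bounded and Lipschitz on $[\delta,T]$. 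Hence $\mathcal Kg_2\in C^\infty([\delta,T])$, which certainly lies in both target spaces.

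\emph{Conclusion and main obstacle.} Adding the two pieces gives the claim. The only delicate point is making sure the hypotheses of Lemma \ref{lem: global smoothing} really hold for $g_1$. Because $\chi g$ is identically zero near $0$, the vanishing conditions $g_1^{(j)}(0)=0$ are automatic. We also need the $C^{m,\eta}$ norm of $g_1$ on $[0,T]$ to be finite. This follows from the Leibniz rule on $[\delta/2,T]$ together with $g_1\equiv0$ on $[0,\delta/2+\delta/8]$. The gluing at $\delta/2$ causes no trouble, since both pieces vanish on the overlap $[\delta/2,\delta/2+\delta/8]$. One small point to check is that Lemma \ref{lem: global smoothing} was proved with $\eta\in[0,1)$. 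The case $\eta=0$ is included there, and $\eta+\rho\notin\mathbb N$ is exactly our standing hypothesis, so nothing further is needed.
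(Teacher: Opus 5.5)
Your proof is correct, but it localises differently from the paper. The paper splits sharply at $a=\delta/2$. The far part $\int_0^a K(t-s)g(s)\,\mathrm{d}s$ is smooth on $[\delta,T]$ by differentiating under the integral, exactly as in your treatment of $g_2$. The near part $\int_a^t K(t-s)g(s)\,\mathrm{d}s$ is translated to $(\mathcal K\varphi)(t-a)$ with $\varphi=g(\cdot+a)$. Since $\varphi$ and its derivatives need not vanish at $0$, the paper subtracts the Taylor polynomial $P_m$ of $\varphi$ at $0$ and applies Lemma~\ref{lem: global smoothing} only to the remainder $\varphi-P_m$. It handles $\mathcal K P_m$ separately via $(\mathrm{d}/\mathrm{d}x)^{j+1}\int_0^x K(v)(x-v)^j\,\mathrm{d}v=j!K(x)$. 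This is smooth only for $x>0$, which is why the paper restricts to $x\in[a,T-a]$. Your smooth cutoff $\chi$ replaces both the translation and the Taylor subtraction. Because $\chi g$ vanishes identically near $0$, the vanishing hypotheses of Lemma~\ref{lem: global smoothing} hold automatically, and you even obtain the regularity of $\mathcal K(\chi g)$ on all of $[0,T]$. The price is checking that $\chi g\in C^{m,\eta}([0,T])$: the Leibniz rule on $[\delta/2,T]$ plus gluing the H\"older seminorm across $\delta/2$, which you handle correctly. Your version is arguably the cleaner of the two. The paper's avoids cutoff functions and makes explicit why the polynomial part of $g$ is harmless away from the splitting point.
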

\begin{proof}
Fix $\delta\in(0,T)$ and set $a:=\delta/2$. We split the convolution into $s\in(0,a)$ and $s\in[a,t]$, i.e.,
\[
(\mathcal K g)(t) = \int_0^a K(t-s)g(s)\, \mathrm{d}s + \int_a^t K(t-s)g(s)\, \mathrm{d}s =: J_0(t) + J_1(t),
\qquad t\in[\delta,T].
\]
Since $t-s\ge a$ for all $s\in(0,a)$ and $t\in[\delta,T]$, the map $[\delta, T] \ni t\longmapsto K(t-s)$ is $C^\infty$ for every $s \in (0,a)$, and all partial derivatives are uniformly bounded on $[\delta,T] \times(0,a)$. Hence, differentiation under the integral yields $J_0\in C^\infty([\delta,T])$.

It remains to prove regularity for $J_1$. Using the change of variables $u=s-a$, we may write
\[
J_1(t)=\int_0^{t-a} K(t-a-u) g(u+a)\, \mathrm{d}u.
\]
Thus, after translating the interval by $a$, it suffices to prove the claim for
\[
    (\mathcal K\varphi)(x) =\int_0^x K(x-u)\varphi(u)\,\mathrm{d}u,  \qquad x\in[0,T-a],
\]
with $\varphi := g(\cdot+a) \in C^{m,\eta}([0,T-a])$. Denote by $P_m$ its Taylor polynomial of degree $m$ at the origin given by
\[
    P_{m}(u):=\sum_{j=0}^{m}\frac{\varphi^{(j)}(0)}{j!}\,u^j,
    \qquad r(u):=\varphi(u)-P_{m}(u).
\]
Using the change of variables $v = x-u$, we obtain for each monomial 
\[
\left( \frac{\mathrm{d}}{\mathrm{d}x}\right)^{j+1}\int_0^x K(x-u)u^j\,\mathrm{d}u = \left( \frac{\mathrm{d}}{\mathrm{d}x}\right)^{j+1} \int_0^x K(v)(x-v)^j\,\mathrm{d}v = j!K(x).
\]
Since $K$ is smooth on $(0, \infty)$, by linearity, it follows that $\mathcal K P_{m}\in C^\infty([a,T-a])$.

It remains to investigate the remainder $\mathcal K r$. By construction of the Taylor polynomial, the remainder $r \in C^{m, \eta}([0, T-a])$ satisfies $r(0) = r'(0) = \dots = r^{(m)}(0) = 0$. Hence, an application of the global smoothing Lemma \ref{lem: global smoothing} gives 
\[
\mathcal K r\in
\begin{cases}
C^{m,\eta+\rho}([0,T-a]), & \eta+\rho<1,\\[2mm]
C^{m+1,\eta+\rho-1}([0,T-a]), & \eta+\rho>1.
\end{cases}
\]
and hence the same regularity on $[a,T-a]$ by restriction. This proves the desired regularity for $\mathcal{K}\varphi = \mathcal{K}P_m + \mathcal{K}r$ on $[a, T-a]$, and shifting the variable back to $t \in [\delta, T]$ establishes the asserted regularity for $J_1$. 
\end{proof}

The next proposition shows that completely monotone kernels that are regularly varying at zero satisfy condition \eqref{eq: smoothing kernel}. 

\begin{Proposition}\label{proposition: smoothing}
    Suppose that $K$ is completely monotone and regularly varying at $t = 0$, i.e.~\eqref{eq: regularly varying} holds for $\alpha \in (0,1)$. Then for any $\rho \in (0, \alpha)$, $K$ satisfies the derivative bounds~\eqref{eq: smoothing kernel}.
\end{Proposition}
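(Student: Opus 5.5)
We must show $|K^{(m)}(t)|\le C_m t^{\rho-1-m}$ on $(0,T]$ for every $\rho<\alpha$. The plan is to combine the Bernstein representation of completely monotone functions with a standard derivative estimate for completely monotone functions.

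\emph{Step 1: a derivative bound from complete monotonicity.} By Bernstein's theorem, $K(t)=\int_{[0,\infty)} e^{-xt}\,\nu(\mathrm{d}x)$ for a positive measure $\nu$. Then
\[
|K^{(m)}(t)| = \int x^m e^{-xt}\,\nu(\mathrm{d}x).
\]
Since $x^m e^{-xt/2}\le (2m/(et))^m$, we get
\[
|K^{(m)}(t)|\le \Big(\frac{2m}{e}\Big)^m t^{-m}\int e^{-xt/2}\,\nu(\mathrm{d}x) = c_m\, t^{-m} K(t/2),
\qquad t>0.
\]
(Alternatively, one may use that $-K'$ is completely monotone, hence nonincreasing, so $K(t/2)-K(t)\ge \tfrac{t}{2}|K'(t)|$, and iterate.) This reduces the problem to bounding $K(t/2)$.

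\emph{Step 2: regular variation.} Write $K(t/2)=\frac{(t/2)^{\alpha-1}}{\Gamma(\alpha)}\ell(t/2)$. Since $\ell$ is slowly varying at $0$, Potter's bounds (or simply the fact that $t^{\varepsilon}\ell(t)\to0$ as $t\downarrow0$ for every $\varepsilon>0$) give $\ell(t/2)\le C_\varepsilon t^{-\varepsilon}$ for $t\in(0,t_1]$. Choosing $\varepsilon=\alpha-\rho>0$ yields $K(t/2)\le C t^{\rho-1}$ on $(0,t_1]$.

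\emph{Step 3: extension to $(0,T]$.} On $[t_1,T]$, $K$ is continuous and positive, so $K(t/2)\le K(t_1/2)$, and $t^{\rho-1}$ is bounded below there; hence the bound extends to all of $(0,T]$ after enlarging the constant. Combining with Step 1 gives $|K^{(m)}(t)|\le C_m t^{\rho-1-m}$, and smoothness of $K$ on $(0,T]$ is automatic for completely monotone functions.

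\emph{Main obstacle.} The only delicate point is the uniform control of the slowly varying factor $\ell$ near zero, i.e.\ justifying $\ell(t)=O(t^{-\varepsilon})$. This follows from the representation theorem for slowly varying functions (Karamata). It is also the reason the statement requires $\rho<\alpha$ strictly rather than allowing $\rho=\alpha$.
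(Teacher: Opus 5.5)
Your proof is correct, but it takes a different route from the paper.

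The paper uses that each $|K^{(m)}| = (-1)^m K^{(m)}$ is monotone and applies the Monotone Density Theorem iteratively. This gives the exact asymptotics
\[
|K^{(m)}(t)| \sim \prod_{j=0}^{m-1}|\alpha-1-j|\, \frac{t^{\alpha-1-m}}{\Gamma(\alpha)}\,\ell(t), \qquad t \searrow 0 .
\]
Only then does the paper use Potter's bound $\ell(t) \lesssim t^{-\varepsilon}$ near zero, followed by continuity of $K^{(m)}$ on $[t_0,T]$.

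You instead use the Bernstein representation to get the non-asymptotic estimate $|K^{(m)}(t)| \le (2m/e)^m\, t^{-m} K(t/2)$ for all $t>0$. This reduces every derivative bound to one growth bound on $K$ itself. Regular variation then enters only through $\ell(t) = O(t^{-\varepsilon})$, and the extension to $[t_1,T]$ follows from monotonicity of $K$.

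What your route buys:
\begin{itemize}
\item It is more elementary, since it needs no Tauberian-type machinery applied at the origin.
\item It is more general: for any completely monotone kernel, the single growth condition $K(t) = O(t^{\rho-1})$ near zero already implies the whole family of bounds \eqref{eq: smoothing kernel}.
\end{itemize}

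What the paper's route buys: it gives strictly more information, namely sharp asymptotic equivalences for every derivative. The same computation is reused in the proof of Theorem~\ref{Theorem: upper bound rough} to get $t^nK^{(n)}(t)/K(t) \to \prod_{k=1}^n(\alpha-k)$, i.e.\ to show that $\ell$ is smoothly varying. Your one-sided bound would not give that.
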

\begin{proof}
    Because $K$ is completely monotone on $(0, \infty)$, $K \in C^\infty(0, \infty)$ and that its derivatives alternate, i.e.\ $(-1)^m K^{(m)}(t) \ge 0$ holds for all $m \geq 0$ and $t > 0$. Hence $|K^{(m)}(t)| = (-1)^m K^{(m)}(t)$ is a non-increasing (monotone) function. Applying iteratively the Monotone Density Theorem (see, \cite[Theorem 1.7.2]{BGT87}) to the asymptotics of $K$ at zero, we find 
    \[
        |K^{(m)}(t)| \sim \prod_{j=0}^{m-1} |\alpha-1-j| \ \frac{t^{\alpha-1-m}}{\Gamma(\alpha)} \ell(t), \qquad t \searrow 0, \ m \geq 0.
    \]        
    Let $\rho \in (0, \alpha)$. Define $\varepsilon := \alpha - \rho > 0$. By Potter's bounds for slowly varying functions (see \cite[Theorem 1.5.6]{BGT87}), for any $\varepsilon > 0$ there exists a constant $A > 0$ and a threshold $t_0 > 0$ such that $\ell(t) \le A \, t^{-\varepsilon}$ holds for all $t \in (0, t_0)$. Substituting this bound into the asymptotic equivalence, we find for $t \in (0, t_0)$
    \begin{align*}
        |K^{(m)}(t)| &\lesssim t^{\alpha-1-m} \ell(t) \lesssim t^{\alpha-1-m} t^{-\varepsilon} = t^{\rho-1-m}.
    \end{align*}
    This proves the assertion on $(0,t_0)$. Since $K$ is smooth on $(0, \infty)$, by adjusting the constant, this bound remains valid on any interval $[t_0, T]$, which proves the assertion.
\end{proof}

We now prove Theorem \ref{Theorem: regularity}, using a bootstrapping approach.

\begin{proof}[Proof of Theorem \ref{Theorem: regularity}]
    Let $f(t) := (K \ast \mu)(t)$ denote the source term and $H(t) := R(\psi(t; \mu))$ the nonlinear term. By \cite[Theorem 3.2]{FJ22}, we have $\psi \in L_{\mathrm{loc}}^2(\R_+; \C_-) \cap C((0,\infty); \C_-)$. Since~$R$ is a polynomial of degree two, it follows that $H \in L_{\mathrm{loc}}^1(\R_+; \C) \cap C((0,\infty); \C)$, and the Volterra equation \eqref{eq: Volterra Riccati measure} takes the form  
    \[
        \psi(t) = f(t) + (\mathcal{K} H)(t), \qquad t > 0.
    \]    
    Because $K$ is completely monotone and regularly varying, Proposition \ref{proposition: smoothing} shows that $K$ satisfies the bounds \eqref{eq: smoothing kernel} for the exponent $\rho = \alpha - \varepsilon$. By choosing $\varepsilon > 0$ sufficiently small, we may assume $\rho \in (1/2, \alpha)$. We now iteratively apply the local smoothing result of 
    Lemma~\ref{thm:local-smoothing} on arbitrary compact subintervals of $(0,\infty)$. 
    
    For the base case, since $H$ is continuous, Lemma \ref{thm:local-smoothing} guarantees that $\mathcal{K}H \in C_{\mathrm{loc}}^{\rho}((0, \infty))$. Consequently, the regularity of $\psi = f + \mathcal{K}H$ is at least the minimum of the regularities of~$f$ and~$\mathcal{K}H$.
    Because~$R$ is smooth, $H$ inherits this exact same improved regularity from~$\psi$. 
    
    For the inductive step, whenever $H \in C_{\mathrm{loc}}^{k, \beta}((0, \infty))$, Lemma \ref{thm:local-smoothing} guarantees that $\mathcal{K}H$ belongs to $C_{\mathrm{loc}}^{k, \beta+\rho}((0, \infty))$ (if $\beta+\rho < 1$) or $C_{\mathrm{loc}}^{k+1, \beta+\rho-1}((0, \infty))$ (if $\beta+\rho > 1$). Note that we may always choose $\varepsilon$ such that $j \rho \not \in \mathbb{N}$ for a given number of finite steps. After finitely many iterations, the regularity of the integral term $\mathcal{K}H$ will exceed $m + \eta$. At this point, the regularity of the sum $\psi = f + \mathcal{K}H$ is determined by its least smooth component, which by assumption is $f \in C^{m, \eta}((0,\infty); \C_-)$. 
\end{proof}

\section{Bounds on the Riccati Volterra equation}

\subsection{The rough case}

In this section, we construct the corresponding maximal solution of the Volterra Riccati equation associated with the Laplace transform of the Volterra square-root process, and study its asymptotics. For this purpose, let us define for $u \geq 0$
\begin{equation}\label{eq:def V_u}
    V_u(t) = -\psi(t; -u\delta_0) \geq 0
\end{equation}
with $\psi$ given as in \eqref{eq: Volterra Riccati measure}. One easily checks that $V_u$ is the unique solution to the Volterra equation
\begin{align}\label{eq: Laplace Volterra Riccati equation}
    V_u(t) = K(t)u + \int_0^t K(t-s)\left( \beta V_u(s) - \frac{\sigma^2}{2}V_u(s)^2\right)\, \mathrm{d}s.
\end{align}
In some cases, it is convenient to work with its mild formulation, which absorbs the linear term $\beta V_u$. Using the resolvent $E_{\beta}$ defined in \eqref{eq: Ebeta definition}, the variation of constants formula for Volterra equations shows that \eqref{eq: Laplace Volterra Riccati equation} is equivalent to
\begin{align*}
    V_u(t) = E_{\beta}(t)u - \frac{\sigma^2}{2}\int_0^t E_{\beta}(t-s)V_u(s)^2\, \mathrm{d}s.
\end{align*}
As a first step, we determine the asymptotics of $V_u$ at $t = 0$.

\begin{Lemma}[Asymptotics of $V_u$ at zero]\label{lemma: Vu asymptotics zero}
    Suppose $K$ is regularly varying of the form \eqref{eq: regularly varying} with $\alpha \in (1/2,1)$. Let $u > 0$, then 
    \[
        V_u(t) \sim \frac{u}{\Gamma(\alpha)} t^{\alpha-1}\ell(t), \qquad t \searrow 0.
    \]
\end{Lemma}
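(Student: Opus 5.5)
Since $K(t)u = \frac{u}{\Gamma(\alpha)}t^{\alpha-1}\ell(t)$ by \eqref{eq: regularly varying}, the claim is equivalent to
\[
\frac{(\mathcal{K}H)(t)}{K(t)} \longrightarrow 0 \quad \text{as } t \searrow 0,
\qquad\text{where } H(s) = \beta V_u(s) - \frac{\sigma^2}{2}V_u(s)^2 .
\]
So the plan is to show that the convolution term in \eqref{eq: Laplace Volterra Riccati equation} is of lower order than $K(t)u$.

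\textbf{Step 1: a priori bound.} First I would bound $V_u$ from above near zero. The solution satisfies $V_u \ge 0$ (it is $-\psi$ with $\psi \in \C_-$ real), and the quadratic term enters with a negative sign. Hence
\[
V_u(t) \le K(t)u + \beta^+ \int_0^t K(t-s)V_u(s)\,\mathrm{d}s .
\]
A Volterra comparison/Gronwall argument for this linear inequality, with the nonnegative resolvent $E_{\beta^+}$ of \eqref{eq: Ebeta definition}, gives $V_u \le uE_{\beta^+}$. The Neumann series $E_{\beta^+} = \sum_n (\beta^+)^n K^{*(n+1)}$ shows $E_{\beta^+}(t) \le C K(t)$ for $t \in (0,1]$. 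To see this, note that $K^{*(n+1)}$ is regularly varying of index $(n+1)\alpha - 1$, which is $o(K)$ for $n \ge 1$. Uniform constants come from Potter's bounds, or, more simply, from bounding $K$ on $(0,1]$ above and below by constant multiples of $t^{\alpha-1}\ell(t)$ and summing the resulting Mittag-Leffler-type series.

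An alternative that avoids resolvent positivity is direct comparison: any continuous positive supersolution dominates $V_u$ on $(0,t_1]$. I would use the resolvent route first, since positivity of $E_\beta$ is already asserted in the paper.

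\textbf{Step 2: the convolution is lower order.} With $V_u \le CK$ on $(0,1]$, I would bound
\[
|(\mathcal{K}H)(t)| \le C_1\int_0^t K(t-s)K(s)\,\mathrm{d}s + C_2\int_0^t K(t-s)K(s)^2\,\mathrm{d}s .
\]
The second integral is finite since $\alpha > 1/2$ gives $K^2 \in L^1_{\mathrm{loc}}$; this is exactly where the restriction $\alpha \in (1/2,1)$ enters. By Potter's bounds with small $\varepsilon>0$, on $(0,1]$ we have $K(s) \lesssim s^{\alpha-1-\varepsilon}$ and $K(t) \gtrsim t^{\alpha-1+\varepsilon}$. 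A Beta-integral computation then gives
\[
\int_0^t K(t-s)K(s)^2\,\mathrm{d}s \lesssim t^{3\alpha-2-3\varepsilon}.
\]
Dividing by $K(t)$ leaves $t^{2\alpha-1-4\varepsilon} \to 0$ once $\varepsilon < (2\alpha-1)/4$. The linear term similarly gives $t^{\alpha-3\varepsilon} \to 0$. Therefore $(\mathcal{K}H)(t) = o(K(t))$, and the asymptotic equivalence follows.

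\textbf{Main obstacle.} The delicate step is Step 1, establishing $V_u = O(K)$ near zero. A naive $L^2$ bound does not suffice, because of the singular forcing $K(t)u$ and the need for a pointwise bound. I would rely on the comparison with $uE_{\beta^+}$, using nonnegativity of the resolvent of a completely monotone kernel. I would also handle carefully the uniformity of Potter's bounds when estimating the Neumann series, or replace that series by a direct supersolution of the form $Mt^{\alpha-1}\ell(t)$ on a short interval.
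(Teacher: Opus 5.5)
Your proposal is correct and follows essentially the same route as the paper. Both arguments use a linear comparison $V_u \le u\,E$ with a resolvent $E$, a Neumann-series plus Potter estimate showing that this resolvent is $O(K)$ near zero, and then the fact that $\alpha>1/2$ makes the convolution remainder $o(K(t))$. The differences are technical. The paper works with the mild form and $E_\beta$, relying on $E_\beta\ge 0$, $E_\beta\sim K$, and the exact asymptotics of Lemma~\ref{lemma: convolution regularly varying}. You instead bound $\beta V_u\le \beta^+V_u$, so you only need the trivially nonnegative $E_{\beta^+}$ and the cruder estimate $E_{\beta^+}=O(K)$, with power-type Potter bounds in place of exact convolution asymptotics.
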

\begin{proof}
    Note that, for sufficiently small $t \in (0,t_0)$, the resolvent $E_{\beta}$ admits the Neumann series expansion $E_{\beta} = K + \sum_{j=1}^{\infty}\beta^j K^{\ast (j+1)}$. An application of Potter's bounds yield $|\ell(t)| \lesssim_{\varepsilon} t^{-\varepsilon}$ for given $\varepsilon > 0$, and hence $|K(t)|\lesssim_{\varepsilon} t^{\alpha - 1 - \varepsilon}$ on $(0,t_0)$. Hence the convolution is pointwise upper bounded by the convolution of the fractional kernel $t^{\alpha - 1 - \varepsilon}$, i.e.\ there exists some constant $C > 0$ such that
    \[
        |K^{\ast (j+1)}(t)| \lesssim_{\varepsilon} C^{j+1} \frac{\Gamma(\alpha-\varepsilon)^{j+1}}{\Gamma((j+1)(\alpha-\varepsilon))}t^{(j+1)(\alpha - \varepsilon) - 1}.
    \]
    Since the Gamma function grows super-exponentially, the series $\sum_{j=1}^{\infty}\beta^j K^{\ast (j+1)}$ is uniformly convergent on $(0,t_0)$. The dominant term  corresponds to $j=1$, and so the entire series is of order $\mathcal{O}(t^{2(\alpha-\varepsilon)-1})$. Since $2\alpha - 1 > \alpha - 1$, the series contributes only at higher orders, which yields $E_{\beta}(t) \sim K(t) \sim \frac{t^{\alpha-1}}{\Gamma(\alpha)}\ell(t)$.
    
    Since $V_u \geq 0$, the Volterra equation guarantees $0 \leq V_u(t) \leq E_\beta(t)u$ for all $t > 0$. We use this to bound the nonlinear term:
    \[
        0 \leq \int_0^t E_{\beta}(t-s) V_u(s)^2\, \mathrm{d}s \leq u^2 \int_0^t E_{\beta}(t-s) E_{\beta}(s)^2\, \mathrm{d}s \sim \frac{u^2 B(\alpha, 2\alpha-1)}{\Gamma(\alpha)^3} t^{3\alpha-2}\ell(t)^3,
    \]
    where the asymptotic equivalence follows from Lemma \ref{lemma: convolution regularly varying}. Because $\alpha > 1/2$, the exponent of this error term satisfies $3\alpha - 2 = (\alpha - 1) + (2\alpha - 1) > \alpha - 1$. Therefore, the nonlinear integral can be absorbed into $o(t^{\alpha-1}\ell(t))$, and vanishes asymptotically compared to the linear term. We conclude $V_u(t) \sim \frac{u}{\Gamma(\alpha)} t^{\alpha-1}\ell(t)$.
\end{proof}

A pair $(K,L)$ is called a Sonine pair if it satisfies $K \ast L = 1$, i.e.\ $L$ is the resolvent of the first kind for the Volterra kernel $K$. For such pairs, one can define the generalised Riemann-Liouville derivative by
\[
    D_Lf(t) = \frac{\mathrm{d}}{\mathrm{d}t} \int_{[0,t]} f(t-s)L(\mathrm{d}s), \qquad t > 0,
\]
whenever $f: (0,T] \longrightarrow \R$ is continuous and integrable on $[0,T]$ such that $L \ast f$ is absolutely continuous. For additional details we refer to \cite{math9060594}, while some complementary comparison theorems are collected in Appendix~\ref{se:comp}. 

Note that \eqref{eq: Laplace Volterra Riccati equation} can be rewritten as a fractional differential equation with respect to $D_L$. Convolving \eqref{eq: Laplace Volterra Riccati equation} with $L$ gives
\begin{align}\label{eq: L convolution V}
    (L\ast V_u)(t) = u + \int_0^t \left( \beta V_u(s) - \frac{\sigma^2}{2}V_u(s)^2\right)\, \mathrm{d}s, \qquad t > 0.
\end{align}
Since $V_u \in L_{\mathrm{loc}}^2(\R_+) \cap C^{\infty}(0,\infty)$ by Theorem \ref{Theorem: regularity}, it follows that $t \longmapsto (L \ast V_u)(t)$ is smooth on $(0,\infty)$. Its derivative satisfies the fractional Riccati differential equation on $(0,\infty)$ given by
\begin{align}\label{eq: fractional Riccati differential equation}
    (D_LV_u)(t) =  \beta V_u(t) - \frac{\sigma^2}{2} V_u(t)^2, \qquad \lim_{t \searrow 0}\ (L \ast V_u)(t) = u.
\end{align}
Below we derive upper bounds on $V_u$ uniformly in $u$. For this purpose we use a comparison theorem for \eqref{eq: fractional Riccati differential equation} established in Appendix B. The latter extends \cite{MR4389754} to our specific setting with initial conditions that are not necessarily regular in $t = 0$. 

\begin{Theorem}[Maximal Solution]\label{Theorem: upper bound rough}
    The mapping $u \longmapsto V_u(t)$ is nondecreasing for each $t > 0$. Define the maximal solution
    \[
        V_{\infty}(t) := \sup_{ u \geq 0} V_u(t).
    \]
    Then the following assertions hold:
    \begin{enumerate}
        \item[(a)] If $K$ is regularly varying of the form \eqref{eq: regularly varying} where $\alpha \in (1/2,1)$ and $\ell$ is slowly varying at zero, then for each $\eta \in (0,1)$ there exists $t_0 > 0$ such that
        \[
            V_{\infty}(t) \leq (1+\eta)\frac{2C_{\alpha}}{\sigma^2}\frac{t^{-\alpha}}{\ell(t)}, \qquad t \in (0,t_0),
        \]
        where the constant is given by
        \[
        C_{\alpha} = \frac{4^{\alpha}\sqrt{\pi}}{|\Gamma\left(\frac{1}{2} - \alpha\right)|} = \frac{\Gamma(1-\alpha)}{|\Gamma(1-2\alpha)|}.
        \]
        Moreover, the function $V_{\infty}$ satisfies the fractional Riccati differential equation
        \begin{equation}\label{eq:fRic infty} 
            (D_LV_{\infty})(t) = \beta V_{\infty}(t) - \frac{\sigma^2}{2}V_{\infty}(t)^2, \qquad t > 0.
        \end{equation}
        \item[(b)] For the pure fractional kernel $K(t) = \frac{t^{\alpha-1}}{\Gamma(\alpha)}$, we obtain 
        \[
            V_{\infty}(t) \leq \frac{2C_{\alpha}}{\sigma^2} t^{-\alpha} + \mathbbm{1}_{\beta > 0} \frac{2\beta}{\sigma^2}, \qquad t > 0.
        \]
        with equality when $\beta = 0$.
    \end{enumerate}
\end{Theorem}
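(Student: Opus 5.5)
Monotonicity in $u$ comes first, and I would get it from the comparison theorem for the generalised Riemann--Liouville equation \eqref{eq: fractional Riccati differential equation} in Appendix~\ref{se:comp}. For $u \le u'$, both $V_u$ and $V_{u'}$ solve $D_L V = \beta V - \frac{\sigma^2}{2}V^2$ on $(0,\infty)$. By Theorem~\ref{Theorem: regularity} (with $K\ast(-u\delta_0) = -uK$ smooth on $(0,\infty)$) they are smooth there. Their initial data satisfy $\lim_{t\searrow0}(L\ast V_u)(t)=u\le u'$. The comparison principle then gives $V_u\le V_{u'}$. An alternative route works with the mild form: the difference $w=V_{u'}-V_u$ solves a linear Volterra equation $w=E_\beta(u'-u)-\frac{\sigma^2}{2}E_\beta\ast\big((V_u+V_{u'})w\big)$, whose resolvent is nonnegative because $K$ is completely monotone. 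I would use this form if the singular initial data cause trouble in the appendix theorem. Either way, $V_\infty$ is well defined in $[0,\infty]$.

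For the upper bounds I would construct explicit supersolutions. In case (b), set $W(t)=\frac{2C_\alpha}{\sigma^2}t^{-\alpha}$. The constant $C_\alpha$ is chosen exactly so that $D^\alpha$ of $t^{-\alpha}$ reproduces $t^{-2\alpha}$ with the right factor. Since $L(\mathrm{d}t)=t^{-\alpha}\mathrm{d}t/\Gamma(1-\alpha)$, we get $L\ast t^{-\alpha}=\frac{\Gamma(1-\alpha)}{\Gamma(2-2\alpha)}t^{1-2\alpha}$, so $D_Lt^{-\alpha}=\frac{\Gamma(1-\alpha)}{\Gamma(1-2\alpha)}t^{-2\alpha}$. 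Because $\alpha>1/2$, $\Gamma(1-2\alpha)<0$, and hence $D_LW=-\frac{\sigma^2}{2}W^2$ exactly. Moreover $(L\ast W)(t)\to+\infty$ as $t\searrow0$, so $W$ dominates every initial value $u$. For $\beta\le0$, $W$ is a supersolution, since $\beta W\le 0$. For $\beta>0$, I would take $W+\frac{2\beta}{\sigma^2}$. Since $D_L$ of a constant $c$ is $c\,t^{-\alpha}/\Gamma(1-\alpha)\ge0$, one checks that $D_L(W+c)\ge \beta(W+c)-\frac{\sigma^2}{2}(W+c)^2$ when $c=2\beta/\sigma^2$: the cross term $-\sigma^2 cW$ absorbs $\beta W$, and $-\frac{\sigma^2}{2}c^2+\beta c=0$. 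Comparison then gives $V_u\le W+c$ for every $u$, and the bound on $V_\infty$ follows.

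For equality when $\beta=0$, I would show that $V_u\to W$. I expect this is best done via uniqueness of the solution of $D_LV=-\frac{\sigma^2}{2}V^2$ with infinite initial mass among functions bounded by $W$, or via scaling: $V_u(t)=u^{\kappa}V_1(u^{\kappa/\alpha}t)$ type self-similarity for the fractional kernel. With that scaling, $V_\infty(t)=t^{-\alpha}\lim_{s\to\infty}s^\alpha V_1(s)$, so $V_\infty=c\,t^{-\alpha}$. Passing to the limit in \eqref{eq:fRic infty} then forces $c=2C_\alpha/\sigma^2$.

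For (a), I would use the candidate $W_\eta(t)=(1+\eta)\frac{2C_\alpha}{\sigma^2}\frac{t^{-\alpha}}{\ell(t)}$ on $(0,t_0)$. Lemma~\ref{lemma: first kind asymptotics} and Lemma~\ref{lemma: convolution regularly varying} (Karamata-type asymptotics) give $L$ regularly varying with $L(t)\sim t^{-\alpha}/(\Gamma(1-\alpha)\ell(t))$. Then $D_LW_\eta\sim(1+\eta)\frac{2C_\alpha}{\sigma^2}\frac{\Gamma(1-\alpha)}{\Gamma(1-2\alpha)}\frac{t^{-2\alpha}}{\ell^2}$, while $-\frac{\sigma^2}{2}W_\eta^2=-(1+\eta)^2\frac{2C_\alpha^2}{\sigma^2}\frac{t^{-2\alpha}}{\ell^2}$. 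The extra factor $(1+\eta)$ gives a strict margin, which for small $t$ absorbs both the asymptotic errors and the lower-order term $\beta W_\eta=O(t^{-\alpha})$. The \textbf{main obstacle} is that comparison only holds on $(0,t_0)$, and $D_L$ is nonlocal, so differentiating the regularly varying asymptotics requires care. I would justify it with the monotone density theorem, using the complete monotonicity of $L_0$.

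Finally, \eqref{eq:fRic infty} is proved by monotone convergence in \eqref{eq: L convolution V} written on $[\varepsilon,t]$: $(L\ast V_u)(t)-(L\ast V_u)(\varepsilon)=\int_\varepsilon^t(\beta V_u-\frac{\sigma^2}{2}V_u^2)\,\mathrm{d}s$. The local bounds make all terms dominated, so we can pass to $u\to\infty$ and then differentiate.
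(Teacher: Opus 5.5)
Your proposal is correct and is essentially the paper's proof. The shared steps are:
\begin{enumerate}
\item the strict supersolution $M t^{-\alpha}/\ell(t)$ with $M>2C_\alpha/\sigma^2$, compared with $V_u$ via Theorem~\ref{thm:strict} on $(0,t_0)$;
\item monotone convergence in \eqref{eq: L convolution V} to get \eqref{eq:fRic infty};
\item scaling plus the Riccati identity at $t=1$ for $\beta=0$;
\item the supersolution $\frac{2C_\alpha}{\sigma^2}t^{-\alpha}+\frac{2\beta}{\sigma^2}$ for $\beta>0$.
\end{enumerate}
Using $W$ directly for $\beta<0$ is a harmless shortcut past the paper's Volterra comparison with the $\beta=0$ solution. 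Note also that the hypothesis of Theorem~\ref{thm:strict} is the pointwise condition $\liminf_{t\searrow 0}(f_2-f_1)>0$. It comes from $V_u\sim uK$ in Lemma~\ref{lemma: Vu asymptotics zero} together with $\alpha>1/2$, not from $L\ast W\to\infty$.

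The one step to repair is monotonicity in $u$, because the appendix theorems do not apply directly. $V_u$ and $V_{u'}$ both solve the equation with equality, so Theorem~\ref{thm:strict} is unavailable. Theorem~\ref{thm:nonstrict} requires $y\mapsto -\beta y+\frac{\sigma^2}{2}y^2$ to be nondecreasing, which fails on $[0,\beta/\sigma^2)$ when $\beta>0$. Your fallback is the right route, but run it as the paper does, on the $K$-form $\chi=K(u'-u)+K\ast\big((\beta-\frac{\sigma^2}{2}(V_u+V_{u'}))\chi\big)$ with \cite[Theorem C.2]{MR4019885}. The $E_\beta$-form does not work for this, because $E_\beta$ is in general not completely monotone for $\beta>0$.
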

\begin{proof}
    To prove the monotonicity in $u$, let $u_1 \leq u_2$ and set $\chi(t) = V_{u_2}(t) - V_{u_1}(t)$. This function satisfies the Volterra equation $\chi(t) = K(t)(u_2 - u_1) + \int_0^t K(t-s)\beta \chi(s)\, \mathrm{d}s - \frac{\sigma^2}{2}\int_0^t K(t-s)(V_{u_2}(s) + V_{u_1}(s))\chi(s)\, \mathrm{d}s$. Since $K$ is completely monotone, and $u_2 - u_1 \geq 0$, an application of \cite[Theorem C.2]{MR4019885} gives $\chi \geq 0$ and thus $V_{u_1} \leq V_{u_2}$. 

    (a) For a given $M > 2C_{\alpha}/ \sigma^2$, let us define $f_{\alpha}(t) = M \frac{t^{-\alpha}}{\ell(t)}$. Lemma~\ref{lemma: first kind asymptotics}~(a) gives $L(t) \sim \frac{1}{\Gamma(1-\alpha)}\frac{t^{-\alpha}}{\ell(t)}$ and hence, by Potter's bound, $L$ satisfies condition \eqref{eq: appendix 1} with $\gamma = 1$. By Lemma~\ref{thm:local-smoothing}, $(L \ast f_{\alpha})(t)$ is smooth on $(0,\infty)$ and hence $f_{\alpha}$ belongs to $S_{L, 1}$ in the notation of Appendix B. Moreover, an application of Lemma \ref{lemma: convolution regularly varying} provides the asymptotics as $t \searrow 0$ 
    \begin{align*}
        (L \ast f_{\alpha})(t) \sim M \frac{\Gamma(1-\alpha)}{(1 - 2\alpha)\Gamma(1-2\alpha)}\frac{t^{1-2\alpha}}{\ell(t)^{2}}.
    \end{align*}
    Since $K$ is regularly varying at $t = 0$, it has the form $K(t) = \frac{t^{\alpha - 1}}{\Gamma(\alpha)}\ell(t)$. Since $K$ is completely monotone and regularly varying, an application of the Monotone Density Theorem shows that $t^n K^{(n)}(t)/K(t) \longrightarrow \prod_{k=1}^{n} (\alpha - k)$. Hence $K$ is smoothly varying. Since $t^{1 - \alpha}$ is smoothly varying, also the product $\ell(t) = \Gamma(\alpha)t^{1-\alpha}K(t)$ is smoothly varying in the sense that 
    \[
        \lim_{t \searrow 0}\frac{t^n \ell^{(n)}(t)}{\ell(t)} = 0, \qquad \forall n \geq 1.
    \]
    Hence also $1/\ell$ is smoothly varying. Moreover, since $L$ is completely monotone and regularly varying with index $-\alpha$ (see Lemma \ref{lemma: first kind asymptotics}), it can be written as $L(t) = t^{-\alpha} \widetilde{\ell}(t)$, where $\widetilde{\ell}$ is smoothly varying by the same argument as given for $\ell$. Hence Lemma \ref{lemma: convolution derivative asymptotics} is applicable, and yields
    \[
        (D_L f_{\alpha})(t) \sim M \frac{\Gamma(1-\alpha)}{\Gamma(1-2\alpha)}  t^{-2\alpha} \ell(t)^{-2} = - M C_{\alpha} t^{-2\alpha}\ell(t)^{-2},
    \]
    where the last equality follows from $C_{\alpha} = -\frac{\Gamma(1-\alpha)}{\Gamma(1-2\alpha)}$ due to Legendre's Duplication Formula. Hence, noting that $-\beta f_{\alpha}(t)$ has only higher-order contributions, we obtain    
    \[
        (D_L f_{\alpha})(t) - \beta f_{\alpha}(t) + \frac{\sigma^2}{2}f_{\alpha}(t)^2 \sim M\left[ \frac{\sigma^2}{2} M  - C_{\alpha} \right] t^{-2\alpha} \ell(t)^{-2}.
    \]
    Since $M > \frac{2C_{\alpha}}{\sigma^2}$, the right-hand side is strictly positive for small $t$. Hence, we find $t_0 > 0$ such that for $t \in (0,t_0)$ 
    \[
        (D_LV_u)(t) - \beta V_u(t) + \frac{\sigma^2}{2}V_u(t)^2 = 0 < (D_L f_{\alpha})(t) - \beta f_{\alpha}(t) + \frac{\sigma^2}{2}f_{\alpha}(t)^2,
    \]
    where the equality stems from \eqref{eq: fractional Riccati differential equation}. Further, using $V_u(t) \sim \frac{u t^{\alpha-1}}{\Gamma(\alpha)}\ell(t)$ by Lemma \ref{lemma: Vu asymptotics zero}, we find $f_{\alpha}(t) - V_u(t) \sim M \frac{t^{-\alpha}}{\ell(t)} - \frac{t^{\alpha-1}}{\Gamma(\alpha)}\ell(t) u > 0$ for sufficiently small $t$ since $\alpha > 1 - \alpha$. An application of Theorem \ref{thm:strict} yields $V_u(t) \leq f_{\alpha}(t)$ for $t \in (0,t_0)$. Taking the limit $u \nearrow \infty$, proves $V_{\infty}(t) \leq f_{\alpha}(t)$. 

    Next, we show that $V_\infty$ solves the fractional Riccati differential equation. Fix $t > 0$ and let $\varepsilon \in (0,t\wedge t_0)$. Using \eqref{eq: L convolution V}, we find 
    \[
        (L \ast V_u)(t) - (L \ast V_u)(\varepsilon) = \int_{\varepsilon}^t \left( \beta V_u(s) - \frac{\sigma^2}{2} V_u(s)^2 \right) \, \mathrm{d}s.
    \]
    Since $u \mapsto V_u(s)$ is monotonically increasing to $V_\infty(s)$, the Monotone Convergence Theorem implies $\lim_{u \to \infty} (L \ast V_u)(x) = (L \ast V_\infty)(x) < \infty$ for any $x > 0$ with the limit being finite due to the previously shown bound $V_{\infty} \leq f_{\alpha}$. On the right-hand side, since $V_u \leq V_\infty$ and $V_\infty$ is bounded on the compact interval $[\varepsilon, t]$, we may apply the Dominated Convergence Theorem to pass the limit $u \to \infty$ inside the integral. This gives
    \begin{align}\label{eq: LVinfty}
        (L \ast V_\infty)(t) - (L \ast V_\infty)(\varepsilon) = \int_\varepsilon^t \left( \beta V_\infty(s) - \frac{\sigma^2}{2} V_\infty(s)^2 \right) \, \mathrm{d}s.
    \end{align}
    Differentiating with respect to $t \geq \varepsilon$, and noting that $\varepsilon > 0$ was arbitrary, proves the assertion. 
    
    (b) Let us first consider the case $\beta = 0$. In this case, the Riccati-Volterra equation for $u > 0$ is given by 
    \begin{equation*}
        V_u(t) = u \frac{t^{\alpha-1}}{\Gamma(\alpha)} - \frac{\sigma^2}{2} \int_0^t \frac{(t-s)^{\alpha-1}}{\Gamma(\alpha)} V_u(s)^2 \, \mathrm{d}s.
    \end{equation*}
    Scaling the time by $\lambda > 0$, i.e.\ $t = \lambda x$, and using the change of variables $s = \lambda \tau$ (hence $\mathrm{d}s = \lambda \, \mathrm{d}\tau$), we get
    \begin{align*}
        V_u(\lambda x) &= u \frac{(\lambda x)^{\alpha-1}}{\Gamma(\alpha)} - \frac{\sigma^2}{2} \int_0^x \frac{(\lambda x - \lambda \tau)^{\alpha-1}}{\Gamma(\alpha)} V_u(\lambda \tau)^2 \lambda \, \mathrm{d}\tau
        \\ &= u \frac{\lambda^{\alpha-1} x^{\alpha-1}}{\Gamma(\alpha)} - \frac{\sigma^2}{2} \lambda^\alpha \int_0^x \frac{(x-\tau)^{\alpha-1}}{\Gamma(\alpha)} V_u(\lambda \tau)^2 \, \mathrm{d}\tau,
    \end{align*}
    Multiplying both sides by $\lambda^{\alpha}$, and setting $W_u^\lambda(x) := \lambda^\alpha V_u(\lambda x)$, shows that 
    \begin{equation} \label{eq:scaled_W}
        W_u^\lambda(x) = u \lambda^{2\alpha-1} \frac{x^{\alpha-1}}{\Gamma(\alpha)} - \frac{\sigma^2}{2} \int_0^x \frac{(x-\tau)^{\alpha-1}}{\Gamma(\alpha)} W_u^\lambda(\tau)^2 \, \mathrm{d}\tau.
    \end{equation}
    By uniqueness, we find $\lambda^\alpha V_u(\lambda t) = W_u^{\lambda}(t) = V_{u \lambda^{2\alpha -1}}(t)$. Taking the limit $u \to \infty$ with $t,\lambda$ fixed shows that the maximal solution satisfies the scaling property $\lambda^{\alpha}V_{\infty}(\lambda t) = V_{\infty}(t)$. Setting $t = 1$ gives $V_{\infty}(\lambda) = V_{\infty}(1)\lambda^{-\alpha}$. It remains to compute the constant $V_{\infty}(1)$. 

    For this purpose, we note that $(D_LV_{\infty})(1) = - \frac{\sigma^2}{2}V_{\infty}(1)^2$ by part (a). The left-hand side is given by
    \begin{align*}
        (D_LV_{\infty})(t) &= \frac{\mathrm{d}}{\mathrm{d}t} (L \ast V_{\infty})(t) \\
        &= \frac{\mathrm{d}}{\mathrm{d}t} \left( \int_0^t \frac{(t-s)^{-\alpha}}{\Gamma(1-\alpha)} V_{\infty}(1) s^{-\alpha} \, \mathrm{d}s \right) \\
        &= V_{\infty}(1) \frac{\mathrm{d}}{\mathrm{d}t} \left( \frac{t^{1-2\alpha}}{\Gamma(1-\alpha)} \int_0^1 (1-y)^{-\alpha} y^{-\alpha} \, \mathrm{d}y \right) \\
        &= V_{\infty}(1) \frac{\mathrm{d}}{\mathrm{d}t} \left( \frac{t^{1-2\alpha}}{\Gamma(1-\alpha)} B(1-\alpha, 1-\alpha) \right) = - C_\alpha V_{\infty}(1) t^{-2\alpha}.
    \end{align*}
    Hence at $t = 1$, we get $-C_{\alpha} V_{\infty}(1) = (D_LV_{\infty})(1) = - \frac{\sigma^2}{2}V_{\infty}(1)^2$, and hence $V_{\infty}(1) = 2C_{\alpha}/\sigma^2$. 

    Now, suppose $\beta < 0$, and denote by $V_u^{(0)}$ the unique solution of \eqref{eq: Laplace Volterra Riccati equation} with $\beta = 0$. Define $\chi(t) = V_u^{(0)}(t) - V_u(t)$. Then
    \[
        \chi(t) = |\beta| \int_0^t K(t-s)V_u(s)\, \mathrm{d}s - \frac{\sigma^2}{2} \int_0^t K(t-s)\left(V_u^{(0)}(s) + V_u(s)\right)\chi(s)\, \mathrm{d}s.
    \]
    An application of \cite[Theorem C.2]{MR4019885} gives $\chi \geq 0$, and hence $V_u(t) \leq V_u^{(0)}(t)$. Thus we obtain $V_{\infty}(t) \leq V_{\infty}^{(0)}(t) = \frac{2C_{\alpha}}{\sigma^2} t^{-\alpha}$ for all $t > 0$.

    Finally, suppose $\beta > 0$. We construct a global supersolution by adding the stationary limit $c = \frac{2\beta}{\sigma^2}$ of the Volterra Riccati equation. Namely, define $g(t) = \frac{2C_{\alpha}}{\sigma^2} t^{-\alpha} + c = V_{\infty}^{(0)}(t) + c$. Noting that $(D_L c)(t) = c L_0(t)$ and $(D_L V_{\infty}^{(0)})(t) = -\frac{\sigma^2}{2}V_{\infty}^{(0)}(t)^2$ as shown for the case $\beta = 0$, we obtain
    \begin{align*}
        (D_L g)(t) - \beta g(t) + \frac{\sigma^2}{2}g(t)^2
        &= -\frac{\sigma^2}{2}V_{\infty}^{(0)}(t)^2 + c L_0(t) - \beta(V_{\infty}^{(0)}(t) + c) 
        \\ &\qquad + \frac{\sigma^2}{2}\left( V_{\infty}^{(0)}(t)^2 + 2c V_{\infty}^{(0)}(t) + c^2 \right) 
        \\ &= c L_0(t) - \beta V_{\infty}^{(0)}(t) - \beta c + \frac{\sigma^2}{2} (2c) V_{\infty}^{(0)}(t) + \frac{\sigma^2}{2}c^2
        \\ &= \frac{2\beta}{\sigma^2} L_0(t) + \beta V_{\infty}^{(0)}(t) > 0
    \end{align*}
    for each $t > 0$ since $V_{\infty}^{(0)}(t) > 0$ for all $t > 0$. An application of the comparison principle Theorem \ref{thm:strict} yields $V_{\infty}(t) \leq g(t) = \frac{2C_{\alpha}}{\sigma^2} t^{-\alpha} + \frac{2\beta}{\sigma^2}$ globally.
\end{proof}

Below we focus on a sharp lower bound for the maximal solution $V_{\infty}$. Using Lemma \ref{lemma: Vu asymptotics zero}, we obtain for each $\eta \in (0,1)$ the lower bound $V_{\infty}(t) \geq V_u(t) \geq (1-\eta)K(t)u = (1-\eta) \frac{t^{\alpha-1}}{\Gamma(\alpha)}\ell(t)u$ for sufficiently small $t \in (0,t_0)$. To match it with its upper bound, we require that (up to constants) $\frac{t^{-\alpha}}{\ell(t)} \sim t^{\alpha-1}\ell(t)u$. This gives $u \sim t^{1-2\alpha} \ell(t)^{-2}$. However, then the lower bound is valid only on $(0,t_0)$ and $t_0 = t_0(u)$ depends on $u$. Our next theorem tracks the dependence of $t_0(u)$ on $u$ and hence provides a rigorous justification of this argument.

\begin{Theorem}[lower bound on the maximal solution]\label{Theorem: lower bound rough}
    Suppose that $K$ is regularly varying of the form \eqref{eq: regularly varying} where $\alpha \in (1/2,1)$ and $\ell$ is slowly varying at zero. Then for each $\eta \in (0,1)$ there exists $t_0 > 0$ such that
    \[
        V_{\infty}(t) \geq (1-\eta)\frac{A_{\alpha}}{2\sigma^2} \frac{t^{-\alpha}}{\ell(t)}, \qquad t \in (0,t_0)
    \]
    where the constant $A_{\alpha}$ is given by
    \[
        A_{\alpha} = \frac{\Gamma(\alpha)}{B(\alpha, 2\alpha-1)} = \frac{\Gamma(3\alpha-1)}{\Gamma(2\alpha-1)}.
    \]
\end{Theorem}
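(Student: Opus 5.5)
We bound $V_\infty(t)\ge V_u(t)$ for a suitably chosen $u=u(t)$, using the mild formulation and the a priori bound $V_u\le E_\beta u$. From the variation of constants formula,
\[
V_u(t) = E_\beta(t)u - \frac{\sigma^2}{2}\int_0^t E_\beta(t-s)V_u(s)^2\,\mathrm{d}s \;\ge\; E_\beta(t)u - \frac{\sigma^2}{2}u^2\int_0^t E_\beta(t-s)E_\beta(s)^2\,\mathrm{d}s .
\]
By the first part of the proof of Lemma \ref{lemma: Vu asymptotics zero}, $E_\beta(t)\sim K(t)=\frac{t^{\alpha-1}}{\Gamma(\alpha)}\ell(t)$ as $t\searrow0$. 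Lemma \ref{lemma: convolution regularly varying} then gives
\[
\int_0^t E_\beta(t-s)E_\beta(s)^2\,\mathrm{d}s\sim \frac{B(\alpha,2\alpha-1)}{\Gamma(\alpha)^3}\,t^{3\alpha-2}\ell(t)^3 .
\]
The key point is that this lower bound holds for \emph{every} $u$ and every $t$ simultaneously. Only the asymptotics of the $u$-independent functions $E_\beta$ and $E_\beta\ast E_\beta^2$ enter, so no threshold $t_0(u)$ has to be tracked. This removes the difficulty described before the theorem.

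Fix $\eta$ and choose $t_0$ such that on $(0,t_0)$ we have $E_\beta(t)\ge(1-\delta)K(t)$ and $(E_\beta\ast E_\beta^2)(t)\le (1+\delta)\frac{B(\alpha,2\alpha-1)}{\Gamma(\alpha)^3}t^{3\alpha-2}\ell(t)^3$, with $\delta=\delta(\eta)$ small. Write $a(t)=(1-\delta)\frac{t^{\alpha-1}\ell(t)}{\Gamma(\alpha)}$ and $b(t)=(1+\delta)\frac{\sigma^2}{2}\frac{B(\alpha,2\alpha-1)}{\Gamma(\alpha)^3}t^{3\alpha-2}\ell(t)^3$. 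For each fixed $t\in(0,t_0)$ this yields $V_\infty(t)\ge \sup_{u\ge0}\big(a(t)u-b(t)u^2\big)=a(t)^2/(4b(t))$, attained at $u=a/(2b)\propto t^{1-2\alpha}\ell(t)^{-2}$, consistent with the heuristic preceding the theorem. Computing,
\[
\frac{a^2}{4b}=\frac{(1-\delta)^2}{(1+\delta)}\cdot\frac{t^{2\alpha-2}\ell^2/\Gamma(\alpha)^2}{2\sigma^2 B(\alpha,2\alpha-1)t^{3\alpha-2}\ell^3/\Gamma(\alpha)^3}
=\frac{(1-\delta)^2}{1+\delta}\,\frac{\Gamma(\alpha)}{2\sigma^2B(\alpha,2\alpha-1)}\,\frac{t^{-\alpha}}{\ell(t)},
\]
which is exactly $\frac{A_\alpha}{2\sigma^2}\frac{t^{-\alpha}}{\ell(t)}$ up to the factor $(1-\delta)^2/(1+\delta)\ge1-\eta$. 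The identity $\Gamma(\alpha)/B(\alpha,2\alpha-1)=\Gamma(3\alpha-1)/\Gamma(2\alpha-1)$ is immediate from $B=\Gamma(\alpha)\Gamma(2\alpha-1)/\Gamma(3\alpha-1)$.

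The main obstacle is rigour in the two asymptotic inputs. First, $E_\beta\sim K$ requires the Neumann-series/Potter argument already given. Second, the convolution asymptotics for $E_\beta\ast E_\beta^2$ must be justified, since Lemma \ref{lemma: convolution regularly varying} is stated for regularly varying functions and we apply it to $E_\beta$, which is only asymptotically equivalent to $K$. To handle this, I would sandwich $E_\beta$ between $(1\pm\delta)K$ on $(0,t_0)$. Because all integrands are nonnegative and the integration range in $E_\beta\ast E_\beta^2(t)$ lies within $(0,t)\subset(0,t_0)$, this gives $E_\beta\ast E_\beta^2\le(1+\delta)^3K\ast K^2$, and the lemma then applies directly to $K\ast K^2$. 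Positivity of $E_\beta$ (completely monotone, hence nonnegative, for small $t$) is used to justify both $V_u\le E_\beta u$ and this sandwich.
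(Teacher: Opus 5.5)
Your proof is correct, and it takes a genuinely different and much shorter route than the paper.

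\textbf{The paper's route.} The paper stays in the $K$-formulation. For each large $u$ it introduces a $u$-dependent window $I_u=(0,t_u]$ via the generalised inverse $Mt_u^{1-2\alpha}\ell(t_u)^{-2}=u$. Using Potter bounds on $\ell(t)/\ell(t_u)$, it shows that $G_u$ leaves the weighted set $\mathcal{U}_u=\{c_1u\le t^{1-\alpha}f(t)/\ell(t)\le c_2u\}$ invariant and is a contraction there. It then identifies the fixed point with $V_u$ by uniqueness and evaluates along $u(t)=Mt^{1-2\alpha}\ell(t)^{-2}$. Finally it maximises the concave quadratic $M\mapsto c_1M$ and lets $c_2\searrow 1/\Gamma(\alpha)$.

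\textbf{Your route.} You insert the a priori bound $0\le V_u\le E_\beta u$, already used in the proof of Lemma~\ref{lemma: Vu asymptotics zero}, into the mild formulation. This gives $V_u(t)\ge a(t)u-b(t)u^2$ with $u$-independent coefficients on a $u$-independent interval $(0,t_0)$. The dependence $t_0=t_0(u)$ therefore never arises, and your pointwise optimisation over $u$ plays the role of the paper's optimisation over $M$.

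\textbf{Why the constants agree.} They coincide exactly, because the paper's limit $c_2\searrow 1/\Gamma(\alpha)$ is precisely the bound $V_u\lesssim Ku$ that you use from the start.

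\textbf{Inputs needed.} Your argument only requires three things:
\begin{itemize}
\item $E_\beta\sim K$ at zero, from the Neumann series.
\item Positivity of $E_\beta$ on $(0,t_0)$, which already follows from $E_\beta\sim K>0$. This suffices because a convolution evaluated at $t<t_0$ only sees arguments in $(0,t)$, so global complete monotonicity is not needed.
\item Lemma~\ref{lemma: convolution regularly varying} applied to $K\ast K^2$ with indices $\alpha$ and $2\alpha-1\in(0,1)$.
\end{itemize}

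\textbf{What each approach buys.} The paper's fixed-point construction additionally produces explicit two-sided bounds $c_1uK\lesssim V_u\lesssim c_2uK$ on the whole window $I_u$. Your inequality recovers the same information on the set where $ut^{2\alpha-1}\ell(t)^2$ is small, without the Banach-space and contraction machinery.

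\textbf{Bookkeeping.} The factor in $b(t)$ should be $(1+\delta)^3$ from the sandwich times a further $(1+\delta)$ from the convolution asymptotics. This is harmless, since $\delta$ is arbitrary.
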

\begin{proof}
    Recall that, since $K(t) = \frac{t^{\alpha-1}}{\Gamma(\alpha)}\ell(t)$ is completely monotone and not identically zero, $\ell > 0$ is smooth on $(0,\infty)$. For each $u > 0$ and a constant $M > 0$ independent of~$u$ to be chosen later, let us define $t_u > 0$ as the generalised inverse
    \[
        t_u = \sup\{ t \in (0,1] \ : \ M t^{1 - 2\alpha}\ell(t)^{-2} \geq u \} \in (0,1].
    \]
    Since $(0,1] \ni t \longmapsto M t^{1-2\alpha}\ell(t)^{-2}$ is regularly varying at zero with index $1 - 2\alpha<0$, it diverges as $t \searrow 0$. Thus, $t_u$ is well-defined, by continuity and the intermediate-value theorem satisfies $M t_u^{1-2\alpha} \ell(t_u)^{-2} = u$ for sufficiently large $u$, and hence $t_u \searrow 0$ as $u \nearrow \infty$. In particular, we have the property
    \begin{align}\label{eq: t0 tu relation}
        \forall t_0 \in (0,1) \ \exists u_0 > 0 \text{ large enough such that } t_u < t_0, \qquad \forall u \geq u_0.
    \end{align}
    Set $I_u = (0,t_u]$ and define the Banach space
    \[
        \mathcal{W}_u = \left\{ f \in C(I_u) \ : \ \exists \lim_{t \searrow 0}\frac{t^{1-\alpha}}{\ell(t)}f(t) \right\} \ \text{ with } \ \| f\|_{u} = \sup_{t \in I_u} \left| \frac{t^{1-\alpha}}{\ell(t)}f(t)\right|.
    \]
    Since pointwise bounds are preserved by pointwise convergence, for any fixed constants $0 < c_1 < 1/\Gamma(\alpha) < c_2$ the set 
    \[
        \mathcal{U}_u = \left\{ f \in \mathcal{W}_u \ : \ c_1 u \leq \frac{t^{1-\alpha}}{\ell(t)}f(t) \leq c_2 u \right\}
    \]
    is a closed subset of $\mathcal{W}_u$, and hence a complete metric space. Define the mapping 
    \[
        G_u(f)(t) = K(t)u + \int_0^t K(t-s)\left( \beta f(s) - \frac{\sigma^2}{2}f(s)^2 \right)\, \mathrm{d}s.
    \]
    Below we prove the existence of $u_0 > 0$ such that $G_u$ is a contraction on $\mathcal{U}_u$ for $u \geq u_0$. Here and below we let $t_0 > 0$ be some small constant that will be chosen sufficiently small at each step, and $u_0$ is the corresponding value determined by \eqref{eq: t0 tu relation}.

    \textit{Step 1. Invariance $G_u(\mathcal{U}_u) \subseteq \mathcal{U}_u$}. 
    Let $f \in \mathcal{U}_u$, then $f(s) \leq c_2 u s^{\alpha-1}\ell(s)$. Using the standard asymptotic properties of convolutions of regularly varying functions (see Lemma \ref{lemma: convolution regularly varying}), for any small $\varepsilon > 0$, there exists $t_0 > 0$ small enough, and by \eqref{eq: t0 tu relation} $u_0$ large enough such that for all $u \geq u_0$
    \begin{align*}
        G_u(f)(t) &\leq \frac{t^{\alpha-1}}{\Gamma(\alpha)}\ell(t)u + c_2 u |\beta| \int_0^t \frac{(t-s)^{\alpha-1}}{\Gamma(\alpha)}\ell(t-s) s^{\alpha-1}\ell(s)\, \mathrm{d}s
        \\ &\leq \frac{t^{\alpha - 1}}{\Gamma(\alpha)}\ell(t)u + (1+\varepsilon) c_2 u |\beta| \frac{B(\alpha, \alpha)}{\Gamma(\alpha)} t^{2\alpha-1}\ell(t)^2
        \\ &= u t^{\alpha-1}\ell(t) \left[ \frac{1}{\Gamma(\alpha)} + |\beta| c_2 (1+\varepsilon) \frac{B(\alpha, \alpha)}{\Gamma(\alpha)} t^\alpha \ell(t) \right].
    \end{align*}
    Since $1/\Gamma(\alpha) < c_2$ and $t^\alpha \ell(t) \to 0$ as $t \searrow 0$, we can adjust $t_0$ smaller and hence $u_0$ larger such that the bracket satisfies for $t \in I_u$
    \[ 
        \frac{1}{\Gamma(\alpha)} + |\beta| c_2 (1+\varepsilon) \frac{B(\alpha, \alpha)}{\Gamma(\alpha)} t^\alpha \ell(t)  < c_2. 
    \]

    For the lower bound, we apply the same bounds and convolution asymptotics to find 
    \begin{align}
        &\ G_u(f)(t) \notag
        \\ &\geq \frac{t^{\alpha-1}}{\Gamma(\alpha)}\ell(t)u - c_2 u |\beta| \int_0^t \frac{(t-s)^{\alpha-1}}{\Gamma(\alpha)}\ell(t-s) s^{\alpha-1}\ell(s)\, \mathrm{d}s \notag
        \\ &\qquad \qquad \qquad \qquad - \frac{\sigma^2}{2}c_2^2 u^2 \int_0^t \frac{(t-s)^{\alpha-1}}{\Gamma(\alpha)}\ell(t-s) s^{2\alpha-2}\ell(s)^2\, \mathrm{d}s \notag
        \\ &\geq u t^{\alpha-1} \ell(t)\left[ \frac{1}{\Gamma(\alpha)} - (1+\varepsilon) c_2 |\beta| \frac{B(\alpha,\alpha)}{\Gamma(\alpha)}t^{\alpha}\ell(t) - \frac{\sigma^2}{2}c_2^2 (1+\varepsilon) u \frac{B(\alpha, 2\alpha-1)}{\Gamma(\alpha)} t^{2\alpha-1}\ell(t)^2 \right]. \label{eq:est Gu}
    \end{align}
    Let us bound the term $u t^{2\alpha-1}\ell(t)^2$. By Potter's bounds, for each $\delta > 0$ small, there exists $t_0 > 0$ such that for all $0 < t \leq t_u < t_0$, and hence by \eqref{eq: t0 tu relation} for some~$u_0$ large enough,
    \[
        \frac{\ell(t)}{\ell(t_u)} \leq (1+\varepsilon) \left( \frac{t}{t_u}\right)^{-\delta}.
    \]
    By definition of the generalized inverse, for $t \leq t_u$, we find $u t_u^{2\alpha-1}\ell(t_u)^2 \leq M$. Hence we obtain for $0 < t \leq t_u$ and $u \geq u_0$
    \begin{align*}
    u t^{2\alpha - 1}\ell(t)^2 &\leq (1+\varepsilon)^2 \left( u t_u^{2\alpha-1}\ell(t_u)^2\right) \left( \frac{t}{t_u}\right)^{2\alpha - 1 - 2\delta} 
    \\ &\leq (1+\varepsilon)^2 \left( u t_u^{2\alpha-1}\ell(t_u)^2\right) 
    \\ &\leq (1+\varepsilon)^2 M,
    \end{align*}
    provided we let $\delta$ be small enough such that $2\alpha - 1 - 2\delta > 0$. Since $1/\Gamma(\alpha) > c_1$, we may choose $\varepsilon$ small enough such that $\frac{1}{\Gamma(\alpha)} > \varepsilon + c_1$. For the second term in the bracket
    in~\eqref{eq:est Gu}, noting that $t^{\alpha}\ell(t) \to 0$ as $t \to 0$, we may further decrease $t_0$ and by \eqref{eq: t0 tu relation} increase $u_0$ to bound the term corresponding to the linear term of the VIE by~$\varepsilon$. Hence, we obtain
    \begin{align}\label{eq: 8}
        G_u(f)(t) &\geq u t^{\alpha-1}\ell(t)\left[ \frac{1}{\Gamma(\alpha)} - \varepsilon -  (1+\varepsilon)^3\frac{\sigma^2}{2} c_2^2 A^{-1}_{\alpha} M\right] > c_1 u t^{\alpha-1}\ell(t),
    \end{align}
    provided that $M$ satisfies
    \[
        0 < M \leq (1+\varepsilon)^{-3}\frac{2A_{\alpha}}{\sigma^2 c_2^2} \left( \frac{1}{\Gamma(\alpha)} - \varepsilon - c_1 \right).
    \] 
    This shows that $G_u(\mathcal{U}_u) \subseteq \mathcal{U}_u$.

    \textit{Step 2. Strict Contraction.}
    For $f, g \in \mathcal{U}_u$, the difference satisfies
    \[
        |f(t) - g(t)| \leq t^{\alpha-1}\ell(t)\| f - g\|_u, \qquad t \in I_u,
    \]
    while the difference of squares gives 
    \[
        |f(t)^2 - g(t)^2| \leq (|f(t)| + |g(t)|)|f(t) - g(t)| \leq 2 c_2 u t^{2\alpha-2}\ell(t)^2 \|f-g\|_u, \qquad t \in I_u.
    \]
    Hence, evaluating $G_u$ yields
    \begin{align*}
        |G_u(f)(t) - G_u(g)(t)| 
        &\leq |\beta| \int_0^t \frac{(t-s)^{\alpha-1}}{\Gamma(\alpha)}\ell(t-s) s^{\alpha-1}\ell(s)\, \mathrm{d}s \| f-g\|_u
        \\ &\qquad + \frac{\sigma^2}{2} 2 c_2 u \int_0^t \frac{(t-s)^{\alpha-1}}{\Gamma(\alpha)}\ell(t-s) s^{2\alpha-2}\ell(s)^2\, \mathrm{d}s \| f-g\|_u
        \\ &\leq (1+\varepsilon) t^{\alpha-1}\ell(t) \| f - g\|_u \left[ |\beta| \frac{B(\alpha,\alpha)}{\Gamma(\alpha)}t^{\alpha}\ell(t) + \sigma^2 c_2 u A^{-1}_{\alpha} t^{2\alpha-1}\ell(t)^2 \right].
    \end{align*}
    Since $u t^{2\alpha-1}\ell(t)^2 \leq (1+\varepsilon)^2M$ for $t \in I_u$, and $t^\alpha\ell(t)$ converges to zero as $t \searrow 0$, we find $t_0 > 0$ small enough, and hence by \eqref{eq: t0 tu relation} $u_0$ large enough such that for $u \geq u_0$ and $t \in I_u$
    \begin{align*}
     &(1+\varepsilon) \left[ |\beta| \frac{B(\alpha,\alpha)}{\Gamma(\alpha)}t^{\alpha}\ell(t) + \sigma^2 c_2 u A^{-1}_{\alpha} t^{2\alpha-1}\ell(t)^2 \right] 
     \\ &\qquad \qquad < (1+\varepsilon) \left[ \varepsilon + (1+\varepsilon)^2\sigma^2 c_2 A^{-1}_{\alpha} M \right] < 1
    \end{align*}
    provided that $M$ satisfies
    \begin{align}\label{eq: M step 2}
        0 < M < \frac{A_{\alpha}}{\sigma^2 c_2 } \frac{1 - \varepsilon(1+\varepsilon)}{(1+\varepsilon)^{3}}. 
    \end{align}
    This shows that $G_u$ is a contraction and completes the proof of step 2.

    \textit{Step 3. Concluding the lower bound.} By Banach's fixed point theorem, there exists a unique solution $f_u \in \mathcal{U}_u$ of the equation $f_u = G_u(f_u)$. Since $f_u$ satisfies \eqref{eq: Laplace Volterra Riccati equation} by construction, uniqueness yields $f_u = V_u$ on $I_u$ with $u \geq u_0$. Since $V_{\infty}(t) = \lim_{u \to \infty} V_u(t)$, we evaluate along the parameterized trajectory determined by $u(t) = M t^{1-2\alpha} \ell(t)^{-2}$. For sufficiently small $t$, the trajectory $u(t)$ is sufficiently large, and hence we find $t \leq t_{u(t)}$ which gives $t \in I_{u(t)}$, and hence 
    \[
        V_{\infty}(t) \geq V_{u(t)}(t) = f_{u(t)}(t) \geq c_1 u(t) t^{\alpha-1}\ell(t) = c_1 M \frac{t^{-\alpha}}{\ell(t)}.
    \]
    To optimize the choices of the constants $c_1, c_2$, and $M$, we maximize the product $c_1 M$ subject to the invariance constraint from step~1, which by~\eqref{eq: 8}
    requires $c_1 \leq \frac{1}{\Gamma(\alpha)} - \varepsilon - \frac{\sigma^2}{2} c_2^2 A^{-1}_\alpha (1+\varepsilon)^3 M$. This yields the upper bound
    \[
        c_1 M \leq M \left( \frac{1}{\Gamma(\alpha)} - \varepsilon - (1+\varepsilon)^3\frac{\sigma^2}{2} c_2^2 A^{-1}_{\alpha} M \right).
    \]
    The right-hand side is a concave quadratic function of $M$, whose maximum is achieved at
    \[
        M^* = \frac{A_{\alpha}(1+\varepsilon)^{-3}}{\sigma^2 c_2^2} \left( \frac{1}{\Gamma(\alpha)} - \varepsilon \right).
    \]
    Remark that step 2 remains valid for this choice of $M^*$, since \eqref{eq: M step 2} reduces to $\frac{1}{\Gamma(\alpha)} - \varepsilon < c_2(1 - \varepsilon(1+\varepsilon))$ which is satisfied for sufficiently small $\varepsilon$. Evaluating the quadratic at $M^*$ yields 
    \[
        c_1 M^* = \frac{A_{\alpha}(1+\varepsilon)^{-3}}{2 \sigma^2 c_2^2} \left( \frac{1}{\Gamma(\alpha)} - \varepsilon \right)^2.
    \]
    By choosing $\varepsilon > 0$ arbitrarily small and letting $c_2 \searrow 1/\Gamma(\alpha)$, the product satisfies
    \[
        \lim_{\substack{\varepsilon \searrow 0 \\ c_2 \searrow 1/\Gamma(\alpha)}} c_1 M^* = \frac{A_{\alpha}}{2\sigma^2} = \frac{\Gamma(3\alpha-1)}{2\sigma^2 \Gamma(2\alpha-1)}.
    \]
    Therefore, for any given threshold $\eta \in (0,1)$, we can choose $c_2 > 1/\Gamma(\alpha)$ sufficiently close to its lower bound, $\varepsilon, t_0 > 0$ sufficiently small, and hence $u_0$ sufficiently large such that $c_1 M \geq (1-\eta)\frac{2A_\alpha}{\sigma^2}$. This completes the proof.
\end{proof}

\subsection{The regular case}

Below we study bounds for the unique solution $V_u$ of \eqref{eq: Laplace Volterra Riccati equation} in the case of regular Volterra kernels. The latter are crucial for the sufficiency on the boundary non-attainment and existence of negative moments.

\begin{Theorem}\label{Theorem: lower bound}
    Suppose that $K \in C^1(\R_+)$. Then the following assertions hold:
    \begin{enumerate}
        \item[(a)] The function $V_u$ satisfies the two-sided bounds
        \begin{equation}\label{eq:Vu geq}
            W_u(t) \leq V_u(t) \leq W_u(t) + K(0)L_0(0) \int_0^t W_u(s) \mathrm{e}^{\beta^+ K(0)(t-s)}\, \mathrm{d}s.
        \end{equation}
        where $\beta^+ = \max\{\beta, 0\}$. The function $W_u$ is for $\beta \neq L_0(0)$ given by
        \[
            W_u(t) = \frac{(\beta - L_0(0))K(0)u \mathrm{e}^{K(0)(\beta - L_0(0))t}}{\beta - L_0(0) + \frac{\sigma^2}{2}K(0)u \left( \mathrm{e}^{K(0)(\beta - L_0(0))t} - 1\right)} , \qquad t > 0,
        \]
        while for $\beta = L_0(0)$ it takes the form
        \[
            W_u(t) = \frac{K(0)u}{1 + \frac{\sigma^2 K(0)^2}{2}ut}, \qquad t > 0.
        \]
        \item[(b)] The function $R_u(t) := (L \ast V_u)(t)$ satisfies the upper bound 
        \begin{equation}\label{eq:Ru leq}
            R_u(t) \leq \frac{W_u(t)}{K(0)} + L_0(0)\int_0^t W_u(s) \mathrm{e}^{\beta^+ K(0)(t-s)}\, \mathrm{d}s.
        \end{equation}
        \item[(c)] For any $t>0$ such that $\beta < L_0(t)$, we have
        \[
            \int_0^t V_u(s)\, \mathrm{d}s \leq \frac{L_0(0) - \beta}{L_0(t) - \beta}\int_0^t W_u(s)\, \mathrm{d}s.
        \]
    \end{enumerate}
\end{Theorem}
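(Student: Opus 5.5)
The plan is to exploit the structure of the resolvent of the first kind for a regular kernel: by \eqref{eq: resolvent first kind} we have $L = K(0)^{-1}\delta_0 + L_0(t)\,\mathrm{d}t$ with $L_0$ completely monotone, hence nonnegative and nonincreasing, and $L_0(0) = |K'(0)|/K(0)^2$ by Lemma \ref{lemma: first kind asymptotics}. Convolving \eqref{eq: Laplace Volterra Riccati equation} with $L$ as in \eqref{eq: L convolution V} gives the identity
\begin{align*}
    R_u(t) = \frac{V_u(t)}{K(0)} + (L_0 \ast V_u)(t) = u + \int_0^t \Bigl(\beta V_u(s) - \frac{\sigma^2}{2}V_u(s)^2\Bigr)\mathrm{d}s .
\end{align*}
All three assertions will be read off from this identity together with the explicit integrated form of the ODE solved by $W_u$.

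For the ODE, one checks directly that $W_u$ is the solution of the logistic/Riccati equation $W' = K(0)\bigl[(\beta - L_0(0))W - \frac{\sigma^2}{2}W^2\bigr]$ with $W(0) = K(0)u$, in both cases $\beta \ne L_0(0)$ and $\beta = L_0(0)$. Equivalently,
\[
    \frac{W_u(t)}{K(0)} = u + \int_0^t \Bigl((\beta - L_0(0))W_u - \frac{\sigma^2}{2}W_u^2\Bigr)\mathrm{d}s .
\]

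\textbf{Lower bound in (a).} Since $K \in C^1(\R_+)$, differentiating the Volterra equation shows that $V_u$ is $C^1$ on $[0,\infty)$ with $V_u(0) = K(0)u$. Differentiating the identity above, and using $L_0' \le 0$ and $V_u \ge 0$, yields
\[
    \frac{V_u'}{K(0)} = (\beta - L_0(0))V_u - \frac{\sigma^2}{2}V_u^2 - (L_0' \ast V_u) \ge (\beta - L_0(0))V_u - \frac{\sigma^2}{2}V_u^2 .
\]
So $V_u$ is a supersolution of the ODE with the same initial value, and the classical scalar ODE comparison gives $W_u \le V_u$. The main technical point here is justifying the differentiability of $L_0 \ast V_u$ up to $t=0$. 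If that becomes delicate, I will instead run the comparison in integrated form via the difference $D$ below.

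\textbf{Upper bounds in (a) and (b).} Set $D = V_u - W_u \ge 0$ and subtract the two integrated identities. This gives
\[
    \frac{D(t)}{K(0)} = \beta\int_0^t D + L_0(0)\int_0^t W_u - \frac{\sigma^2}{2}\int_0^t (V_u + W_u)D - (L_0 \ast V_u)(t) .
\]
The last two terms are nonpositive, so $D(t) \le K(0)\beta^+\int_0^t D + K(0)L_0(0)\int_0^t W_u$. Gronwall's inequality with the nondecreasing forcing $g(t) = K(0)L_0(0)\int_0^t W_u$, followed by an integration by parts, gives $D(t) \le K(0)L_0(0)\int_0^t W_u(s)\mathrm{e}^{\beta^+K(0)(t-s)}\,\mathrm{d}s$, which is \eqref{eq:Vu geq}. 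For (b), the same subtraction written for $R_u - W_u/K(0)$ (now dropping only the quadratic term) gives
\[
    R_u(t) - \frac{W_u(t)}{K(0)} \le \beta^+\int_0^t D + L_0(0)\int_0^t W_u .
\]
Inserting the bound on $D$ and evaluating the iterated exponential integral yields exactly $L_0(0)\int_0^t W_u(s)\mathrm{e}^{\beta^+K(0)(t-s)}\,\mathrm{d}s$.

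\textbf{Part (c).} I use monotonicity of $L_0$ in the form $(L_0 \ast V_u)(t) \ge L_0(t)\int_0^t V_u$. Combined with the identity for $R_u$, this gives
\[
    (L_0(t) - \beta)\int_0^t V_u \le u - \frac{\sigma^2}{2}\int_0^t V_u^2 - \frac{V_u(t)}{K(0)} .
\]
By the lower bound $W_u \le V_u$ from (a), the right-hand side is at most $u - \frac{\sigma^2}{2}\int_0^t W_u^2 - \frac{W_u(t)}{K(0)}$. By the integrated ODE this equals $(L_0(0) - \beta)\int_0^t W_u$. Dividing by $L_0(t) - \beta > 0$ gives the claim.

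I expect the only real obstacle to be the rigorous comparison for the lower bound, i.e.\ the regularity at $t=0$; everything else reduces to Gronwall and sign considerations.
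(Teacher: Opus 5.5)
Your proof is correct. The lower bound in (a) and part (c) are exactly the paper's arguments: differentiate the $L$-convolved identity, drop $-K(0)(L_0'\ast V_u)\ge 0$, and use ODE comparison; then use $(L_0\ast V_u)(t)\ge L_0(t)\int_0^t V_u$ together with the integrated ODE for $W_u$. The difference lies in the upper bounds.

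\textbf{How the paper proves the upper bounds.} It works with $R_u$ directly. It gets $R_u'\le \beta W_u-\frac{\sigma^2}{2}W_u^2$ when $\beta\le 0$, using $W_u\le V_u$. It gets $R_u'\le \beta K(0)R_u-\frac{\sigma^2}{2}W_u^2$ when $\beta>0$, using in addition $V_u\le K(0)R_u$, which amounts to discarding $L_0\ast V_u\ge 0$. It integrates, with an integrating factor in the second case, and substitutes the ODE for $W_u$ to obtain (b). The upper bound in (a) then follows in one line from $V_u\le K(0)R_u$.

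\textbf{How your route differs.} You run an integral Gronwall argument on $D=V_u-W_u$. Using $\beta\int_0^t D\le \beta^+\int_0^t D$ lets you treat both signs of $\beta$ at once. You then recover (b) by inserting the bound on $D$ into the identity for $R_u-W_u/K(0)$ and evaluating the iterated exponential integral with Fubini.

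\textbf{Comparison.} Both routes give exactly the same bounds. The paper's order is slightly more economical, because $R_u$ is the quantity actually needed later for the Laplace exponent $\Phi_u$, and the $V_u$-bound comes for free. Yours avoids the case distinction in $\beta$.

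\textbf{On the obstacle you flag.} It is harmless. $L_0$ is nonincreasing with $L_0(0)=|K'(0)|/K(0)^2<\infty$, so $L_0'\in L^1_{\mathrm{loc}}$, and for continuous $V_u$ one has $(L_0\ast V_u)'=L_0(0)V_u+L_0'\ast V_u$. This is exactly how the paper differentiates. Your proposed fallback, however, would not work as stated. The $D$-argument already uses $D\ge 0$, both to discard $-\frac{\sigma^2}{2}\int_0^t (V_u+W_u)D$ and to pass from $\beta$ to $\beta^+$. So it cannot replace the proof of the lower bound without a separate positivity argument for the linear Volterra equation satisfied by $D$.
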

\begin{proof}
    Let us first prove the lower bound. Since $K(0) < \infty$, using~\eqref{eq: L convolution V}
    combined with~\eqref{eq: resolvent first kind}, we find
    \[
        K(0)^{-1}V_u(t) + \int_0^t V_u(s)L_0(t-s)\, \mathrm{d}s = (L \ast V_u)(t) = u + \int_0^t \left( \beta V_u(s) - \frac{\sigma^2}{2} V_u(s)^2\right)\, \mathrm{d}s. 
    \]
    Since $|K'(0)| < \infty$, by Lemma \ref{lemma: first kind asymptotics} we find $L_0(0) = |K'(0)|/K(0)^2 \in \R_+$, and hence differentiating in $t$ and multiplying by $K(0)$ gives
    \begin{align*}
        V_u'(t) &= - K(0)L_0(0) V_u(t) - K(0) \int_0^t V_u(s)L_0'(t-s)\, \mathrm{d}s + K(0)\beta V_u(t) - \frac{\sigma^2 K(0)}{2}V_u(t)^2
        \\ &\geq K(0)\left( \beta - L_0(0)\right) V_u(t) - \frac{\sigma^2 K(0)}{2}V_u(t)^2,
    \end{align*}
    where we have used $L_0' \leq 0$ by complete monotonicity. Let $W_u$ be the unique solution of
    \begin{align}\label{eq: Wu equation}
        W_u'(t) = K(0)\left( \beta - L_0(0)\right) W_u(t) - \frac{\sigma^2K(0)}{2}W_u(t)^2, \qquad W_u(0) = K(0)u.
    \end{align}
    Since $W_u(0) = K(0)u = V_u(0)$, we find $W_u(t) \leq V_u(t)$. Solving \eqref{eq: Wu equation} proves the desired lower bound. 

    We proceed to prove the upper bound.  Using $L_0 \geq 0$, we find
    \[
        K(0)^{-1}V_u(t) \leq K(0)^{-1}V_u(t) + \int_0^t V_u(s)L_0(t-s)\, \mathrm{d}s = R_u(t).
    \]
    Firstly, assume that $\beta \leq 0$.
    From $0 \leq W_u(t) \leq V_u(t)$ we get $- V_u(t)^2 \leq - W_u(t)^2$, and hence the bound
    \[
        R_u'(t) = \beta V_u(t) - \frac{\sigma^2}{2}V_u(t)^2 \leq \beta W_u(t) - \frac{\sigma^2}{2}W_u(t)^2.
    \]
    Noting that $R_u(0) = u$ and using \eqref{eq: Wu equation}, we arrive at
    \begin{align*}
    R_u(t) &\leq u + \int_0^t \left( \beta W_u(s) - \frac{\sigma^2}{2} W_u(s)^2 \right)\, \mathrm{d}s
    \\ &= u + \int_0^t \left( \beta W_u(s) + \frac{W_u'(s)}{K(0)} - (\beta - L_0(0))W_u(s)\right)\, \mathrm{d}s
    \\ &= \frac{W_u(t)}{K(0)} + L_0(0)\int_0^t W_u(s)\, \mathrm{d}s.
    \end{align*}
    The assertion now follows from $V_u(t) \leq K(0)R_u(t)$. Let us now assume that $\beta > 0$. Using again $-V_u(t)^2 \leq - W_u(t)^2$ and $V_u(t) \leq K(0)R_u(t)$, we find
    \[
    R_u'(t) = \beta V_u(t) - \frac{\sigma^2}{2}V_u(t)^2 \leq \beta K(0)R_u(t) - \frac{\sigma^2}{2}W_u(t)^2.
    \]
    From this we obtain
    \[
    \frac{\mathrm{d}}{\mathrm{d}t}\left( R_u(t) \mathrm{e}^{-\beta K(0)t} \right) \leq - \frac{\sigma^2}{2} W_u(t)^2 \mathrm{e}^{-\beta K(0)t},
    \]
    and hence after integration, using $R_u(0) = u$ and~\eqref{eq: Wu equation},
    \begin{align*}
        R_u(t) &\leq u \mathrm{e}^{\beta K(0)t} - \frac{\sigma^2}{2} \int_0^t W_u(s)^2 \mathrm{e}^{\beta K(0)(t-s)}\, \mathrm{d}s
        \\ &= u \mathrm{e}^{\beta K(0)t} + \int_0^t \left( \frac{W_u'(s)}{K(0)} - (\beta - L_0(0))W_u(s)\right) \mathrm{e}^{\beta K(0)(t-s)}\, \mathrm{d}s 
        \\ &= u \mathrm{e}^{\beta K(0)t} + \int_0^t \frac{\mathrm{d}}{\mathrm{d}s}\left( \frac{W_u(s)}{K(0)}\mathrm{e}^{\beta K(0)(t-s)}\right) \, \mathrm{d}s + L_0(0)\int_0^t W_u(s)\mathrm{e}^{\beta K(0)(t-s)}\, \mathrm{d}s
        \\ &= \frac{W_u(t)}{K(0)} + L_0(0)\int_0^t W_u(s)\mathrm{e}^{\beta K(0)(t-s)}\, \mathrm{d}s,
    \end{align*}
    where we have used 
    \begin{align*}
        \frac{\mathrm{d}}{\mathrm{d}s}\left( \frac{W_u(s)}{K(0)}\mathrm{e}^{\beta K(0)(t-s)}\right) = \frac{W_u'(s)}{K(0)}\mathrm{e}^{\beta K(0)(t-s)} - \beta W_u(s)\mathrm{e}^{\beta K(0)(t-s)}.
    \end{align*}
    This proves assertion (b), and using $V_u(t) \leq K(0)R_u(t)$ also yields the desired upper bound in assertion (a).

    To prove assertion (c), let us first note that, by complete monotonicity of~$L_0$,
    \begin{align*}
        K(0)^{-1}V_u(t) + L_0(t)\int_0^t V_u(s)\, \mathrm{d}s \leq R_u(t) 
        = u + \int_0^t \left( \beta V_u(s) - \frac{\sigma^2}{2}V_u(s)^2\right)\, \mathrm{d}s.
    \end{align*}
    Rearranging terms and using $0 \leq W_u(t) \leq V_u(t)$, we find
    \begin{align*}
        (L_0(t) - \beta)\int_0^t V_u(s)\, \mathrm{d}s 
        &\leq u - \frac{W_u(t)}{K(0)} - \frac{\sigma^2}{2}\int_0^t W_u(s)^2\, \mathrm{d}s
        \\ &= u - \frac{W_u(t)}{K(0)} + \int_0^t \left( \frac{W_u'(s)}{K(0)} - (\beta - L_0(0))W_u(s)\right)\, \mathrm{d}s
        \\ &= (L_0(0) - \beta) \int_0^t W_u(s)\, \mathrm{d}s.
    \end{align*}
    Since $\beta < L_0(t) < L_0(0)$, dividing by $L_0(t) - \beta$ proves the assertion.
\end{proof}

\section{Boundary behaviour}

\subsection{Case of finite time}

Below we start with the case of regular Volterra kernels.

\begin{proof}[Proof of Theorem \ref{thm: boundary regular case}]
Recall that by Theorem \ref{Theorem: lower bound}, we have the bounds $W_u(t) \leq V_u(t) \leq \widetilde{W}_u(t)$, where $\widetilde{W}_u(t) = W_u(t) + K(0)L_0(0) \int_0^t W_u(s) \mathrm{e}^{\beta^+ K(0)(t-s)}\, \mathrm{d}s$, and $W_u(t)$ can be expressed as
\begin{align}\label{eq: Wu representation}
    W_u(t) = \frac{2}{\sigma^2 K(0)} \frac{\mathrm{d}}{\mathrm{d}t}\log\left( 1 + u \theta(t) \right)
\end{align}
with $\theta(t) = \frac{\sigma^2 K(0)^2}{2} \frac{\mathrm{e}^{\lambda t}-1}{\lambda}$ when $\lambda = K(0)(\beta - L_0(0)) \neq 0$, and $\theta(t) = \frac{\sigma^2 K(0)^2}{2} t$ when $\lambda = 0$. Let us denote by
\begin{align*}
    \Phi_u(t) &= x_0 ( V_u \ast L)(t) + b \int_0^t V_u(s)\, \mathrm{d}s
    \\ &= \frac{x_0}{K(0)}V_u(t) + x_0 \int_0^t V_u(s)L_0(t-s)\, \mathrm{d}s + b \int_0^t V_u(s)\, \mathrm{d}s
\end{align*}
the exponent of the Laplace transform determined by the relation $\E[\mathrm{e}^{-uX_t}] = \exp(-\Phi_u(t))$; see~\eqref{eq: affine trafo}, \eqref{eq:def V_u}
and~\eqref{eq: L convolution V}.  Using \eqref{eq: resolvent first kind} and the complete monotonicity of $L_0$ which gives $L_0(t-s) \geq L_0(t)$, we obtain the lower bound 
\begin{align*}
    \Phi_u(t) &\geq \frac{x_0}{K(0)}W_u(t) + (x_0 L_0(t) + b)\int_0^t W_u(s)\, \mathrm{d}s 
    \\ &= \frac{x_0}{K(0)}W_u(t) + \gamma_-(t) \log\left(  1 + u \theta(t) \right),
\end{align*}
where $\gamma_-(t) = \frac{2(x_0L_0(t) + b)}{\sigma^2 K(0)}$ and we have used \eqref{eq: Wu representation}. To obtain an upper bound for $\Phi_u(t)$, first suppose that $\beta \geq L_0(t)$. Substituting the upper bound $V_u(t) \leq \widetilde{W}_u(t)$ and the upper bound~\eqref{eq:Ru leq} for $R_u$ yields 
\begin{align*}
    \Phi_u(t) &= x_0 R_u(t) + b \int_0^t V_u(s)\, \mathrm{d}s
    \\ &\leq x_0 \frac{W_u(t)}{K(0)} + x_0 L_0(0)\int_0^t W_u(s)\mathrm{e}^{\beta^+ K(0)(t-s)}\, \mathrm{d}s + b \int_0^t \widetilde{W}_u(s)\, \mathrm{d}s.
\end{align*}
For the last integral, we use the particular form of $\widetilde{W}_u$ combined with Fubini's Theorem to find
\begin{align*}
    \int_0^t \widetilde{W}_u(s)\, \mathrm{d}s &= \int_0^t W_u(s)\, \mathrm{d}s + K(0)L_0(0) \int_0^t \int_0^s W_u(r)\mathrm{e}^{\beta^+ K(0)(s-r)}\, \mathrm{d}r \mathrm{d}s
    \\ &= \int_0^t W_u(s)\, \mathrm{d}s + K(0)L_0(0)\int_0^t W_u(s) \frac{\mathrm{e}^{\beta^+ K(0)(t-s)} - 1}{\beta^+ K(0)} \, \mathrm{d}s.
\end{align*}
Hence we obtain
\begin{align*}
    \Phi_u(t) &\leq \frac{x_0}{K(0)}W_u(t) + \int_0^t W_u(s) \Lambda(t-s)\, \mathrm{d}s,
\end{align*}
where $\Lambda$ is defined as in Theorem \ref{thm: boundary regular case}. Noting that $\Lambda$ is increasing, we may bound $\Lambda(t-s) \leq \Lambda(t)$, and hence obtain
\[
    \Phi_u(t) \leq \frac{x_0}{K(0)}W_u(t) + \Lambda(t)\int_0^t W_u(s)\, \mathrm{d}s
    = \frac{x_0}{K(0)}W_u(t) + \gamma_+(t) \log(1 + u \theta(t))
\]
with $\gamma_+(t) = \frac{2\Lambda(t)}{\sigma^2 K(0)}$, where we have used \eqref{eq: Wu representation}. Consequently, the Laplace transform is bounded on both sides by
\[
    \frac{\exp\left(- \frac{x_0}{K(0)}W_u(t)\right)}{ \left( 1 + u \theta(t) \right)^{\gamma_+(t)}} \leq \E\big[ \mathrm{e}^{-uX_t}\big] \leq \frac{\exp\left(- \frac{x_0}{K(0)}W_u(t)\right)}{\left( 1 + u \theta(t) \right)^{\gamma_-(t)}}.
\]
Finally, if $\beta < L_0(t)$, then we may use Theorem \ref{Theorem: lower bound}~(b)
and~(c) to find
\begin{align*}
    \Phi_u(t) &\leq \frac{x_0}{K(0)}W_u(t) + x_0 L_0(0)\int_0^t W_u(s)\mathrm{e}^{\beta^+ K(0)(t-s)}\, \mathrm{d}s + b \frac{L_0(0) - \beta}{L_0(t) - \beta} \int_0^t W_u(s)\, \mathrm{d}s
    \\ &\leq \frac{x_0}{K(0)}W_u(t) + \left(x_0 L_0(0)\mathrm{e}^{\beta^+ K(0)t} + b \frac{L_0(0) - \beta}{L_0(t) - \beta}\right)\int_0^t W_u(s)\, \mathrm{d}s
    \\ &= \frac{x_0}{K(0)}W_u(t) + \frac{2\left(x_0 L_0(0)\mathrm{e}^{\beta^+ K(0)t} + b \frac{L_0(0) - \beta}{L_0(t) - \beta}\right)}{\sigma^2 K(0)}\log(1 + u \theta(t)).
\end{align*}
In this case, we obtain the desired lower bound
\[
\E\left[ \mathrm{e}^{-u X_t}\right] \geq \frac{\exp\left( - \frac{x_0}{K(0)}W_u(t)\right)}{(1 + u \theta(t))^{\gamma_+(t)}}.
\]
Using the representation $x^{-p} = \frac{1}{\Gamma(p)}\int_0^{\infty} \mathrm{e}^{-xu}u^{p-1}\, \mathrm{d}u$, we obtain from the substitution $w = u \theta(t)$ the upper bound 
\begin{align*}
    \E[X_t^{-p}] &= \int_0^{\infty} \E\left[ \mathrm{e}^{-u X_t} \right] \frac{u^{p-1}}{\Gamma(p)}\, \mathrm{d}u
    \\ &\leq \int_0^{\infty} \exp\left(- \frac{x_0}{K(0)}W_u(t)\right) \left( 1 + u \theta(t) \right)^{-\gamma_-(t)} \frac{u^{p-1}}{\Gamma(p)}\, \mathrm{d}u
    \\ &= \theta(t)^{-p} \int_0^{\infty} \exp\left( - C(t) \frac{w}{1 + w}\right) \left( 1 + w\right)^{- \gamma_-(t)} \frac{w^{p-1}}{\Gamma(p)}\, \mathrm{d}w.
\end{align*}
Analogously, we obtain the lower bound
\begin{align}
    \E[X_t^{-p}] &\geq \int_0^{\infty} \exp\left(- \frac{x_0}{K(0)}W_u(t)\right) \left( 1 + u \theta(t) \right)^{-\gamma_+(t)} \frac{u^{p-1}}{\Gamma(p)}\, \mathrm{d}u \notag
    \\ &=  \theta(t)^{-p} \int_0^{\infty} \exp\left( - C(t) \frac{w}{1 + w}\right) \left( 1 + w\right)^{- \gamma_+(t)} \frac{w^{p-1}}{\Gamma(p)}\, \mathrm{d}w. \label{eq:lb X^-p}
\end{align}
If $x_0 = 0$, then after substitution, both sides are given by the Beta function $B(p, \gamma_{\pm}(t)-p)/\Gamma(p) = \frac{\Gamma(\gamma_{\pm}(t) - p)}{\Gamma(\gamma_{\pm}(t))}$, which gives the asserted bounds.

Suppose that $x_0 > 0$. For the upper bound (using $\gamma_-(t)$), we split the integral at $w = 1$. For $w \in [0,1]$, we use $\frac{w}{1+w} \geq \frac{w}{2}$ and $(1+w)^{-\gamma_-(t)} \leq 1$. Extending the domain to infinity yields
\begin{align*}
    \int_0^1 \exp\left( - C(t) \frac{w}{1 + w}\right) \left( 1 + w\right)^{- \gamma_-(t)} \frac{w^{p-1}}{\Gamma(p)}\, \mathrm{d}w 
    &\leq \int_0^\infty \exp\left( - \frac{C(t)}{2} w\right) \frac{w^{p-1}}{\Gamma(p)}\, \mathrm{d}w 
    = \left( \frac{C(t)}{2} \right)^{-p}.
\end{align*}
For $w \in [1,\infty)$, we use $\frac{w}{1+w} \geq \frac{1}{2}$ to obtain:
\begin{align*}
    \int_1^\infty \exp\left( - C(t) \frac{w}{1 + w}\right) \left( 1 + w\right)^{- \gamma_-(t)} \frac{w^{p-1}}{\Gamma(p)}\, \mathrm{d}w 
    &\leq \exp\left(- \frac{C(t)}{2}\right) \int_0^\infty (1+w)^{-\gamma_-(t)} \frac{w^{p-1}}{\Gamma(p)}\, \mathrm{d}w 
    \\ &= \exp\left(- \frac{C(t)}{2}\right) \frac{\Gamma(\gamma_-(t) - p)}{\Gamma(\gamma_-(t))}.
\end{align*}
Summing these components provides the upper bound. 

For the lower bound (using $\gamma_+(t)$), we use the global inequality $\frac{w}{1+w} \leq 1$ over $[0, \infty)$, implying $\exp\left(-C(t)\frac{w}{1+w}\right) \geq \mathrm{e}^{-C(t)}$. Factoring this constant out leaves the standard Beta integral
\begin{align*}
    \int_0^\infty \exp\left( - C(t) \frac{w}{1 + w}\right) \left( 1 + w\right)^{- \gamma_+(t)} \frac{w^{p-1}}{\Gamma(p)}\, \mathrm{d}w 
    &\geq \mathrm{e}^{-C(t)} \int_0^\infty (1+w)^{-\gamma_+(t)} \frac{w^{p-1}}{\Gamma(p)}\, \mathrm{d}w 
    \\ &= \mathrm{e}^{-C(t)} \frac{\Gamma(\gamma_+(t) - p)}{\Gamma(\gamma_+(t))}.
\end{align*}
Combining this with~\eqref{eq:lb X^-p} proves the assertion. 
\end{proof}

Next we continue with the proof for the rough case.
 \begin{proof}[Proof of Theorem \ref{Theorem: atom rough}]
    The probability of an atom at zero is given as the limit $u \to \infty$ in the Laplace transform. Hence, using again \eqref{eq: affine trafo} combined with \eqref{eq: L convolution V}, we find that
    \begin{align} 
        \mathbb{P}[X_t = 0] = \lim_{u \to \infty} \mathbb{E}\left[ e^{-u X_t} \right] 
        = \exp\left( - x_0 \int_0^t L_0(t-s) V_\infty(s)\, \mathrm{d}s - b \int_0^t V_\infty(s)\, \mathrm{d}s \right). \label{eq: 1}
    \end{align}
    
    \textit{Step 1: Local and global lower bounds.} By Theorem \ref{Theorem: upper bound rough}(a), for the general regularly varying case, we have the local upper bound $V_{\infty}(s) \leq \frac{2C_{\alpha}}{\sigma^2} \frac{s^{-\alpha}}{\ell(s)}$ for $s \in (0,t_0)$. Let $\tilde{\varepsilon} \in (0,1)$ be arbitrary. Then we find $t_1 \in (0,t_0)$ such that 
    \[
        \int_0^t V_{\infty}(s)\, \mathrm{d}s \leq \frac{2C_{\alpha}}{\sigma^2}\int_0^t \frac{s^{-\alpha}}{\ell(s)}\, \mathrm{d}s \leq (1+\tilde{\varepsilon}) \frac{C_*(\alpha)}{\sigma^2} \frac{t^{1-\alpha}}{1-\alpha}\ell(t)^{-1}, \qquad t \in (0,t_1),
    \]
    where we have used $2C_{\alpha} = - 2\Gamma(1-\alpha)/\Gamma(1-2\alpha) = C_*(\alpha)$. For the other term, Lemma \ref{lemma: first kind asymptotics}(a) yields $t_2 \in (0,t_1)$ such that $L_0(t) \leq \frac{1+\tilde{\varepsilon}}{\Gamma(1-\alpha)} \frac{t^{-\alpha}}{\ell(t)}$ holds for all $t \in (0,t_2)$. Hence, by Lemma \ref{lemma: convolution regularly varying} (applicable since $\ell^{-1}$ is slowly varying) we find $t_3 \in (0,t_2)$ such that 
    \begin{align*}
        \int_0^t L_0(t-s)V_{\infty}(s)\, \mathrm{d}s &\leq \frac{1+\tilde{\varepsilon}}{\Gamma(1-\alpha)}\frac{2C_{\alpha}}{\sigma^2}\int_0^t \frac{(t-s)^{-\alpha}}{\ell(t-s)}\frac{s^{-\alpha}}{\ell(s)}\, \mathrm{d}s
        \\ &\leq \frac{(1+\tilde{\varepsilon})^2}{\Gamma(1-\alpha)}\frac{2C_{\alpha}}{\sigma^2}B(1-\alpha, 1-\alpha) \frac{t^{1 - 2\alpha}}{\ell(t)^2}
    \end{align*}
    holds for $t \in (0,t_3)$. Using $B(1-\alpha, 1-\alpha) = \Gamma(1-\alpha)^2/\Gamma(2-2\alpha)$, the constant simplifies to $\Gamma(1-\alpha)/\Gamma(2-2\alpha)$. Choosing $\tilde{\varepsilon}$ small enough such that $(1+\tilde{\varepsilon})^2 \leq 1+\varepsilon$, and substituting these bounds directly into \eqref{eq: 1}, we can factor out $C_*(\alpha)/\sigma^2$ which yields the local lower bound. 
    
    For the pure fractional kernel, Theorem \ref{Theorem: upper bound rough}(b) provides the explicit global upper bound 
    \[
        V_{\infty}(s) \leq \frac{2C_{\alpha}}{\sigma^2} s^{-\alpha} + \mathbbm{1}_{\beta > 0} \frac{2\beta}{\sigma^2}, \qquad \forall s > 0.
    \]
    Integrating this bound over $[0, t]$ yields
    \begin{align*}
        \int_0^t V_{\infty}(s) \, \mathrm{d}s &\leq \int_0^t \left( \frac{2C_{\alpha}}{\sigma^2} s^{-\alpha} + \mathbbm{1}_{\beta > 0} \frac{2\beta}{\sigma^2} \right) \mathrm{d}s \\
        &= \frac{C_*(\alpha)}{\sigma^2(1-\alpha)} t^{1-\alpha} + \mathbbm{1}_{\beta > 0} \frac{2\beta}{\sigma^2} t.
    \end{align*}
    For the convolution integral, using $L_0(s) = \frac{s^{-\alpha}}{\Gamma(1-\alpha)}$, we have
    \begin{align*}
        \int_0^t L_0(t-s)V_{\infty}(s)\, \mathrm{d}s 
        &\leq \int_0^t \left(\frac{(t-s)^{-\alpha}}{\Gamma(1-\alpha)}\frac{2C_{\alpha}}{\sigma^2} s^{-\alpha} + \frac{s^{-\alpha}}{\Gamma(1-\alpha)}\mathbbm{1}_{\beta > 0} \frac{2\beta}{\sigma^2}\right)\, \mathrm{d}s
        \\ &= \frac{C_*(\alpha)}{\sigma^2} \frac{B(1-\alpha,1-\alpha)}{\Gamma(1-\alpha)} t^{1 - 2\alpha} + \mathbbm{1}_{\beta > 0}\frac{2\beta}{\sigma^2}\frac{t^{1 -\alpha}}{\Gamma(2-\alpha)}.
    \end{align*}
    Substituting these bounds into \eqref{eq: 1} proves the global lower bound. 

    \textit{Step 2: Local upper bounds.} Next, we establish the local upper bound. By Theorem \ref{Theorem: lower bound rough} we find for $\eta \in (0,1)$ some $t_0 > 0$ such that
    \[
        V_{\infty}(t) \geq (1-\eta) \frac{A_{\alpha}}{2\sigma^2}\frac{t^{-\alpha}}{\ell(t)} , \qquad t \in (0,t_0).
    \]
    Using this lower bound in \eqref{eq: 1} gives an upper bound on the probability:
    \begin{align*}
        \P[X_t = 0] &\leq \exp\left( - x_0(1-\eta) \frac{A_{\alpha}}{2\sigma^2}\int_0^t \frac{s^{-\alpha}}{\ell(s)}L_0(t-s)\, \mathrm{d}s - (1-\eta)b\frac{A_{\alpha}}{2\sigma^2} \int_0^t \frac{s^{-\alpha}}{\ell(s)}\, \mathrm{d}s \right).
    \end{align*}
    
    By adjusting $t_0$ smaller, say $0 < t_1 < t_0$, Lemma \ref{lemma: first kind asymptotics} provides $L_0(t) \geq (1-\eta) \frac{t^{-\alpha}}{\Gamma(1-\alpha)}\ell(t)^{-1}$, and we may use Lemma \ref{lemma: convolution regularly varying} and Karamatas Theorem for the second integral to bound 
    \begin{align*}
        &\   x_0(1-\eta) \frac{A_{\alpha}}{2\sigma^2}\int_0^t \frac{s^{-\alpha}}{\ell(s)}L_0(t-s)\, \mathrm{d}s + (1-\eta)b\frac{A_{\alpha}}{2\sigma^2} \int_0^t \frac{s^{-\alpha}}{\ell(s)}\, \mathrm{d}s
        \\ &\geq \frac{A_{\alpha}}{2\sigma^2} \left[ \frac{x_0 (1-\eta)^2 }{\Gamma(1-\alpha)} \int_0^t \frac{(t-s)^{-\alpha}s^{-\alpha}}{\ell(t-s)\ell(s)} \, \mathrm{d}s + b(1-\eta) \int_0^t \frac{s^{-\alpha}}{\ell(s)}\, \mathrm{d}s \right]
        \\ &\geq \frac{A_{\alpha}}{2\sigma^2}\left[ \frac{x_0(1-\eta)^3}{\Gamma(1-\alpha)}B(1-\alpha,1-\alpha)\frac{t^{1-2\alpha}}{\ell(t)^2} + \frac{b (1-\eta)^2}{1-\alpha} \frac{t^{1-\alpha}}{\ell(t)} \right].
    \end{align*}
    Using $\frac{B(1-\alpha,1-\alpha)}{\Gamma(1-\alpha)} = \frac{\Gamma(1-\alpha)}{\Gamma(2-2\alpha)}$ and noting that $\eta$ was arbitrary, proves the assertion.
\end{proof}

\subsection{Hitting time}

\begin{proof}[Proof of Theorem \ref{thm: boundary regular case}]
    Recall that $L$ has the form \eqref{eq: resolvent first kind}. Convolving \eqref{eq: VCIR} with $L$ yields 
    \[
        K(0)^{-1}X_t + \int_0^t L_0(t-s)X_s\, \mathrm{d}s = x_0 K(0)^{-1} + x_0 \int_0^t L_0(s)\, \mathrm{d}s + bt + \beta \int_0^t X_s\, \mathrm{d}s + \sigma \int_0^t \sqrt{X_s}\, \mathrm{d}B_s.
    \]
    Since $K \in C^1(\R_+)$, the process $X$ is a semimartingale. Multiplying by $K(0)$ gives the differential $\mathrm{d}X_t = D_t \, \mathrm{d}t + K(0)\sigma \sqrt{X_t}\, \mathrm{d}B_t$ with
    \[
    D_t = K(0)b + K(0)(\beta - L_0(0))X_t + x_0 K(0)L_0(t) - K(0)\int_0^t L_0'(t-s)X_s\, \mathrm{d}s.
    \]
    Since $K$ is completely monotone, also $L_0$ is completely monotone and hence $L_0 \geq 0$ and $L_0' \leq 0$. Fix $T \in (0,\infty]$ as in the assumptions. Since $X \geq 0$, we find $D_t \geq a + cX_t$ with
    \[
        a = K(0)b + x_0 K(0) L_0(T) \quad \text{ and } \quad c = K(0)(\beta - L_0(0)).
    \]
    An application of the comparison theorem \cite[Chapter VI, Theorem 1.1]{MR1011252}, yields the lower bound $X_t \geq Y_t$ on $[0,T]$, where $Y$ is the unique strong solution of
    \[
        \mathrm{d}Y_t = \left( a + cY_t\right)\, \mathrm{d}t + \sigma K(0) \sqrt{Y_t}\, \mathrm{d}B_t.
    \]
    Since $Y$ is a classical CIR process, its hitting time $\tau_0^Y$ of zero satisfies $\P[\tau_0^Y = \infty] = 1$ if and only if the Feller condition 
    \[
        1 \leq \frac{2( bK(0) + x_0K(0)L_0(T))}{\sigma^2 K(0)^2} = \frac{2(b + x_0 L_0(T))}{\sigma^2 K(0)}
    \]
    holds. Hence the assertion follows from
    \[
        \P\left[ \tau_0 \geq T \right] \geq \P[ \forall t \in (0,T): \ X_t > 0] \geq \P[ \forall t \in (0,T): \ Y_t > 0] = 1. \qedhere
    \]
\end{proof}

\subsection{Limit distribution}

Finally, we prove the boundary behaviour of the limit distribution.

\begin{proof}[Proof of Theorem \ref{thm: limit_distribution}]
    Using the affine transformation formula \eqref{eq: pi affine formula}, taking into account that $V_u(t) = - \psi(t;-u\delta_0)$, and finally noting~\eqref{eq: L convolution V}, we obtain
    \begin{align*}
        \int_{\R_+} \mathrm{e}^{-u x}\, \pi_{x_0}(\mathrm{d}x) 
        &= \exp\left( - x_0(L \ast V_u)(\infty) - b \int_0^{\infty}V_u(s)\, \mathrm{d}s \right)
        \\ &= \exp\left( - (x_0 L_0(\infty) + b)\int_0^{\infty} V_u(s)\, \mathrm{d}s\right),
    \end{align*}
    where the second equality follows from $V_u(t) \leq E_{\beta}(t)u \longrightarrow 0$ combined with
    \[
        (L \ast V_u)(\infty) = \lim_{t \to \infty}\left( K(0)^{-1}V_u(t) + \int_0^t L_0(t-s)V_u(s)\, \mathrm{d}s\right)
         = L_0(\infty)\int_0^{\infty}V_u(s)\, \mathrm{d}s.
    \]

    (a) Recall that Theorem \ref{Theorem: lower bound} provides the bound $V_u(t) \geq W_u(t)$. Hence we obtain the lower bound
    \begin{align*}
        \int_0^{\infty} V_u(t)\, \mathrm{d}t \geq \int_0^{\infty} W_u(t)\, \mathrm{d}t
        = \frac{2}{\sigma^2K(0)} \log(1 + u \theta(\infty)),
    \end{align*}
    where we have used \eqref{eq: Wu representation}, $\theta(0) = 0$, and since $\lambda = K(0)(\beta - L_0(0)) < 0$
    \[
        \theta(\infty) = \lim_{t \to \infty}\theta(t) = -\frac{\sigma^2 K(0)^2}{2\lambda} = \frac{\sigma^2 K(0)}{2(L_0(0) - \beta)}.
    \]
    
    Next, we derive an upper bound for $\int_0^{\infty} V_u(t)\, \mathrm{d}t$. Taking the limit $t \to \infty$ in \eqref{eq: L convolution V} and using $(L \ast V_u)(t) \longrightarrow L_0(\infty)\int_0^{\infty}V_u(t)\, \mathrm{d}t$, we obtain
    \[
        L_0(\infty) \int_0^{\infty}V_u(t)\, \mathrm{d}t = u + \int_0^{\infty} \left( \beta V_u(s) - \frac{\sigma^2}{2} V_u(s)^2 \right)\, \mathrm{d}s.
    \]
    and hence
    \begin{align*}
        (L_0(\infty) - \beta) \int_0^{\infty}V_u(t)\, \mathrm{d}t &= u - \frac{\sigma^2}{2} \int_0^\infty V_u(s)^2\, \mathrm{d}s
        \\ &\leq u - \frac{\sigma^2}{2} \int_0^\infty W_u(s)^2\, \mathrm{d}s
        \\ &= u + \int_0^{\infty} \left( \frac{W_u'(s)}{K(0)} - (\beta - L_0(0))W_u(s)\right)\, \mathrm{d}s
        \\ &= u + \frac{W_u(\infty) - W_u(0)}{K(0)} - (\beta - L_0(0))\int_0^{\infty} W_u(s)\, \mathrm{d}s
        \\ &= - \frac{2(\beta - L_0(0))}{\sigma^2 K(0)} \log\left( 1 + u \theta(\infty)\right)
    \end{align*}
    since $0 \leq W_u(t) \leq V_u(t)$ and hence $-V_u(t)^2 \leq - W_u(t)^2$, we have used $\frac{W_u'(t)}{K(0)} = (\beta - L_0(0)) W_u(t) - \frac{\sigma^2}{2} W_u(t)^2$ provided by \eqref{eq: Wu equation}, $W_u(\infty) = 0$, and $W_u(0) = K(0)u$. This proves the desired upper bound
    \[
        \int_0^{\infty}V_u(t)\, \mathrm{d}t \leq \frac{2}{\sigma^2 K(0)} \frac{L_0(0) - \beta}{L_0(\infty) - \beta} \log\left( 1 + u \theta(\infty) \right).
    \]
    Inserting the upper and lower bound for $\int_0^{\infty}V_u(t)\, \mathrm{d}t$ into the Laplace transform of the limit distribution gives 
    \[
        (1 + u \theta(\infty))^{-\gamma_-(\infty) \frac{L_0(0) - \beta}{L_0(\infty) - \beta}} \leq \int_{\mathbb{R}_+} \mathrm{e}^{-u x}\, \pi_{x_0}(\mathrm{d}x) \leq (1 + u \theta(\infty))^{-\gamma_-(\infty)},
    \]
    where $\gamma_-(\infty) = \frac{2(x_0 L_0(\infty) + b)}{\sigma^2 K(0)}$ and $\gamma_+(\infty) = \gamma_-(\infty) \frac{L_0(0) - \beta}{L_0(\infty) - \beta}$. Finally, using again the integral representation $x^{-p} = \frac{1}{\Gamma(p)} \int_0^\infty \mathrm{e}^{-ux} u^{p-1}\, \mathrm{d}u$ and Fubini's Theorem, gives the desired moment bounds. 

    (b) First we derive the lower bound on $\pi_{x_0}(\{0\})$. Note that $V_u(t) \nearrow V_{\infty}(t)$ for each $t > 0$ as $u \to \infty$. Let us first show that $V_{\infty} \in L^1(\R_+)$. Indeed, since for each $\eta \in (0,1)$ there exists $t_0 > 0$ such that $V_{\infty}(t) \leq (1+\eta)\frac{2C_{\alpha}}{\sigma^2} \frac{t^{-\alpha}}{\ell(t)}$ holds for $t \in (0,t_0)$ (see Theorem \ref{Theorem: upper bound rough}), it follows that $V_{\infty}$ is integrable on $[0,t_0)$. Using \eqref{eq: LVinfty}, we obtain for $t > t_0$
    \begin{align*}
        (-\beta)\int_{t_0}^t V_{\infty}(s)\, \mathrm{d}s = (L \ast V_\infty)(t_0) -  (L \ast V_\infty)(t) - \frac{\sigma^2}{2} \int_{t_0}^t V_\infty(s)^2 \, \mathrm{d}s
        \leq (L \ast V_{\infty})(t_0).
    \end{align*}
    Taking the limit $t \to \infty$ and noting that $\beta < 0$ yields
    \begin{align}\label{eq: PaulGassiat}
        \int_{t_0}^{\infty} V_{\infty}(t)\, \mathrm{d}t \leq (-\beta)^{-1}(L \ast V_{\infty})(t_0) < \infty
    \end{align}
    and hence proves $V_{\infty} \in L^1([t_0, \infty))$. Thus $V_{\infty} \in L^1(\R_+)$. By monotone convergence we find
    \begin{align*}
        \pi_{x_0}(\{0\}) 
        &= \lim_{u \to \infty} \int_{\R_+}\mathrm{e}^{-ux}\, \pi_{x_0}(\mathrm{d}x) 
        \\ &= \lim_{u \to \infty} \exp\left( - (x_0L_0(\infty) + b)\int_0^{\infty} V_u(t)\, \mathrm{d}t \right)
        \\ &= \exp\left( - (x_0 L_0(\infty) + b)\int_0^{\infty}V_{\infty}(t)\, \mathrm{d}t \right) > 0,
    \end{align*}
    which proves the first assertion. Likewise, we obtain from \eqref{eq: 1}
    \begin{align*}
        \P[X_t = 0] &= \lim_{u \to \infty} \E[\mathrm{e}^{-uX_t}]
        \\ &= \lim_{u \to \infty} \exp\left( - x_0 (L \ast V_{u})(t) - b \int_0^{t}V_u(s)\, \mathrm{d}s \right)
        \\ &= \exp\left( -x_0 (L \ast V_{\infty})(t) - b \int_0^t V_{\infty}(s)\, \mathrm{d}s\right) 
        \\ &\longrightarrow \exp\left( - (x_0 L_0(\infty) + b)\int_0^{\infty}V_{\infty}(s)\, \mathrm{d}s\right) = \pi_{x_0}(\{0\})
    \end{align*}
    as $t \to \infty$, where we have used $V_{\infty} \in L^1(\R_+)$ so that $(L \ast V_{\infty})(t) \longrightarrow L_0(\infty)\int_0^{\infty}V_{\infty}(t)\, \mathrm{d}t$.
\end{proof}

%%%%%%%%%%%%%%%%% Appendix

\appendix

\section{Auxiliary asymptotic results}

\subsection{Fractional Sobolev regularity}

\begin{Lemma}\label{lemma: fractional Sobolev}
    Let $K \in L_{\mathrm{loc}}^2(\R_+)$ satisfy \eqref{eq: K1} for some constant $\gamma \in (0,1)$. Then for each $\eta \in (0,\gamma)$, we find
    \[
        \int_0^T t^{-2\eta}|K(t)|^2\, \mathrm{d}t + \int_0^T \int_0^T \frac{|K(t) - K(s)|^2}{|t-s|^{1+2\eta}}\, \mathrm{d}s\mathrm{d}t < \infty.
    \]
\end{Lemma}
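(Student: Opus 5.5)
We need to show two finite quantities: a weighted $L^2$ norm of $K$ and a Gagliardo-type double integral. Both will come from the increment bound \eqref{eq: K1} by splitting into dyadic scales.

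\emph{The weighted term.} Set $F(h) := \int_0^h K(t)^2\,\mathrm{d}t \le C_T h^{2\gamma}$. We split $(0,T]$ into dyadic shells $(2^{-k-1}T, 2^{-k}T]$ for $k \geq 0$. On the $k$-th shell $t^{-2\eta} \le (2^{-k-1}T)^{-2\eta}$, so
\[
\int_0^T t^{-2\eta}K(t)^2\,\mathrm{d}t \le \sum_{k\ge0} (2^{-k-1}T)^{-2\eta}\,C_T (2^{-k}T)^{2\gamma} \lesssim \sum_{k\ge 0} 2^{-2k(\gamma-\eta)} < \infty,
\]
because $\eta<\gamma$. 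Equivalently, one can integrate by parts in the Stieltjes form $\int t^{-2\eta}\,\mathrm{d}F(t)$.

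\emph{The double integral.} By symmetry it suffices to bound twice the integral over $\{s<t\}$. Substituting $t = s+h$ with $h\in(0,T)$ gives
\[
2\int_0^T h^{-1-2\eta}\int_0^{T-h}|K(s+h)-K(s)|^2\,\mathrm{d}s\,\mathrm{d}h .
\]
The inner integral is at most $\int_0^T |K(s+h)-K(s)|^2\,\mathrm{d}s \le C_T h^{2\gamma}$ by \eqref{eq: K1}; here $K$ is taken on $[0,2T]$, which is harmless since \eqref{eq: K1} holds for every $T$. The outer integral is then bounded by $C_T\int_0^T h^{2\gamma-1-2\eta}\,\mathrm{d}h$, which is finite since $2\gamma-2\eta>0$.

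\emph{Where care is needed.} There is no substantial obstacle. The points to check are measurability and the Tonelli step, both justified by nonnegativity of the integrands, and using the constant $C_{T}$ (or $C_{2T}$) consistently with the domain of the translated kernel.
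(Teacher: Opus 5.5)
Your proof is correct and follows essentially the same route as the paper. The double integral is handled identically: symmetry, the substitution $h=t-s$, Tonelli, and \eqref{eq: K1} for the inner integral. For the weighted term, your dyadic-shell summation is a discretized version of the paper's Stieltjes integration by parts against $F(t)=\int_0^t K(s)^2\,\mathrm{d}s$; it uses the same bound $F(h)\le C_T h^{2\gamma}$, and it also avoids having to justify that the boundary term at $0$ vanishes.
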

\begin{proof}
    We first bound the weighted $L^2$ norm. Let $F(t) = \int_0^t K(s)^2 \, \mathrm{d}s$, then $F(t) \leq C_T t^{2\gamma}$ by assumption for $t \in [0,T]$ and some constant $C_T > 0$. Using integration by parts, we obtain
    \[
        \int_0^T t^{-2\eta} |K(t)|^2 \, \mathrm{d}t = \int_0^T t^{-2\eta} \, \mathrm{d}F(t) = \left[ t^{-2\eta} F(t) \right]_0^T + 2\eta \int_0^T t^{-2\eta-1} F(t) \, \mathrm{d}t.
    \]
    Because $F(t) \leq C_T t^{2\gamma}$ and $\gamma > \eta$, the boundary term at zero vanishes. The integral term is bounded by
    \[
        2\eta \int_0^T t^{-2\eta-1} F(t) \, \mathrm{d}t \leq 2\eta C_T \int_0^T t^{2\gamma - 2\eta - 1} \, \mathrm{d}t = C_T \frac{\eta}{\gamma - \eta} T^{2\gamma - 2\eta} < \infty.
    \]
    This proves that the first integral is finite.

    For the second term, we use symmetry to restrict the integral onto $\{h < t\}$, and then the substitution $h = t - s$ to find
    \begin{align*}
        \int_0^T \int_0^T \frac{|K(t) - K(s)|^2}{|t-s|^{1+2\eta}} \, \mathrm{d}s \mathrm{d}t 
        &= 2 \int_0^T \int_0^t \frac{|K(t) - K(t-h)|^2}{h^{1+2\eta}} \, \mathrm{d}h \mathrm{d}t
        \\ &= 2 \int_0^T \frac{1}{h^{1+2\eta}} \left( \int_h^T |K(t) - K(t-h)|^2 \, \mathrm{d}t \right) \mathrm{d}h
        \\ &= 2 \int_0^T \frac{1}{h^{1+2\eta}} \left( \int_0^{T-h} |K(u+h) - K(u)|^2 \, \mathrm{d}u \right)\, \mathrm{d}h
        \\ &\lesssim \int_0^T h^{2\gamma - 1 - 2\eta}\, \mathrm{d}h.
    \end{align*}
    The last integral is finite since $\gamma > \eta$, which proves the assertion.
\end{proof}

\subsection{Convolutions of regularly varying functions}

\begin{Lemma}\label{lemma: convolution regularly varying}
    Let $\alpha, \beta \in (0,1)$ and let $f,g: (0,T] \longrightarrow \mathbb{R}$ be given by
    \[
        f(t) = t^{\alpha - 1}\ell_f(t) \quad \text{and} \quad g(t) = t^{\beta - 1}\ell_g(t), \qquad t > 0
    \]
    where $\ell_f, \ell_g: (0,T] \longrightarrow \mathbb{R}$ are measurable, and slowly varying at $t = 0$. Then as $t \searrow 0$,
    \[
        \int_0^t f(t-s)g(s)\, \mathrm{d}s \sim t^{\alpha+\beta-1} \ell_f(t)\ell_g(t) B(\alpha, \beta),
    \]
    where $B(\alpha, \beta) = \int_0^1 (1-y)^{\alpha-1}y^{\beta-1}\,\mathrm{d}y$ is the Beta function.
\end{Lemma}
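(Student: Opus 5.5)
Substituting $s = ty$ writes the convolution as
\[
\int_0^t f(t-s)g(s)\,\mathrm{d}s = t^{\alpha+\beta-1}\ell_f(t)\ell_g(t)\int_0^1 (1-y)^{\alpha-1}y^{\beta-1}\,\frac{\ell_f(t(1-y))}{\ell_f(t)}\,\frac{\ell_g(ty)}{\ell_g(t)}\,\mathrm{d}y .
\]
It therefore suffices to show that the last integral converges to $B(\alpha,\beta)$ as $t \searrow 0$. For each fixed $y \in (0,1)$, slow variation at zero gives $\ell_f(t(1-y))/\ell_f(t) \to 1$ and $\ell_g(ty)/\ell_g(t) \to 1$. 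So the integrand converges pointwise to $(1-y)^{\alpha-1}y^{\beta-1}$. The whole task is to justify exchanging limit and integral, and I plan to do this by dominated convergence using Potter's bounds.

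\textbf{Domination.} Fix $\delta \in (0, \min\{\alpha,\beta\})$. Potter's theorem \cite[Theorem 1.5.6]{BGT87}, applied at zero (i.e.\ to $x \mapsto \ell(1/x)$ at infinity), gives a constant $A>1$ and a $t_0>0$ such that
\[
\frac{\ell(x)}{\ell(z)} \leq A \max\left\{ \left(\tfrac{x}{z}\right)^{\delta}, \left(\tfrac{x}{z}\right)^{-\delta}\right\}, \qquad x,z \in (0,t_0].
\]
For $t \in (0,t_0]$ and $y \in (0,1)$, both $ty$ and $t(1-y)$ lie in $(0,t_0]$, so
\[
\frac{\ell_g(ty)}{\ell_g(t)} \leq A y^{-\delta}, \qquad \frac{\ell_f(t(1-y))}{\ell_f(t)} \leq A(1-y)^{-\delta}.
\]
Only the $-\delta$ branch is needed, since the ratio of arguments is at most $1$. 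The integrand is therefore bounded by $A^2 (1-y)^{\alpha-\delta-1}y^{\beta-\delta-1}$. This is integrable on $(0,1)$ because $\alpha-\delta>0$ and $\beta-\delta>0$, and dominated convergence gives the claim.

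\textbf{Obstacle.} The main issue is that Potter's bounds need $\ell_f,\ell_g$ to be positive near zero, or at least of eventually constant sign. The statement only says they are real-valued, measurable and slowly varying. Slow variation in the standard sense already forces $\ell(x)/\ell(z)$ to be eventually positive, so I would restrict to $(0,t_0]$ where both functions have constant sign. If needed, I would replace each by $|\ell|$ and pull out the sign. Everything else is the routine dominated convergence above, where measurability keeps all integrals well-defined.
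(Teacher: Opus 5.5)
Your proposal is correct and follows the paper's proof essentially step for step. Both arguments substitute $s=ty$, get pointwise convergence of the normalized integrand from slow variation, and dominate it via Potter's bounds with $\delta<\min(\alpha,\beta)$, which gives the integrable majorant $A^2(1-y)^{\alpha-1-\delta}y^{\beta-1-\delta}$ and lets dominated convergence finish. The paper handles the sign issue the same way you do, by assuming without loss of generality that $\ell_f,\ell_g$ are strictly positive in a neighbourhood of zero.
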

\begin{proof}
    Since $\ell_f, \ell_g$ are slowly varying at zero, we may asume without loss of generality that they are stricly positive in a neighbourhood of zero. Using the substitution $s = ty$, so that $\mathrm{d}s = t\,\mathrm{d}y$, we find 
    \begin{align*}
        \int_0^t f(t-s)g(s)\, \mathrm{d}s 
        &= t^{\alpha+\beta-1} \ell_f(t)\ell_g(t) \int_0^1 (1-y)^{\alpha-1} y^{\beta-1} \frac{\ell_f(t(1-y))}{\ell_f(t)} \frac{\ell_g(ty)}{\ell_g(t)} \, \mathrm{d}y.
    \end{align*}
    Let $I_t(y)$ denote the integrand
    \[
        I_t(y) = (1-y)^{\alpha-1} y^{\beta-1} \frac{\ell_f(t(1-y))}{\ell_f(t)} \frac{\ell_g(ty)}{\ell_g(t)}.
    \]
    Since $\ell_f$ and $\ell_g$ are slowly varying at zero, we find $I_t(y) \longrightarrow (1-y)^{\alpha-1}y^{\beta-1}$ as $t \searrow 0$ with fixed $y \in (0,1)$. Potter's Theorem for slowly varying functions (see \cite[Theorem 1.5.6]{BGT87}) guarantees that for any $\delta > 0$, there exists a constant $A > 1$ and a threshold $t_0 > 0$ such that for all $x, y \in (0, t_0)$,
    \[
        \frac{\ell_f(x)}{\ell_f(y)} \leq A \max\left( \left(\frac{x}{y}\right)^\delta, \left(\frac{x}{y}\right)^{-\delta} \right)
    \]
    Take $0 < \delta < \min(\alpha, \beta)$ and let $t \in (0,t_0)$. Then we obtain for $y \in (0,1)$
    \[
            \frac{\ell_f(t(1-y))}{\ell_f(t)} \leq A (1-y)^{-\delta} \ \text{ and } \ \frac{\ell_g(ty)}{\ell_g(t)} \leq A y^{-\delta}.
    \]
    This provides the integrable upper bound $|I_t(y)| \leq A^2 (1-y)^{\alpha-1-\delta} y^{\beta-1-\delta}$ for $t \in (0,t_0)$ and $y \in (0,1)$. Thus, an application of dominated convergence, gives
    \[
        \lim_{t \searrow 0} \int_0^1 I_t(y) \, \mathrm{d}y = \int_0^1 (1-y)^{\alpha-1} y^{\beta-1} \, \mathrm{d}y = B(\alpha, \beta),
    \]
    and proves the assertion.
\end{proof}

In the next lemma we prove, under additional conditions, analogous asymptotics for the derivative of the convolution. For the definition and properties of smooth
variation, we refer to Section~1.8 in~\cite{BGT87}.

\begin{Lemma}\label{lemma: convolution derivative asymptotics}
    Let $\alpha, \beta \in (0,1)$ with $\alpha + \beta \neq 1$, and let $f,g: (0,T] \longrightarrow \mathbb{R}_+$ be given by
    \[
        f(t) = t^{\alpha - 1}\ell_f(t) \quad \text{and} \quad g(t) = t^{\beta - 1}\ell_g(t), \qquad t > 0,
    \]
    where $\ell_f, \ell_g: (0,T] \longrightarrow \mathbb{R}$ are measurable and smoothly varying at $t = 0$. Then as $t \searrow 0$,
    \[
    \frac{\mathrm{d}}{\mathrm{d}t} \int_0^t f(t-s)g(s)\, \mathrm{d}s \sim (\alpha+\beta-1) B(\alpha, \beta) t^{\alpha+\beta-2} \ell_f(t) \ell_g(t) \quad \text{as } t \searrow 0,
    \]
\end{Lemma}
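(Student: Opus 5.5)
Write $F(t) = \int_0^t f(t-s)g(s)\,\mathrm{d}s$. Substituting $s = ty$ as in the proof of Lemma \ref{lemma: convolution regularly varying} gives
\[
F(t) = t^{\alpha+\beta-1}\int_0^1 (1-y)^{\alpha-1}y^{\beta-1}\ell_f(t(1-y))\ell_g(ty)\,\mathrm{d}y .
\]
The plan is to differentiate this representation in $t$ under the integral sign. Avoid differentiating $F$ directly: the integrand $f(t-s)$ is singular at $s=t$, and the boundary term would blow up.

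By the product rule, $F'(t) = (\alpha+\beta-1)t^{\alpha+\beta-2}J(t) + t^{\alpha+\beta-1}J'(t)$, where
\[
J(t) = \int_0^1 (1-y)^{\alpha-1}y^{\beta-1}\ell_f(t(1-y))\ell_g(ty)\,\mathrm{d}y .
\]
Lemma \ref{lemma: convolution regularly varying} already gives $J(t)\sim B(\alpha,\beta)\ell_f(t)\ell_g(t)$. It therefore remains to show
\[
tJ'(t) = o(\ell_f(t)\ell_g(t)).
\]
Because $\alpha+\beta\neq 1$, this term is then negligible compared with the leading one, and the claim follows.

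For $J'$, differentiate inside the integral:
\[
tJ'(t) = \int_0^1 (1-y)^{\alpha-1}y^{\beta-1}\Big[ u\ell_f'(u)\big|_{u=t(1-y)}\,\ell_g(ty) + \ell_f(t(1-y))\, v\ell_g'(v)\big|_{v=ty}\Big]\,\mathrm{d}y .
\]
Here we used $t\,\partial_t[\ell_f(t(1-y))] = u\ell_f'(u)$ with $u=t(1-y)$, and similarly for $\ell_g$. Smooth variation gives $u\ell_f'(u) = \varepsilon_f(u)\ell_f(u)$ with $\varepsilon_f(u)\to0$ as $u\searrow0$, and likewise $v\ell_g'(v)=\varepsilon_g(v)\ell_g(v)$. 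Dividing by $\ell_f(t)\ell_g(t)$, the integrand becomes
\[
(1-y)^{\alpha-1}y^{\beta-1}\frac{\ell_f(t(1-y))}{\ell_f(t)}\frac{\ell_g(ty)}{\ell_g(t)}\big[\varepsilon_f(t(1-y))+\varepsilon_g(ty)\big].
\]
This tends to $0$ pointwise for $y\in(0,1)$. The Potter bounds used in Lemma \ref{lemma: convolution regularly varying}, together with boundedness of $\varepsilon_f,\varepsilon_g$ near zero, dominate it by $C(1-y)^{\alpha-1-\delta}y^{\beta-1-\delta}$, which is integrable. Dominated convergence then yields $tJ'(t)=o(\ell_f(t)\ell_g(t))$.

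The main obstacle is justifying differentiation under the integral sign, including the fact that $F$ is differentiable at all. I would handle this by proving local uniform domination of the $t$-derivative of the integrand on compact subintervals $[t_1,t_2]\subset(0,t_0)$. The same Potter-type bound works here, uniformly in $t\in[t_1,t_2]$, because $t(1-y)$ and $ty$ stay in $(0,t_0)$. Standard differentiation under the integral (or the fundamental theorem of calculus applied to the integrated derivative, plus Fubini) then gives $F\in C^1((0,t_0))$ with the formula above.

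Smooth variation in the sense of \cite[Section 1.8]{BGT87} includes $C^\infty$ regularity of $\ell$ near $0$, so $\varepsilon_f,\varepsilon_g$ are continuous and bounded on $(0,t_0]$. If $\ell_f,\ell_g$ are only measurable away from $0$, the statement concerns $t\searrow0$ only, so we restrict everything to $(0,t_0)$.
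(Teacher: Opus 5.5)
Your proof is correct, and it takes a different route from the paper.

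The paper keeps the convolution in the original variable. It symmetrises it as $U(t)=\int_0^{t/2}f(t-s)g(s)\,\mathrm{d}s+\int_0^{t/2}f(s)g(t-s)\,\mathrm{d}s$, which keeps the singularities away from the moving endpoint. Differentiating gives two integrals involving $f'$ and $g'$, plus the boundary term $f(t/2)g(t/2)$. Each piece is then handled by dominated convergence, using $tf'(t)/f(t)\to\alpha-1$ and $tg'(t)/g(t)\to\beta-1$ together with Potter bounds. The resulting constant is a sum of two incomplete Beta-type integrals and $2^{2-\alpha-\beta}$. Its identification with $(\alpha+\beta-1)B(\alpha,\beta)$ is left to the reader, and differentiation under the integral sign is not explicitly justified.

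You rescale to the fixed domain $[0,1]$ first, so all $t$-dependence sits in the prefactor $t^{\alpha+\beta-1}$ and in the slowly varying factors. The product rule then produces the constant $(\alpha+\beta-1)B(\alpha,\beta)$ directly from Lemma \ref{lemma: convolution regularly varying}. The remainder $tJ'(t)$ is negligible by $t\ell'(t)/\ell(t)\to 0$ and the same Potter domination.

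Your route has several advantages:
\begin{itemize}
\item it is shorter and reuses the preceding lemma;
\item it shows exactly where $\alpha+\beta\neq 1$ enters, namely the non-vanishing of the leading coefficient;
\item it supplies the locally uniform dominating function needed to differentiate under the integral sign, which the paper passes over.
\end{itemize}

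The paper's symmetric splitting is the more classical device for differentiating weakly singular convolutions. It does not rely on factoring out the scaling, but it costs the extra constant computation. Both arguments use the same hypotheses: positivity and $C^1$-smooth variation of $\ell_f,\ell_g$ near $0$.
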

\begin{proof}
    Firstly, since $\ell_f, \ell_g$ are smoothly varying, they satisfy by definition
    \[
        \lim_{t \searrow 0} \frac{t \ell_f'(t)}{\ell_f(t)} = 0 \quad \text{ and } \quad \lim_{t \searrow 0} \frac{t \ell_g'(t)}{\ell_g(t)} = 0.
    \]
    In particular, $f,g$ are smoothly varying and hence we find
    \begin{align}\label{eq: 5}
        \lim_{t \searrow 0}\frac{t f'(t)}{f(t)} = \alpha-1 \quad \text{ and } \quad \lim_{t \searrow 0}\frac{t g'(t)}{g(t)} = \beta - 1.
    \end{align}    
    Because $f$ and $g$ have a singularity in zero, to differentiate under the integral, we write $U(t) := \int_0^t f(t-s)g(s)\, \mathrm{d}s = \int_0^{t/2} f(t-s)g(s) \, \mathrm{d}s + \int_0^{t/2} f(s)g(t-s) \, \mathrm{d}s$, which yields for $t > 0$
\begin{align*}
    U'(t) &= \int_0^{t/2} f'(t-s)g(s) \, \mathrm{d}s + \int_0^{t/2} f(s)g'(t-s) \, \mathrm{d}s + f\left(\frac{t}{2}\right)g\left(\frac{t}{2}\right)
    \\ &= t \int_0^{1/2} f'(t(1-u)) g(tu) \, \mathrm{d}u + t \int_0^{1/2} f(tu) g'(t(1-u)) \, \mathrm{d}u 
    + f\left(\frac{t}{2}\right)g\left(\frac{t}{2}\right),
\end{align*}
where we have used the substitution $s = tu$ with $\mathrm{d}s = t \, \mathrm{d}u$. We divide $U'(t)$ by $t^{\alpha + \beta-2} \ell_f(t) \ell_g(t)$, and evaluate the pointwise limit of the integrands. For $I_1(t) = t \int_0^{1/2}f'(t(1-u))g(tu)\, \mathrm{d}u$, the normalized integrand satisfies for each $u \in (0,1/2)$
\begin{align*}
    H_{1,t}(u) &:= \frac{f'(t(1-u))}{t^{\alpha-2} \ell_f(t)} \frac{g(tu)}{t^{\beta-1} \ell_g(t)}
    \\ &= \frac{\ell_f(t(1-u))}{\ell_f(t)}\frac{tf'(t(1-u))}{f(t(1-u))} \frac{\ell_g(tu)}{\ell_g(t)} (1-u)^{\alpha-1} u^{\beta - 1}
    \longrightarrow (\alpha - 1)(1-u)^{\alpha - 2} u^{\beta - 1}
\end{align*}
since $\ell_f,\ell_g$ are slowly varying, and \eqref{eq: 5} gives $tf'(t(1-u))/f(t(1-u)) \longrightarrow (\alpha-1)/(1-u)$. To pass the limit inside the integral, by Potter's Bounds, for any chosen $\varepsilon > 0$, there exist constants $A_f, A_g > 0$ and $t_0 > 0$ such that for all $t \in (0, t_0]$ and $u \in (0, 1/2]$:
\[
    \left| \frac{\ell_f(t(1-u))}{\ell_f(t)} \right| \le A_f (1-u)^{-\varepsilon}, \quad \left| \frac{\ell_g(tu)}{\ell_g(t)} \right| \le A_g u^{-\varepsilon}.
\]
Since $1-u \ge 1/2$ for $u \in [0, 1/2]$, there exist constants $K_1, K_2$ such that
\[
    \left| \frac{f'(t(1-u))}{t^{\alpha-2} \ell_f(t)} \right| \le K_1 (1-u)^{\alpha-2-\varepsilon} \le K_1 2^{2-\alpha+\varepsilon} \ \text{ and } \  \left| \frac{g(tu)}{t^{\beta-1} \ell_g(t)} \right| \le K_2 u^{\beta-1-\varepsilon}.
\]
Consequently, $|H_{1,t}(u)|$ is uniformly dominated by an integrable function $C u^{\beta-1-\varepsilon}$. Since $\beta > 0$, we can choose $\varepsilon < \beta$ so that $\beta-1-\varepsilon > -1$, which guarantees integrability on $[0, 1/2]$. By the Dominated Convergence Theorem:
\[
    \lim_{t \searrow 0} \frac{I_1(t)}{t^{\alpha + \beta-2} \ell_f(t) \ell_g(t)} = \int_0^{1/2} (\alpha-1)(1-u)^{\alpha-2} u^{\beta-1} \, \mathrm{d}u.
\]
Applying the same argument to $I_2(t) = t\int_0^{1/2}f(tu)g'(t(1-u))\, \mathrm{d}u$, and choosing $\varepsilon < \alpha$ so that $\alpha - 1 - \varepsilon > -1$, shows that
\[
    \lim_{t \searrow 0} \frac{I_2(t)}{t^{\alpha + \beta-2} \ell_f(t) \ell_g(t)} = \int_0^{1/2}(\beta-1)(1-u)^{\beta - 2}u^{\alpha-1}\, \mathrm{d}u.
\]
Finally, using again the slowly varying property of $\ell_f, \ell_g$, we find
\[
    \frac{f\left( \frac{t}{2}\right) g\left( \frac{t}{2}\right)}{t^{\alpha+\beta - 2}\ell_f(t) \ell_g(t)}
    = 2^{2-\alpha - \beta} \frac{\ell_f\left( \frac{t}{2}\right)}{\ell_f(t)}\frac{\ell_g\left(\frac{t}{2}\right)}{\ell_g(t)} \longrightarrow 2^{2 - \alpha - \beta}.
\]
Hence we obtain $U'(t) \sim C t^{\alpha+\beta-2} \ell_f(t) \ell_g(t)$ as $t \searrow 0$ with the constant $C$ given by
\begin{align*}
    C = \int_0^{1/2} (\alpha-1)(1-u)^{\alpha-2} u^{\beta-1} \, \mathrm{d}u + \int_0^{1/2}(\beta-1)(1-u)^{\beta - 2}u^{\alpha-1}\, \mathrm{d}u + 2^{2-\alpha - \beta}.
\end{align*}
One can verify that $C = (\alpha + \beta - 1)B(\alpha,\beta)$, which proves the assertion.
\end{proof}

\subsection{Asymptotics of the resolvent of the first kind}

The next lemma determines the asymptotics of $L$ in terms of the asymptotics of $K$. 

\begin{Lemma}\label{lemma: first kind asymptotics}
    Let $0 \neq K \in L^1_{\mathrm{loc}}(\R_+)$ is completely monotone. Let $L$ be the corresponding resolvent of the first kind. The following assertions hold.
    \begin{enumerate}
        \item[(a)] Suppose that $K$ is regularly varying of the form \eqref{eq: regularly varying} with $\alpha \in (0,1)$ and $\ell$ slowly varying at zero. Then 
        \[
            L_0(t) \sim \frac{1}{\Gamma(1-\alpha)}\frac{t^{-\alpha}}{\ell(t)}, \qquad t \searrow 0.
        \]
        \item[(b)] Suppose that $K(0^+) < \infty$ and $-K'(t) \sim t^{\alpha - 1}\ell(t)$ for $\alpha \in (0,1)$ and a slowly varying function~$\ell$. Then 
        \[
            L_0(t) \sim \frac{|K'(t)|}{K(0^+)^2}, \qquad t \searrow 0.
        \]
        \item[(c)] Suppose that $K(0^+) < \infty$ and $|K'(0^+)| < \infty$. Then 
        \[
            L_0(0^+) = \frac{|K'(0^+)|}{K(0^+)^2}.
        \]
        \item[(d)] One has
        \[
            L_0(\infty) = \left( \int_0^{\infty}K(t)\, \mathrm{d}t \right)^{-1}.
        \]
    \end{enumerate}
\end{Lemma}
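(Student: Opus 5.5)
The natural tool here is the Laplace transform. Since $K$ is completely monotone and nonzero, Bernstein's theorem gives $K(t)=\int_{[0,\infty)}e^{-xt}\,\nu(\mathrm{d}x)$. Its Laplace transform $\widehat K(z)$ is finite for $z>0$, and the identity $K\ast L=1$ translates into
\[
\widehat L(z)=\frac{1}{z\widehat K(z)},\qquad \widehat L(z)=K(0^+)^{-1}+\widehat{L_0}(z).
\]
All four assertions are Abelian--Tauberian statements about $\widehat{L_0}$, either as $z\to\infty$ (behaviour of $L_0$ at $t\searrow 0$) or as $z\searrow 0$ (behaviour at $t\to\infty$). Because $L_0$ is completely monotone, hence monotone, Karamata's Tauberian theorem together with the monotone density theorem \cite[Theorems 1.7.1, 1.7.2]{BGT87} will let us pass from asymptotics of $\widehat{L_0}$ back to pointwise asymptotics of $L_0$.

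\emph{Part (a).} Here $K(0^+)=\infty$, so $L=L_0\,\mathrm{d}t$. Karamata's Abelian theorem at $z\to\infty$ gives $\widehat K(z)\sim z^{-\alpha}\ell(1/z)$. Hence
\[
\widehat{L_0}(z)\sim \frac{z^{\alpha-1}}{\ell(1/z)}.
\]
The function $\int_0^t L_0$ is monotone, so the Tauberian theorem yields $\int_0^t L_0\sim \frac{t^{1-\alpha}}{\Gamma(2-\alpha)\ell(t)}$. Since $L_0$ is monotone, the monotone density theorem then gives $L_0(t)\sim \frac{(1-\alpha)t^{-\alpha}}{\Gamma(2-\alpha)\ell(t)}=\frac{t^{-\alpha}}{\Gamma(1-\alpha)\ell(t)}$.

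\emph{Parts (b) and (c).} Write $K(t)=K_0-\int_0^t k(s)\,\mathrm{d}s$ with $K_0=K(0^+)$ and $k=-K'\ge 0$. Then $z\widehat K(z)=K_0-\widehat k(z)$, and the transform identity becomes
\[
\widehat{L_0}(z)=\frac{1}{K_0-\widehat k(z)}-\frac1{K_0}=\frac{\widehat k(z)}{K_0(K_0-\widehat k(z))}.
\]
Since $\widehat k(z)\to0$ as $z\to\infty$ by dominated convergence, this gives $\widehat{L_0}(z)\sim \widehat k(z)/K_0^2$.

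For (b), Karamata's Abelian theorem gives $\widehat k(z)\sim\Gamma(\alpha)z^{-\alpha}\ell(1/z)$. The Tauberian theorem combined with the monotone density theorem, exactly as in (a), then transfers this to $L_0(t)\sim k(t)/K_0^2$.

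For (c), $k$ is monotone with finite limit $k(0^+)$, so $z\widehat k(z)\to k(0^+)$. Likewise $L_0$ is monotone, so $z\widehat{L_0}(z)\to L_0(0^+)$, whether that limit is finite or infinite. Multiplying the displayed identity by $z$ and letting $z\to\infty$ gives $L_0(0^+)=k(0^+)/K_0^2$.

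\emph{Part (d).} Let $z\searrow0$. Then $z\widehat L(z)=1/\widehat K(z)\to(\int_0^\infty K)^{-1}$ by monotone convergence, with the convention that this is $0$ if $K\notin L^1$. The contribution of the atom $K_0^{-1}z$ vanishes in this limit, so $z\widehat{L_0}(z)$ has the same limit. By the final value theorem for the monotone function $L_0$, the limit of $z\widehat{L_0}(z)$ as $z\searrow 0$ equals $L_0(\infty)$, which proves (d).

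\emph{Main obstacle.} The delicate step is justifying the Tauberian direction in (a) and (b). The remedy is to apply the Tauberian theorem to the monotone primitive $\int_0^t L_0$ rather than to $L_0$ itself, and only then invoke the monotone density theorem. This requires $L_0$ to be eventually monotone near $0$, which holds because $L_0$ is completely monotone \cite[Chapter 5, Theorem 5.4]{grippenberg}. With that in place, the rest is bookkeeping of the Gamma-function constants.
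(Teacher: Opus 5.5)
Your proposal is correct and follows essentially the same route as the paper. Both use the transform identity $\widehat L(z)=1/(z\widehat K(z))$; for (a) and (b), Karamata's Tauberian theorem applied to the monotone primitive $\int_0^t L_0$ followed by the monotone density theorem; the initial value theorem for (c); and the final value theorem together with monotone convergence of $\widehat K(z)$ for (d). The differences are minor: you spell out the Abelian--Tauberian--monotone-density chain in (b) that the paper compresses into a single citation, and in (c) you pass to the limit in the exact identity $z\widehat{L_0}(z)=z\widehat k(z)/\bigl(K_0(K_0-\widehat k(z))\bigr)$ rather than in an asymptotic equivalence, which also covers the degenerate case $k(0^+)=0$.
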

\begin{proof}
    (a) Karamata's Tauberian Theorem \cite[Theorem 1.7.6]{BGT87} applied to $K(t) = \frac{t^{\alpha-1}}{\Gamma(\alpha)}\ell(t)$ gives for its Laplace transform the asymptotics $\widehat{K}(z) \sim z^{-\alpha}\ell(1/z)$ as $z \to \infty$. Hence, because $K(0^+) = \infty$, we have $\widehat{L}(z) = \widehat{L}_0(z)$ and the resolvent identity yields
    \[
        \widehat{L}_0(z) = \frac{1}{z \widehat{K}(z)} \sim \frac{z^{\alpha-1}}{\ell(1/z)}, \qquad z \to \infty.
    \]
    Set $f(t) = \int_0^t L_0(s)\, \mathrm{d}s$. Then $\widehat{f}(z) = \widehat{L}_0(z)/z \sim z^{\alpha-2}/ \ell(1/z)$. Karamata's Tauberian Theorem yields
    \[
        f(t) \sim \frac{1}{\Gamma(2-\alpha)}\frac{t^{1-\alpha}}{\ell(t)}, \qquad t \searrow 0.
    \]
    Since $L_0$ is completely monotone, it is strictly decreasing. We may apply the Monotone Density Theorem (differentiating the asymptotic equivalence) and use $(1-\alpha)/\Gamma(2-\alpha) = 1/\Gamma(1-\alpha)$ to conclude the assertion.

    (b) Since $K(0^+) < \infty$ and $K$ is not identically zero, we find $K(0^+) = K(0) > 0$. The definition of $L$ yields $\widehat{L}(z) = \frac{1}{K(0)} + \widehat{L}_0(z)$. By the Volterra resolvent identity, $\widehat{L}(z) = \frac{1}{z\widehat{K}(z)}$. Using integration by parts on the Laplace transform of $K$, we obtain $z\widehat{K}(z) = K(0) + \widehat{K'}(z)$. Equating these expressions yields
    \[
        \frac{1}{K(0)} + \widehat{L}_0(z) = \frac{1}{K(0) + \widehat{K'}(z)} = \frac{1}{K(0)} \left( 1 + \frac{\widehat{K'}(z)}{K(0)} \right)^{-1}.
    \]
    Because $K$ is completely monotone, $-K' \geq 0$ and $\widehat{K'}(z) \to 0$ as $z \to \infty$. Expanding the right-hand side gives
    \[
        \widehat{L}_0(z) \sim - \frac{\widehat{K'}(z)}{K(0)^2}, \qquad z \to \infty.
    \]
    Since both $L_0$ and $-K'$ are completely monotone, and $-K'$ is regularly varying, an application of \cite[Theorem 1.7.6]{BGT87} shows that $L_0(t) \sim \frac{-K'(t)}{K(0)^2} = \frac{|K'(t)|}{K(0)^2}$ as $t \searrow 0$. 

    (c) Finally, if $K(0^+) < \infty$ and $|K'(0^+)| < \infty$, applying the Initial Value Theorem yields
    \[
        L_0(0^+) = \lim_{z \to \infty} z \widehat{L}_0(z) = \lim_{z \to \infty} \left( - \frac{z\widehat{K'}(z)}{K(0)^2} \right) = - \frac{K'(0^+)}{K(0)^2} = \frac{|K'(0^+)|}{K(0^+)^2}. 
    \]

(d) Recall from the definition of the resolvent of the first kind that $\widehat{L}(z) = \frac{1}{z \widehat{K}(z)}$. Since $L(\mathrm{d}t) = K(0)^{-1}\delta_0(\mathrm{d}t) + L_0(t)\mathrm{d}t$, its Laplace transform is given by $\widehat{L}(z) = K(0)^{-1} + \widehat{L}_0(z)$. Hence we obtain
    \[
        z\widehat{L}_0(z) = \frac{1}{\widehat{K}(z)} - z K(0)^{-1}.
    \]
    Since $L_0$ is completely monotone, it is nonincreasing and hence an application of the Final Value Theorem for Laplace transforms yields $L_0(\infty) = \lim_{z \searrow 0} z\widehat{L}_0(z)$. Because $K(t) \geq 0$, the Monotone Convergence Theorem implies that $\lim_{z \searrow 0} \widehat{K}(z) = \int_0^{\infty} K(t)\,\mathrm{d}t \in (0, \infty]$. Hence we obtain with the usual convention $\infty^{-1} = 0$ the desired relation.
\end{proof}

\section{Comparison principles for generalised Riemann-Liouville equations}\label{se:comp}

Let $L \in L^1(0, T) \cap C^1(0, T]$ be a kernel satisfying 
\[
    L(t) > 0 \ \text{ and } \ L'(t) \leq 0 \ \ \text{ for all } t \in (0,T] \ \text{ and } \ \lim_{t \searrow 0}tL(t) = 0,
\]
and
\begin{equation}\label{eq: appendix 1}
    \lim_{h \to 0^+} \frac{1}{h} \int_0^h L(u) u^\gamma \, \mathrm{d}u = 0.
\end{equation}
The standard fractional kernel $L(t) = \frac{t^{-\alpha}}{\Gamma(1-\alpha)}$ for $\alpha \in (0, 1)$ is one example associated with the fractional Riemann-Liouville kernel. More generally, any completely monotone $L$ this is regularly varying at zero with index $\alpha - 1 < 0$ satisfies these conditions. 

As a first step, we prove an extremum principle for functions in $S_{L,\gamma}$ as introduced below.

\begin{Definition}
    For $\gamma \in (0,1]$, the function space $S_{L, \gamma}$ consists of all functions $f \in C_{\mathrm{loc}}^{\gamma}(0, T] \cap L^1(0, T)$ such that $L \ast f \in C^1(0,T])$. For $f \in S_{L,\gamma}$, the generalized fractional derivative is defined for $t \in (0, T]$ as:
    \begin{equation}
        (D_L f)(t) = \frac{\mathrm{d}}{\mathrm{d}t} \int_0^t L(t-s)f(s)\, \mathrm{d}s
    \end{equation}
\end{Definition}
For $L(t) = t^{-\alpha}/\Gamma(1-\alpha)$ with $\alpha \in (\frac{1}{2},1)$, condition \eqref{eq: appendix 1} requires that $\gamma > \alpha$. 

\begin{Theorem}[Extremum Principle for rough functions] \label{thm:extremum}
Let $f \in S_{L, \gamma}$. If $f(t) \geq 0$ for all $t \in (0, t_*]$ and $f(t_*) = 0$ for some $t_* \in (0, T)$, then
\begin{equation}
    (D_L f)(t_*) \le 0.
\end{equation}
\end{Theorem}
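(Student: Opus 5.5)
We compute the derivative of $F(t) := (L\ast f)(t) = \int_0^t L(t-s)f(s)\,\mathrm{d}s$ at $t_*$ directly, using a one-sided (backward) difference quotient. Since $F \in C^1(0,T]$ by assumption $f \in S_{L,\gamma}$, we have $(D_Lf)(t_*) = \lim_{h\searrow 0} \frac{F(t_*) - F(t_*-h)}{h}$. So it suffices to show
\[
\limsup_{h \searrow 0} \frac{F(t_*)-F(t_*-h)}{h} \leq 0.
\]

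\textbf{Splitting the difference.} We write
\begin{align*}
F(t_*) - F(t_*-h) &= \int_0^{t_*-h} \bigl[L(t_*-s) - L(t_*-h-s)\bigr] f(s)\,\mathrm{d}s + \int_{t_*-h}^{t_*} L(t_*-s) f(s)\,\mathrm{d}s \\
&=: A_h + B_h .
\end{align*}
For $A_h$: since $L' \le 0$, $L$ is nonincreasing, so $L(t_*-s) - L(t_*-h-s) \le 0$ for $s\in(0,t_*-h)$. Because $f\ge 0$ on $(0,t_*]$, this gives $A_h \le 0$. This is the only place where the sign hypothesis on $(0,t_*]$ and the monotonicity of $L$ enter, and $A_h$ can simply be dropped.

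\textbf{The boundary term.} For $B_h$ we use $f(t_*)=0$ together with local H\"older continuity. Since $f \in C^{\gamma}$ on a neighbourhood $[t_*/2, t_*]$, there is $C>0$ with $|f(s)| = |f(s)-f(t_*)| \le C (t_*-s)^\gamma$. Substituting $u = t_*-s$ gives
\[
|B_h| \le C \int_0^h L(u) u^{\gamma}\,\mathrm{d}u ,
\]
and therefore $B_h/h \to 0$ by condition \eqref{eq: appendix 1}. (Equivalently, $B_h \ge 0$ and it is $o(h)$.) Combining the two bounds, $\frac{F(t_*)-F(t_*-h)}{h} \le B_h/h \to 0$, which yields $(D_Lf)(t_*) \le 0$.

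\textbf{Potential obstacle.} The only subtle point is justifying that the derivative can be computed by the left difference quotient and that all integrals are finite. Differentiability of $F$ at $t_*$ is built into the definition of $S_{L,\gamma}$, so the left limit equals the derivative. The integrals converge because $f\in L^1(0,T)$, $L$ is bounded on $[h',T]$ for every $h'>0$, and $L$ is integrable near $0$. In $A_h$, the integrand $L(t_*-h-s)f(s)$ is integrable: near $s = t_*-h$ we use that $f$ is bounded near $t_*$, and near $s = 0$ we use that $L$ is bounded there. Hence the splitting is legitimate and the argument closes. The hypothesis $\lim_{t\searrow0} tL(t)=0$ is not needed here; it presumably serves later comparison results.
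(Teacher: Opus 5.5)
Your proof is correct and follows the paper's argument essentially step for step. You use the same backward difference quotient of $L\ast f$ at $t_*$, the same split into a nonpositive memory term (via monotonicity of $L$ and $f\ge 0$ on $(0,t_*]$), and the same local term controlled by $|f(t_*-u)|\le Cu^{\gamma}$ together with condition \eqref{eq: appendix 1}.
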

\begin{proof}
Let $D(t) = \int_0^t L(t-s)f(s)\, \mathrm{d}s$. Because $D \in C^1(0, T]$ by assumption, its derivative at $t_*$ exists and coincides with its left-hand derivative
\begin{equation}
    (D_L f)(t_*) = D'(t_*) = \lim_{h \to 0^+} \frac{D(t_*) - D(t_*-h)}{h}.
\end{equation}
Splitting the integrals, we obtain for the right-hand side
\begin{align*}
    D(t_*) - D(t_*-h) &= \int_0^{t_*-h} \big[ L(t_*-s) - L(t_*-h-s) \big] f(s)\, \mathrm{d}s + \int_{t_*-h}^{t_*} L(t_*-s)f(s)\, \mathrm{d}s
    \\ &\leq \int_{t_*-h}^{t_*} L(t_*-s)f(s)\, \mathrm{d}s,
\end{align*}
where we have used $t_*-s > t_*-h-s$ so that $L(t_*-s) - L(t_*-h-s) \le 0$ since $L$ is nonincreasing, and that $f(s) \ge 0$ for $s \in (0, t_*]$. Changing variables via $u = t_* - s$, we obtain:
\begin{equation*}
    \frac{D(t_*) - D(t_*-h)}{h} \le \frac{1}{h} \int_0^h L(u)f(t_*-u)\, \mathrm{d}u
\end{equation*}
Because $f(t_*) = 0$ and $f \in C_{\mathrm{loc}}^{\gamma}((0,T])$, there exists a constant $C > 0$ such that $|f(t_*-u)| \le C u^\gamma$ for small $u$. Substituting this bound gives
\begin{equation*}
    \lim_{h \searrow 0}\frac{D(t_*) - D(t_*-h)}{h} \le \lim_{h \searrow 0}\frac{C}{h} \int_0^h L(u)u^\gamma\, \mathrm{d}u = 0. \qedhere
\end{equation*}
\end{proof}

Next we continue with comparison principles for fractional equations with nonlinear right-hand sides. We start with the case of strict inequalities. Remark that in the formulation below, due to the strict inequalities, no further assumptions on the nonlinearity are required.

\begin{Theorem}[Strict Comparison Principle] \label{thm:strict}
Let $f_1, f_2 \in S_{L, \gamma}$, and let $R: [0,T] \times \R \longrightarrow \R$ be measurable. Suppose that for $t \in (0,T]$
\begin{align}\label{eq: appendix 2}
    (D_L f_1)(t) + R(t, f_1(t)) < (D_L f_2)(t) + R(t, f_2(t)), 
\end{align}
and the initial conditions satisfy $\liminf_{t \to 0^+} \big( f_2(t) - f_1(t) \big) > 0$. Then 
\[
    f_1(t) < f_2(t), \qquad t \in (0, T].
\]
\end{Theorem}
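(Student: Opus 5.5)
The plan is to argue by contradiction using the Extremum Principle (Theorem~\ref{thm:extremum}) applied to the difference $g := f_2 - f_1$. Since $S_{L,\gamma}$ is a linear space and $D_L$ is linear, $g \in S_{L,\gamma}$ and $D_L g = D_L f_2 - D_L f_1$. The hypothesis $\liminf_{t\to 0^+} g(t) > 0$ provides some $\delta > 0$ with $g > 0$ on $(0,\delta)$.

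Suppose the conclusion fails. Then the set $\{t \in (0,T] : g(t) \le 0\}$ is nonempty. Let $t_*$ be its infimum. By the choice of $\delta$ we have $t_* \ge \delta > 0$. Since $g$ is continuous on $(0,T]$ (it is locally H\"older there), we get $g(t_*) = 0$ and $g > 0$ on $(0,t_*)$. Hence $g \ge 0$ on $(0,t_*]$ with $g(t_*)=0$, and also $f_1(t_*) = f_2(t_*)$.

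There is a technical point here: Theorem~\ref{thm:extremum} requires $t_* \in (0,T)$, while $t_*$ could equal $T$. I expect this endpoint case to be the main thing to check. It is harmless, because the proof of the extremum principle only uses the left-hand difference quotient of $D = L\ast g$ at $t_*$. Since $L \ast g \in C^1(0,T]$, the derivative at $T$ is the one-sided left derivative, and the same estimate gives $(D_L g)(T)\le 0$. So in either case $(D_L g)(t_*) \le 0$.

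Now evaluate the strict inequality \eqref{eq: appendix 2} at $t_*$. Because $f_1(t_*) = f_2(t_*)$, the nonlinear terms coincide: $R(t_*, f_1(t_*)) = R(t_*, f_2(t_*))$. The inequality therefore reduces to $(D_L f_1)(t_*) < (D_L f_2)(t_*)$, that is, $(D_L g)(t_*) > 0$. This contradicts the previous paragraph. Hence $g > 0$ on all of $(0,T]$, i.e.\ $f_1 < f_2$ there. Note that no regularity or monotonicity of $R$ is needed, because the nonlinearity cancels exactly at the touching point.
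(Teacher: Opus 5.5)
Your proof is correct and follows the paper's own argument: take the first zero $t_*\in[\delta,T]$ of $f_2-f_1$, apply the Extremum Principle to get $(D_L(f_2-f_1))(t_*)\le 0$, and contradict the strict inequality because the $R$-terms coincide at $t_*$. Your remark on the case $t_*=T$ fills a small gap that the paper leaves implicit. Theorem~\ref{thm:extremum} is stated only for $t_*\in(0,T)$, but its proof uses only the left difference quotient of $L\ast g\in C^1(0,T]$, so it also covers $t_*=T$.
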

\begin{proof}
Let $f(t) = f_2(t) - f_1(t)$. The asymptotic condition guarantees there is some $\delta > 0$ such that $f(t) > 0$ on $(0, \delta)$. Assume by contradiction that the assertion is not true. Since $f_1, f_2$ are continuous on $(0,T]$, there exists a first crossing point $t_* \in [\delta, T]$ such that
\begin{equation*}
    f(t) > 0 \text{ for } t \in (0, t_*), \quad \text{and} \quad f(t_*) = 0.
\end{equation*}
Applying the Extremum Principle (Lemma \ref{thm:extremum}) to $f(t)$ at $t_*$ gives
$(D_L f)(t_*) \le 0$, which by linearity of the generalised fractional derivative gives $(D_L f_2)(t_*) \le (D_L f_1)(t_*)$. 
Since $f_1(t_*) = f_2(t_*)$, we have $R(t_*, f_1(t_*)) = R(t_*, f_2(t_*))$. Hence we obtain 
\begin{equation*}
    (D_L f_2)(t_*) + R(t_*, f_2(t_*)) 
    \le (D_L f_1)(t_*) + R(t_*, f_1(t_*)),
\end{equation*}
which contradicts assumption \eqref{eq: appendix 2}. Thus, no crossing point can exist, and $f_1(t) < f_2(t)$ holds globally.
\end{proof}

Finally, we relax the strict differential inequality. Because $R(t, y)$ is assumed to be nondecreasing, we do not require any Lipschitz conditions to prove the extension.

\begin{Theorem}[Nonstrict Comparison Principle] \label{thm:nonstrict}
Let $f_1, f_2 \in S_{L, \gamma}$, and $R: [0,T] \times \R \longrightarrow \R$ be jointly measurable and nondecreasing in $y \in \R$ with $t \in (0,T]$ fixed. Assume that
\[
    (D_L f_1)(t) + R(t, f_1(t)) \le (D_L f_2)(t) + R(t, f_2(t)), \qquad t \in (0,T]
\]
holds, and the initial conditions satisfy $\liminf_{t \to 0^+} \big( f_2(t) - f_1(t) \big) \ge 0$. Then
\[
    f_1(t) \le f_2(t), \qquad t \in (0,T].
\]
\end{Theorem}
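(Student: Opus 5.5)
We reduce the nonstrict comparison principle to the strict one, Theorem \ref{thm:strict}, by a perturbation argument. Fix $\varepsilon > 0$ and set $f_2^\varepsilon := f_2 + \varepsilon$. First we check that $f_2^\varepsilon \in S_{L,\gamma}$. Local Hölder continuity and integrability are clearly preserved. Moreover, $L \ast \varepsilon = \varepsilon \int_0^t L(u)\,\mathrm{d}u$ is $C^1$ on $(0,T]$ with derivative $\varepsilon L(t)$, because $L \in C^1(0,T]$ and $L \in L^1(0,T)$. Consequently,
\[
    (D_L f_2^\varepsilon)(t) = (D_L f_2)(t) + \varepsilon L(t), \qquad t \in (0,T].
\]
Since $L(t) > 0$ on $(0,T]$, this shift adds a strictly positive term to the derivative.

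Next we verify the strict differential inequality for the pair $(f_1, f_2^\varepsilon)$. Because $R(t,\cdot)$ is nondecreasing, we have $R(t, f_2(t)+\varepsilon) \geq R(t, f_2(t))$. Combining this with the assumed nonstrict inequality gives
\[
    (D_L f_1)(t) + R(t,f_1(t)) \le (D_L f_2)(t) + R(t,f_2(t)) < (D_L f_2^\varepsilon)(t) + R(t, f_2^\varepsilon(t)), \qquad t\in(0,T].
\]
The strict inequality holds because the middle and right expressions differ by $\varepsilon L(t) + [R(t,f_2+\varepsilon) - R(t,f_2)] \geq \varepsilon L(t) > 0$. For the initial condition, we have $\liminf_{t\to0^+}(f_2^\varepsilon - f_1)(t) \ge \varepsilon > 0$. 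Theorem \ref{thm:strict} therefore applies and yields $f_1(t) < f_2(t) + \varepsilon$ for all $t \in (0,T]$.

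Finally, we let $\varepsilon \searrow 0$, which gives $f_1 \le f_2$ pointwise on $(0,T]$. The only point needing care is that the perturbation must generate strictness at every $t$, including near $t=0$ and at $t=T$. This is exactly why we perturb by a constant: its generalized derivative is $\varepsilon L(t)$, which is strictly positive throughout $(0,T]$, and monotonicity of $R$ ensures the nonlinear term cannot spoil the inequality. No Lipschitz property of $R$ is used. A perturbation such as $\varepsilon t$ would be worse, since its derivative $\varepsilon (L\ast 1)(t)$ is also positive but the argument is cleaner with a constant. I expect no real obstacle beyond confirming that $f_2 + \varepsilon \in S_{L,\gamma}$, which follows from the stated regularity of $L$.
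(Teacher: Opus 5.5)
Your proposal is correct and is essentially the paper's argument: perturb to $f_2+\varepsilon$, use $(D_L(f_2+\varepsilon))(t) = (D_L f_2)(t) + \varepsilon L(t)$ with $L>0$ and the monotonicity of $R$ to get the strict inequality, apply Theorem~\ref{thm:strict}, and let $\varepsilon \searrow 0$. Your explicit check that $f_2+\varepsilon \in S_{L,\gamma}$ (via continuity of $L$ on $(0,T]$ and $L\in L^1(0,T)$) fills in a step the paper only asserts.
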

\begin{proof}
Let $\varepsilon > 0$. Define the supersolution $f^{\varepsilon}_2(t) = f_2(t) + \varepsilon$, which again belongs to $S_{L,\gamma}$. Its fractional derivative satisfies $(D_L f^{\varepsilon}_2)(t) = (D_L f_2)(t) + \varepsilon L(t)$. Because $R$ is nondecreasing and $f^{\varepsilon}_2(t) > f_2(t)$, we have $R(t, f^{\varepsilon}_2(t)) \geq R(t, f_2(t))$. Evaluating the operator on $f^{\varepsilon}_2$ gives
\begin{align*}
    (D_L f^{\varepsilon}_2)(t) + R(t, f^{\varepsilon}_2(t)) 
    &= (D_L f_2)(t) + \varepsilon L(t) + R(t, f^{\varepsilon}_2(t)) 
    \\ &\ge (D_L f_2)(t) + \varepsilon L(t) + R(t, f_2(t)) 
    \\ &\ge (D_L f_1)(t) + \varepsilon L(t) + R(t, f_1(t)) 
    \\ &> (D_L f_1)(t) + R(t, f_1(t)),
\end{align*}
where the last inequality follows from $\varepsilon L > 0$ by assumption. Hence $f^{\varepsilon}_2$ satisfies the strict differential inequality, and $\liminf_{t \to 0^+} (f^{\varepsilon}_2(t) - f_1(t)) \ge \varepsilon > 0$. By Theorem \ref{thm:strict}, $f_1(t) < f_2(t) + \varepsilon$ for all $t \in (0,T]$. Taking the limit as $\varepsilon \to 0^+$ gives $f_1(t) \le f_2(t)$.
\end{proof}

One cannot omit the assumption that $y \mapsto R(t,y)$ is nondecreasing. To see this, consider the half-order Riemann-Liouville fractional derivative with kernel $L(t) = \frac{t^{-1/2}}{\Gamma(1/2)} = \frac{1}{\sqrt{\pi t}}$. Define the strictly \textit{decreasing} nonlinearity $R(t,y) = - \frac{2}{\sqrt{\pi}} \sqrt{\max\{0,y\}}$. Let $f_1(t) = t$ and $f_2(t) = 0$, which both belong to $S_{L, 1}$ and satisfy the identical initial condition $\liminf_{t \to 0^+} (f_2(t) - f_1(t)) = 0$. Evaluating the generalized fractional derivative for $f_1$, we find 
\[
        (D_L f_1)(t) = \frac{\mathrm{d}}{\mathrm{d}t} \int_0^t \frac{1}{\sqrt{\pi(t-s)}} s \, \mathrm{d}s = \frac{\mathrm{d}}{\mathrm{d}t} \left( \frac{4}{3\sqrt{\pi}} t^{3/2} \right) = \frac{2}{\sqrt{\pi}} t^{1/2}.
\]
Consequently, by definition of $R$, we find $(D_L f_1)(t) + R(t, f_1(t)) = 0$. For the trivial solution $f_2 \equiv 0$, we clearly have $(D_L f_2)(t) + R(t, f_2(t)) = 0$. Thus, the differential inequality 
\[
        (D_L f_1)(t) + R(t, f_1(t)) \le (D_L f_2)(t) + R(t, f_2(t))
\]
is satisfied (with equality) for all $t \in (0,T]$. All assumptions of Theorem \ref{thm:nonstrict} are satisfied except for the monotonicity of $R$. However, the comparison principle fails since $f_1(t) > f_2(t)$ on $(0,T]$.

\section{Characterisation of subcriticality}

\begin{Lemma}\label{lemma: subcriticality}
    Let $K \in L_{\mathrm{loc}}^2(\R_+)$ be completely monotone, not identically zero, and let $L$ be its resolvent of the first kind given by \eqref{eq: resolvent first kind}. For $\beta \in \R$, let $0 \leq E_{\beta} \in L_{\mathrm{loc}}^2(\R_+)$ be given by \eqref{eq: Ebeta definition}. Then $E_{\beta} \in L^1(\R_+)$ holds if and only if $\beta < L_0(\infty)$. In such a case we find 
    \[
            \int_0^{\infty}E_{\beta}(t)\, \mathrm{d}t \leq \frac{1}{L_0(\infty) - \beta}.
    \]
\end{Lemma}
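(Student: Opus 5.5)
Laplace transforms make this clean, since all objects are nonnegative. Write $\widehat{f}(z)=\int_0^\infty e^{-zt}f(t)\,\mathrm{d}t$ for $z>0$. From \eqref{eq: Ebeta definition}, $\widehat{E}_\beta(z)=\widehat K(z)+\beta\widehat K(z)\widehat E_\beta(z)$ wherever these transforms are finite. The resolvent identity $\widehat L(z)\widehat K(z)=1/z$ gives $\widehat K(z)=1/(z\widehat L(z))$, and hence
\[
\widehat E_\beta(z)=\frac{\widehat K(z)}{1-\beta\widehat K(z)}=\frac{1}{z\widehat L(z)-\beta}
=\frac{1}{zK(0)^{-1}+z\widehat L_0(z)-\beta}.
\]

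First I would check that this formula is legitimate for $z$ large. Since $E_\beta\ge0$ is locally $L^2$ and is the resolvent of a locally integrable kernel, it has at most exponential growth, so $\widehat E_\beta(z)<\infty$ for large $z$. The formula then holds there, and by analyticity and positivity it extends to every $z>0$ at which the denominator stays positive.

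Next I study the denominator $g(z)=zK(0)^{-1}+z\widehat L_0(z)$. Since $L_0$ is completely monotone, $z\widehat L_0(z)=L_0(\infty)+\int_0^\infty e^{-zt}\,|{\rm d}L_0|(t)$, which is nondecreasing in $z$. Therefore $g$ is nondecreasing, and $g(z)\to L_0(\infty)$ as $z\searrow0$ by the final value argument of Lemma~\ref{lemma: first kind asymptotics}(d).

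Case $\beta<L_0(\infty)$. Here $g(z)-\beta\ge L_0(\infty)-\beta>0$ for all $z>0$, so $\widehat E_\beta(z)$ is finite for every $z>0$. Indeed, the set of $z$ with $\widehat E_\beta(z)<\infty$ is an interval $(z_0,\infty)$, and the transform can blow up only where the denominator vanishes. By Landau's theorem for Laplace transforms of nonnegative functions, the abscissa $z_0$ is a singularity, so $z_0\le 0$. Monotone convergence then gives
\[
\int_0^\infty E_\beta\,\mathrm{d}t=\lim_{z\searrow0}\widehat E_\beta(z)=\frac{1}{L_0(\infty)-\beta}.
\]
I expect the main obstacle to be justifying analytic continuation up to $z=0$, which is exactly what Landau's theorem provides.

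Alternatively, one can avoid Laplace transforms. Convolve the equation $E_\beta=K+\beta K\ast E_\beta$ with $L$ to get $L\ast E_\beta=1+\beta\int_0^t E_\beta$. Since $L_0$ is nonincreasing, $(L\ast E_\beta)(t)\ge L_0(t)\int_0^tE_\beta\ge L_0(\infty)\int_0^tE_\beta$. Hence $(L_0(\infty)-\beta)\int_0^tE_\beta\le1$, which gives both integrability and the bound directly. I would adopt this as the main argument, because it mirrors the proof of Theorem~\ref{Theorem: lower bound}(c) and yields the inequality exactly as stated.

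Case $\beta\ge L_0(\infty)$. Suppose $E_\beta\in L^1$. Then $\widehat E_\beta$ is finite and continuous on $[0,\infty)$, and the identity $\widehat E_\beta(z)\,(g(z)-\beta)=1$ would give, as $z\searrow0$, $\widehat E_\beta(0)\,(L_0(\infty)-\beta)=1$. This is impossible when $L_0(\infty)-\beta\le0$ and $\widehat E_\beta(0)\ge0$.

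In the time domain, the same contradiction reads as follows. Let $t\to\infty$ in $L\ast E_\beta=1+\beta\int_0^tE_\beta$. The left side tends to $L_0(\infty)\int_0^\infty E_\beta$, using $E_\beta\to0$ along with dominated convergence, as in the proof of Theorem~\ref{thm: limit_distribution}. This gives $(L_0(\infty)-\beta)\int_0^\infty E_\beta=1$, which contradicts $\beta\ge L_0(\infty)$. The delicate point here is the limit $(L\ast E_\beta)(t)\to L_0(\infty)\|E_\beta\|_1$, in particular the term $K(0)^{-1}E_\beta(t)\to0$. To avoid handling it directly, I would run this step through the Laplace-transform version.
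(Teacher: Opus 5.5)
Your proof is correct, but only the first half follows the paper's route.

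\textbf{Case $\beta<L_0(\infty)$.} Your time-domain step is exactly the paper's key inequality $(L_0(\infty)-\beta)\int_0^tE_\beta(s)\,\mathrm{d}s\le 1$, obtained by convolving with $L$. The difference is that you take $E_\beta\ge0$ from the hypothesis. The paper instead first proves that $E_\beta$ is completely monotone and strictly positive, using the resolvent of the first kind of $L_0-\beta$ when $K(0)=\infty$, or the resolvent of the second kind of $K(0)(L_0-\beta)$ when $K(0)<\infty$. From this it also deduces $E_\beta\in L^2(\R_+)$. These extra properties are not part of the lemma, but the introduction relies on them to invoke the limit-distribution results. Your shorter argument does not deliver them.

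\textbf{Case $\beta\ge L_0(\infty)$.} The paper stays in the time domain and lets $t\to\infty$ in $K(0)^{-1}E_\beta(t)+\int_0^t(L_0(t-s)-\beta)E_\beta(s)\,\mathrm{d}s=1$, with a case split on whether $K(0)$ is finite. This needs precisely the pointwise limit you flagged as delicate: the term $K(0)^{-1}E_\beta(t)$, and the possibly singular part of $L_0$ near the origin. The paper only sketches that step.

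Your Laplace-transform route avoids it. If $E_\beta\in L^1(\R_+)$, Tonelli and $\widehat K(z)\widehat L(z)=1/z$ give $\widehat E_\beta(z)\,(z\widehat L(z)-\beta)=1$ for all $z>0$. Monotone convergence together with $z\widehat L_0(z)\to L_0(\infty)$ then yields $(L_0(\infty)-\beta)\int_0^\infty E_\beta(t)\,\mathrm{d}t=1$. This is impossible for $\beta\ge L_0(\infty)$, and no case distinction is needed.

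Applied when $\beta<L_0(\infty)$, the same identity shows that the stated bound is in fact an equality. It also makes your Landau-theorem detour unnecessary.

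One minor slip: for completely monotone $L_0$ the correct formula is $z\widehat L_0(z)=L_0(\infty)+\int_{(0,\infty)}(1-\mathrm{e}^{-zt})\,|\mathrm{d}L_0|(t)$, not the one with $\mathrm{e}^{-zt}$. All you actually use is $z\widehat L_0(z)\ge L_0(\infty)$, which follows directly from $L_0\ge L_0(\infty)$.
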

\begin{proof} 
 Convolving the equation for $E_{\beta}$ with $L$ gives
 \[
    K(0)^{-1}E_{\beta}(t) + \int_0^t L_0(t-s)E_{\beta}(s)\, \mathrm{d}s = (L \ast E_{\beta})(t) = 1 + \beta \int_0^t E_{\beta}(s)\, \mathrm{d}s.
 \]
 Set $H(t) = L_0(t) - \beta$. Then $E_{\beta}$ satisfies
 \begin{align}\label{eq: 6}
    K(0)^{-1}E_{\beta}(t) + \int_0^t H(t-s)E_{\beta}(s)\, \mathrm{d}s = 1.
 \end{align}

 Suppose that $\beta < L_0(\infty)$. Let us first show that $E_{\beta} > 0$ is completely monotone. By assumption, $H(t) \geq L_0(\infty) - \beta > 0$ is completely monotone. If $K(0) = \infty$, then $\int_0^t H(t-s)E_{\beta}(s)\, \mathrm{d}s = 1$, and hence $E_{\beta}$ is the function resolvent of the first kind for $H$. Since $H$ is completley monotone, also $E_{\beta}$ must be completely monotone and hence nonnegative. Since it is not constant (as $K$ is not constant), it is strictly positive. Assume now that $K(0) < \infty$. Let $R_{\beta} \in L_{\mathrm{loc}}^2(\R_+)$ be the resolvent of the second kind for $K(0)H$, i.e. the unique solution of $R_{\beta}(t) + K(0)\int_0^t H(t-s)R_{\beta}(s)\, \mathrm{d}s = K(0)H(t)$. Then $R_{\beta}$ is completely monotone, and according to the solution theory for linear Volterra equations, $E_{\beta}$ is given by
 \[
    E_{\beta}(t) = K(0)\left( 1 - \int_0^t R_{\beta}(s)\, \mathrm{d}s\right).
 \]
 Using this formula, we can prove that $E_{\beta}$ is completely monotone and hence strictly positive. 

 Next we prove the asserted integrability for $E_{\beta}$. Since $E_{\beta}$ is strictly positive, we obtain from \eqref{eq: 6}
 \[
    (L_0(\infty) - \beta)\int_0^t E_{\beta}(s)\, \mathrm{d}s \leq \int_0^t H(t-s)E_{\beta}(s)\, \mathrm{d}s = 1 - K(0)^{-1}E_{\beta}(t) \leq 1.
 \]
 This proves that 
 \[
    \int_0^{\infty} E_{\beta}(t)\, \mathrm{d}t \leq \frac{1}{L_0(\infty) - \beta}.
 \]
 Since $E_{\beta}$ is completely monotone, it satisies $E_{\beta}(t) \leq E_{\beta}(1)$ for $t \geq 1$, and hence 
 \[
    \int_{0}^{\infty}E_{\beta}(t)^2\, \mathrm{d}t \leq \int_0^1 E_{\beta}(t)^2\, \mathrm{d}t + E_{\beta}(1)\int_1^{\infty}E_{\beta}(t)\, \mathrm{d}t < \infty.
 \]

 Finally, suppose that $\beta \geq L_0(\infty)$. Assume by contradiction that $E_{\beta}$ would be integrable. Set $M = \int_0^{\infty} E_{\beta}(t)\, \mathrm{d}t \in (0,\infty)$, and recall that $H(t) = L_0(t) - \beta \searrow L_0(\infty) - \beta$. If $K(0) = \infty$, taking the limit $t \to \infty$ in \eqref{eq: 6} shows that $(L_0(\infty) - \beta)M = 1$, which is a contradiction to $L_0(\infty) - \beta \leq 0$. Hence $E_{\beta} \not \in L^1(\R_+)$. If $K(0) < \infty$, then isolating first $E_{\beta}(t)$ in \eqref{eq: 6} and then taking the limit $t \to \infty$, shows that the limit $E_{\beta}(t) \longrightarrow E_{\beta}(\infty)$ exists and is given by 
 \[
    E_{\beta}(\infty) = K(0) - K(0)(L_0(\infty) - \beta) = K(0)(\beta + 1 - L_0(\infty)) \geq K(0) > 0.
 \]
 Hence $E_{\beta}$ cannot be integrable, which proves the assertion.
\end{proof}

\section*{Acknowledgement}
Martin Friesen would like to thank Peng Jin for inspiring discussions on the boundary attainment problem for the rough square-root process. The authors would like to thank Paul Gassiat for pointing out an error in Theorem~\ref{Theorem: lower bound} in the first version of the manuscript. Inequality~\eqref{eq: PaulGassiat} was proposed by Paul Gassiat for the fractional case.

%%%%%%%%% Literature

\bibliographystyle{plainurl}
\bibliography{literature}

\end{document}